\documentclass[11pt,a4paper,leqno]{amsart}
\usepackage{amsmath,stmaryrd}
\usepackage{amssymb}
\usepackage{amsthm}
\usepackage{epsf}
\usepackage{graphicx}
\usepackage{esint} 
\usepackage{dsfont}
\usepackage[pagebackref]{hyperref} 
\hypersetup{pdfpagemode=FullScreen,  colorlinks=true} 
\usepackage{enumerate} 
\usepackage{xcolor,bbm,easybmat,bm}
\usepackage{tikz-cd}
\usepackage{centernot}
\usepackage{comment}

\numberwithin{equation}{section}

\newcommand\footnoteref[1]{\protected@xdef\@thefnmark{\ref{#1}}\@footnotemark}
\makeatother

\newcommand{\vp}{\varphi}
\newcommand{\dr}{\partial}

\DeclareMathOperator{\Span}{Span}
\DeclareMathOperator{\diver}{div}
\DeclareMathOperator{\sgn}{sgn}
\DeclareMathOperator{\dist}{dist}

\DeclareMathOperator{\diam}{diam}

\DeclareMathOperator{\Tr}{Tr}

\DeclareMathOperator{\CMs}{CM_{sup}}
\DeclareMathOperator{\CM}{CM}

\DeclareMathOperator{\Cone}{Cone}

\newcommand{\1}{{\mathds 1}}

\newcommand{\ms}{\medskip}

\newcommand{\R}{\mathbb R}
\newcommand{\bN}{\mathbb N}

\newcommand{\bp}{\noindent {\em Proof: }}
\newcommand{\ep}{\hfill $\square$ \medskip}

\newcommand{\wt}{\widetilde}

\newcommand{\Z}{\mathcal Z}

\newcommand{\Rn}{\mathbb R^n}
\newcommand{\norm}[1]{\left\Vert#1\right\Vert}
\newcommand{\abs}[1]{\left\vert#1\right\vert}
\newcommand{\br}[1]{\left(#1\right)}
\newcommand{\set}[1]{\left\{#1\right\}}

\newcommand{\om}{\Omega}
\newcommand{\pom}{\partial\Omega}

\newcommand{\dint}{\int\!\!\!\!\!\int}
\def\Yint#1{\mathchoice
	{\YYint\displaystyle\textstyle{#1}}%
	{\YYint\textstyle\scriptstyle{#1}}%
	{\YYint\scriptstyle\scriptscriptstyle{#1}}%
	{\YYint\scriptscriptstyle\scriptscriptstyle{#1}}%
	\!\dint}
\def\YYint#1#2#3{{\setbox0=\hbox{$#1{#2#3}{\iint}$}
		\vcenter{\hbox{$#2#3$}}\kern-.51\wd0}}
\def\longdash{\mkern-1.5mu{-}\mkern-7.5mu{-}} 

\def\fiint{\Yint\longdash}

\theoremstyle{plain}
\newtheorem{theorem}[equation]{Theorem}
\newtheorem{lemma}[equation]{Lemma}
\newtheorem{corollary}[equation]{Corollary}
\newtheorem{proposition}[equation]{Proposition}

\newtheorem{question}[equation]{Question}
\newtheorem{definition}[equation]{Definition}

\theoremstyle{definition}

\theoremstyle{remark}
\newtheorem{remark}[equation]{Remark}

\begin{document}
\title[Stability of $L^p$ Dirichlet problem]{Stability of $L^p$ Dirichlet problem under small bi-Lipschitz transformations of domains}

\author[Feneuil]{Joseph Feneuil}
\address{Joseph Feneuil. Laboratoire de math\'ematiques d'Orsay, Universit\'e Paris-Saclay, France}
\email{joseph.feneuil@universite-paris-saclay.fr}

\author[Li]{Linhan Li} 
\address{Linhan Li. School of Mathematics, The University of Edinburgh, Edinburgh, UK }
\email{linhan.li@ed.ac.uk}

\author[Zhuge]{Jinping Zhuge}
\address{Jinping Zhuge. Morningside Center of Mathematics, Academy of Mathematics and systems science, Chinese Academy of Sciences, Beijing 100190, China}
\email{jpzhuge@amss.ac.cn}
\thanks{J. Zhuge is partially supported by NNSF of China (No. 12494541, 12288201, 12471115).
}

\maketitle

\begin{abstract} 
We show that small bi-Lipschitz deformations of a Lipschitz domain (with possibly large Lipschitz constant) preserve the solvability of the Dirichlet problem for the Laplacian with boundary data in $L^p$, for the same value of $p>1$. As a consequence, for {\it all} $p\in(1,\infty)$, we obtain the solvability of the $L^p$ Dirichlet problem for small Lipschitz perturbations of convex domains, thereby unifying two fundamentally different settings in which such results were previously known: convex and $C^1$ domains. The key ingredient and novelty of our approach is a  construction of a change of variables based on a {\it non-constant} basis derived from the Green function, which encodes the geometry of the base domain.
\end{abstract}

\ms\noindent{\bf Keywords: } $L^p$ Dirichlet problem, Laplacian, bi-Lipschitz change of variables, perturbations, Green function, Lipschitz domains, convex domains, quasiconvex domains.

\ms\noindent
MSC2020 classification: Primary 31J05; Secondary 42B37, 35J25, 35J08.

\tableofcontents

\section{Introduction}

\subsection{Motivation and statement of main results} 
We consider the Dirichlet problem for the Laplacian in a domain $\om\subset\Rn$. 
The solvability of the Dirichlet problem with boundary data in $L^p$---or $(D)_p$ for simplicity, see Definition~\ref{def.Dp}---is closely tied to the geometry of the underlying domain and its boundary. A substantial body of recent work (see e.g. \cite{HM14,HMU14,AHMMT20}) has culminated in identifying the necessary and sufficient geometric conditions for solving the $L^p$ Dirichlet problem for {\it some} (large) $p>1$. Roughly speaking, the right geometric condition is some weak connectivity of the domain in addition to uniform rectifiability of the boundary of the domain, the latter being a strictly more general notion than a union of Lipschitz graphs.  
In contrast, the right geometric notion for the solvability of the $L^p$ Dirichlet problem for {\it all} $p\in (1,\infty)$ is far from clear. It remains an open question:
\begin{question}\label{Q1}
    What are the necessary and sufficient conditions on the domain for solving the $L^p$ Dirichlet problem $(D)_p$ for all $p>1$ ?
\end{question}

Noticeably, there are two fundamentally different types of domains on which the $L^p$ Dirichlet problem for the Laplacian is solvable for all $p\in (1,\infty)$: convex domains, and domains with ``very flat'' boundaries. Let us be more precise. 

For convex domains, as one has a pointwise bound on the gradient of the Green function up to the boundary, one can obtain $(D)_p$ for the full range of $p$ using the Hardy-Littlewood maximal function (this is observed in \cite{Shen06}). Combining this idea with Schauder theory, we can extend the result slightly beyond convex domains, namely, to domains whose boundary is (locally) the image of a convex graph under a $C^{1,\alpha}$ diffeomorphism (see Appendix~\ref{S.App}). However, since the argument relies heavily on boundary $C^1$ estimates of solutions, reducing the regularity of the $C^{1,\alpha}$ diffeomorphism to only $C^1$ is beyond reach of this method.

On the other hand, it is known that for any $p>1$, the $L^p$ Dirichlet problem is solvable in Lipschitz domains with sufficiently small Lipschitz constant, and in particular, in bounded $C^1$ domains. This follows from the result of \cite{JK82vmo}, which shows that on $C^1$ domains, the logarithm of the Poisson kernel ($\log k$) has vanishing mean oscillation (VMO) on the boundary. Alternatively, one can deduce this result from \cite{DPP07}, which treats a more general class of elliptic operators.  Also in this direction, \cite{KT97} generalizes the result to a broader class of domains whose boundaries are not necessarily given (locally) by graphs of functions. Specifically, Kenig and Toro showed that on {\it vanishing chord arc domains}---which can be viewed as a generalization of Lipschitz graph domains with vanishing Lipschitz constant---the logarithm of the Poisson kernel is in $\rm{VMO}(\pom)$, which implies that the $L^p$ Dirichlet problem for the Laplacian is solvable for all $p\in(1,\infty)$. Remarkably, there is also a converse to this result (\cite{KT03}): assuming that the domain is a chord-arc domain with sufficiently small constant, if $\log k\in \rm{VMO}(\pom)$, then the domain is a vanishing chord arc domain.
\smallskip

We can see that ``flatness'' of the boundary---such as the vanishing chord-arc condition---{\it cannot} be the right geometric criterion characterizing the solvability of $(D)_p$ for all $p>1$.
Indeed, convex domains can be Lipschitz domains with very large Lipschitz constants, and since the Lipschitz constant is invariant under dilation, the boundaries of such domains can remain far from flat at every scale---yet the $L^p$ Dirichlet problem is solvable for all $p>1$.  In particular, this means that neither convex nor vanishing chord-arc is a necessary condition for solving the $L^p$ Dirichlet problem for all $p>1$. The first step to understand the right geometric condition would thus require unifying these two conditions.

\smallskip

One of our contributions in this paper is 
identifying a class of domains which includes both convex domains and Lipschitz domains with vanishing Lipschitz constants, and on which the $L^p$ Dirichlet problem is solvable for all $p>1$ (see Corollary~\ref{MainCor}). Notably, this class of domains is invariant under any $C^1$ diffeomorphism. We postpone the precise definition and statement until the end of this section. 

Our approach is to study the following question, which can be viewed as the first step toward understanding Question~\ref{Q1}.
\begin{question}\label{Q2}
    Let $p\in(1,\infty)$. What kinds of domain perturbations preserve $(D)_p$ for the same value of $p$?
\end{question}
While there are many perturbation results for {\it operators} that preserve the solvability of the $L^p$ Dirichlet problem---see e.g. \cite{Dah86,Esc96,DPP07,MPT13,MPT14small,CHM,AHMT23,AAAHK11,BHLMP24}, see also perturbation results for operators that do not preserve the same value of $p$ \cite{FKP91,CHMT20,DSU,FP23,Hofdrift}---to our knowledge there has been no perturbation theory concerning {\it domains}, or at least, it has not usually been viewed in this way. On the other hand, one can apply certain perturbation results for operators to obtain corresponding perturbation results for domains, which is likely one of the original motivations for studying perturbation theory for operators. However, even when restricted to small Lipschitz perturbations in the transversal direction of a Lipschitz graph domain, existing theory suffers from one of the following 3 limitations: 
\begin{itemize}
    \item it fails to preserve the same value of 
$p$,
\item it does not yield results in the range $1<p<2$, 
or
\item it requires the base domain to have a sufficiently flat boundary (such as Lipschitz domain with small constant).
\end{itemize} 
We refer the reader to Section~\ref{S.old} for a detailed discussion on this.
\medskip

Our main result gives an answer to Question~\ref{Q2}, overcomes all the limitations of previous methods, and includes a substantially broader class of perturbations. In fact, our result applies to any bi-Lipschitz transformation sufficiently close to the identity. Moreover, we show that these transformations of domains preserve $(D)_p$ for the same value of $p$ for any $p\in(1,\infty)$, and we do not require any smallness condition on the Lipschitz constant of the base domain.
Specifically, our main theorem is the following.
\begin{theorem} \label{MainTh}
Let $\Omega_0\subset\Rn$ be a Lipschitz graph domain, that is, there exists $M>0$ and a $M$-Lipschitz function $g:\R^{n-1}$ such that \[\Omega_0:= \{ (x,t) \in \R^{n-1}\times\R,\, t> g(x)\}.\]
Let $p\in (1,\infty)$ be such that the $L^p$ Dirichlet problem  for the Laplacian is solvable in $\Omega_0$.

There exists $\epsilon_0>0$ such that if $\Phi :\, \R^n \to \R^n$ is a bi-Lipschitz map whose Jacobian matrix $\nabla\Phi$ satisfies $\|\nabla\Phi - I\|_\infty \leq \epsilon_0$, then the $L^p$ Dirichlet problem for the Laplacian is solvable in $\om = \Phi(\Omega_0)$. 

Moreover, $\epsilon_0 = \epsilon_0(n,p,C_p,M)$ depends only on the dimension $n$, the parameter $p$, the reverse H\"older constant $C_p$ in \eqref{defRH}, and the Lipschitz constant $M$.
\end{theorem}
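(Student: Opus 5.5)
Pulling back by $\Phi$ turns the Laplacian on $\Omega=\Phi(\Omega_0)$ into a divergence-form operator $L_A=-\diver A\nabla$ on $\Omega_0$, with
\[
A=|\det\nabla\Phi|\,(\nabla\Phi)^{-1}(\nabla\Phi)^{-T},\qquad \|A-I\|_\infty\lesssim\epsilon_0 .
\]
Solvability of $(D)_p$ is preserved by bi-Lipschitz changes of variables, because nontangential cones and surface measures are comparable. So it suffices to show that $(D)_p$ for $L_A$ on $\Omega_0$ follows from $(D)_p$ for $\Delta$ on $\Omega_0$.

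The difficulty is that $A-I$ is small only in $L^\infty$, not in Carleson norm. Known operator-perturbation results therefore either lose $p$ or require a flat base. The abstract hints at the remedy: make a second change of variables so that the resulting perturbation becomes small in a Carleson sense relative to the geometry of $\Omega_0$.

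\textbf{Step 1 (a Green-function frame).} Let $G$ be the Green function of $\Delta$ in $\Omega_0$ with pole at infinity; for a graph domain this can be taken as the harmonic function vanishing on $\partial\Omega_0$, normalized at a fixed point. In a Lipschitz graph domain, $\partial_t G\approx G/\delta$, where $\delta$ is the distance to the boundary, so $\nabla G$ is nondegenerate. I would build a non-constant basis adapted to $G$: the direction $\nabla G/|\nabla G|$ together with an orthonormal complement. This basis varies with the geometry of $\Omega_0$ and does not depend on smallness of $M$.

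\textbf{Step 2 (second change of variables).} Construct a bi-Lipschitz map $\Psi:\Omega_0\to\Omega_0$, close to the identity, fixing the boundary. Choose it so that the composite operator $\tilde L=-\diver\tilde A\nabla$ satisfies the following: $\tilde A$ is close to a matrix $B$ for which $(D)_p$ is known to hold with the same $p$, and the discrepancy $\tilde A-B$ satisfies a Carleson condition
\[
\sup_{\Delta}\frac{1}{\sigma(\Delta)}\iint_{T(\Delta)}\frac{|\tilde A-B|^2}{\delta}\,dX\lesssim \epsilon_0 .
\]
The natural choice is to solve a mollified version of the problem. Replace $\nabla\Phi$ by a Whitney-scale average $\nabla\Phi_\delta$, and define $\Psi$ by integrating along the $G$-frame, so that the oscillating part of $A$ appears only through $\delta|\nabla(\text{average})|$. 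That quantity is Carleson-controlled by the $L^\infty$ bound, via square-function estimates of Littlewood–Paley type for the averaging.

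\textbf{Step 3 (perturbation).} Apply a small-Carleson perturbation result that preserves the exponent, in the style of Dahlberg and Fefferman–Kenig–Pipher. Such a result holds for any fixed $p$ once the Carleson norm is below a threshold depending on $p$, $C_p$, $n$ and $M$. I would prove this through the reverse Hölder property of the Poisson kernel, using square-function versus nontangential-maximal-function estimates for $L$-harmonic functions, together with the $(D)_{p}$ hypothesis for the base operator written through $G$.

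The choice of base $B$ is crucial. It must be essentially $\Delta$ in coordinates adapted to $G$, so that the constant part of the perturbation is absorbed exactly, using identities like $\diver(\nabla G)=0$.

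\textbf{Main obstacle.} Step 2 is where I expect the difficulty. One needs to show that after the change of variables along the Green-function frame, the leftover $L^\infty$-small but non-Carleson part of $A-I$ is exactly annihilated at the level of the equation. Concretely, one wants terms like $\diver((A-I)\nabla G)$ to be handled by the construction, so that only Carleson errors remain. I would first try to choose $\Psi$ so that $G\circ\Psi^{-1}$ is $\tilde L$-harmonic up to Carleson errors. I would then use $G$ as a comparison function in the usual good-$\lambda$ argument for reverse Hölder estimates of $\tilde L$-harmonic measure.
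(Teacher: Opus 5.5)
Your outline has the right architecture: a frame built from $\nabla G$, a change of variables that makes the pulled-back operator a small Carleson perturbation of an operator annihilating $G$, then Theorem~\ref{ThCarlpert} and bi-Lipschitz invariance. But the two steps that carry the content are either wrong as sketched or left open.

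\textbf{The Carleson bound in Step 2 is false.} A solid Whitney-scale average of $\nabla\Phi$ is \emph{not} Carleson-controlled by $\|\nabla\Phi-I\|_\infty$. Take $\Omega_0=\R^n_+$ and $\Phi(x,t)=(x,t(1+\epsilon\sin\log|t|))$. Then $\|\nabla\Phi-I\|_\infty\le 2\epsilon$ and $\Phi(\Omega_0)=\Omega_0$; the paper's $\rho$ is simply the identity here. Yet the average of $\partial_t\Phi_n$ over $B(X,t/2)$ equals $1+c\epsilon\sin(\log t+\theta)$ with $c\neq0$. Hence $\delta|\nabla(\text{average})|=c\epsilon|\cos(\log t+\theta)|$, and $\int_0^r\cos^2(\log t+\theta)\,dt/t=\infty$.

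Littlewood--Paley/Carleson estimates control averages of functions living on the \emph{boundary}. The interior oscillation of $\Phi$ carries no usable information and must be discarded, not averaged. Factoring through $L_A$ is harmless (take $\Psi=\Phi^{-1}\circ\rho$), but it steers you toward exactly the wrong object.

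The paper instead builds $\rho:\Omega_0\to\Omega$ from the boundary displacement $h(z)=\Phi(z,g(z))-(z,g(z))$ alone. It extends $h$ by $\lambda(X)=\int h(z)\,\eta_{t(X)}(y(X)-z)\,dz$, where $t=G/|\nabla G|$ and $y=\pi(X-tv_n)$. Carleson bounds then come from Dorronsoro's theorem for $\beta_h$, transplanted into $\Omega_0$ via $t\approx\delta$ and $\nabla t-v_n^T\in\CMs$ (Lemma~\ref{lemDor}).

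\textbf{The mechanism for absorbing the non-Carleson part is missing.} Even with the right extension, $\nabla\lambda$ is not Carleson-small. Only the normal derivative $\partial_{v_n}\lambda$ is, because $\partial_t\eta_t$ annihilates affine functions. The tangential part $\mathbf B$ is merely $O(\epsilon)$ in $L^\infty$, with $v_n\mathbf B=0$ and $t\nabla\mathbf B\in\CMs$. You rightly say this part must be ``absorbed exactly'', but you give no way to do it. Asking that $G\circ\Psi^{-1}$ be $\tilde L$-harmonic up to Carleson errors essentially restates the goal.

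The paper's device is as follows.
\begin{enumerate}
\item Gram--Schmidt the vectors $v_i(I+\mathbf B)$ into a second frame $\{w_i\}$.
\item Set $\rho=X-tv_n+\lambda+a\,t\,w_n$, with $a=\det(\langle v_i(I+\mathbf B),w_j\rangle)_{i,j<n}$.
\item Then the Jacobian $J_{ij}=\langle\partial_{v_i}\rho,w_j\rangle$ equals $J_0=\mathrm{diag}(A_{||},a)$ up to a $\CMs(C\epsilon)$ error.
\item Since $\det J_0=a^2$, the matrix $A_0=\det(J_0)PJ_0^{-T}J_0^{-1}P^{-1}$ satisfies $A_0\nabla G=\nabla G$ \emph{exactly}.
\end{enumerate}

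Exactness is what makes the last step free. $L_0G=0$ gives $G_{L_0}=G$, so Theorem~\ref{ThDP<=>RH} yields the reverse H\"older bound for $L_0$ with the same $C_p$. No good-$\lambda$ argument is needed, whereas harmonicity only up to Carleson errors would not give this. You would also still have to show that your map is a global bijection onto $\Omega$, as in the paper's Theorem~\ref{Thrhodiffeo}.
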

Some remarks are in order.
\begin{remark}\label{rm.Om-graph}
    Under the assumption of Theorem \ref{MainTh}, the perturbed domain $\om=\Phi(\om_0)$ takes the form of \[\om=\set{(x,t)\in\R^{n-1}\times\R: t>g(x + \psi(x)) + h(x)},\] where $\psi:\R^{n-1}\to\R^{n-1}$ and $h:\R^{n-1}\to\R$  are mappings that satisfy $|\nabla \psi| + |\nabla h| \lesssim \epsilon_0$. We provide a proof of this observation in Appendix~\ref{S.App-rmk}.
 \end{remark}
 \begin{remark}\label{rm.Om-transverLip}
     In view of Remark~\ref{rm.Om-graph}, the class of allowable domain perturbations includes, in particular, small Lipschitz perturbations in the transversal direction. More precisely, our result applies to any perturbation of $\om_0$ of the form
     \begin{equation}\label{eq.om:g+h}
         \om=\set{(x,t)\in\R^{n-1}\times\R: t>g(x)+h(x)},
     \end{equation}
     where $h:\R^{n-1}\to\R$ is any $\epsilon_0$-Lipschitz function. On the other hand, the class of perturbations covered by our theorem is strictly more general than the class described by \eqref{eq.om:g+h}, as the function $g(x + \psi(x)) - g(x)$ may not have small Lipschitz constant even if $\psi$ does.
 \end{remark}
 \begin{remark}
     When restricted to $p \ge 2$, Theorem~\ref{MainTh} can be derived from known arguments and results. In fact, since both $\om_0$ and the perturbed domain $\om$ are Lipschitz domains, the result can be obtained by using the solvability of the $L^p$ Dirichlet problem for the Laplacian in Lipschitz domains for all $p \in [2,\infty)$, which is established in \cite{Dah77} and \cite{Ver84}.  However, none of the available methods apply to the range $1<p<2$---see further discussion in Section~\ref{S.old}. We take a new approach which, by contrast, works uniformly for every $p>1$ and does not distinguish between the cases $p \ge 2$ and $p<2$.
 \end{remark}

\medskip 
Equipped with Theorem~\ref{MainTh}, we can now introduce the class of domains that unifies  convex and $C^1$ domains. Roughly speaking, these domains are locally the image of a convex domain under a bi-Lipschitz map close to the identity.   
Specifically, we define the following.
\begin{definition} \label{defquasiconvex}
We say that a bounded domain $\Omega\subset\Rn$ is {\bf strongly quasiconvex} if there exists $M>0$ such that for any $\epsilon_0>0$, there exists $r_0>0$ such that for any $x_0\in \partial \Omega$, there exists an coordinate basis $\{e_1,\dots,e_n\}$, an $M$-Lipschitz convex function $g_{x_0}$ and a bi-Lipschitz map $\Phi_{x_0}:\, \R^n \to \R^n$ whose Jacobian matrix $\nabla\Phi_{x_0}$ satisfies $\|\nabla\Phi_{x_0} - I\|_\infty \leq \epsilon_0$ such that 
\[\Omega \cap B(x_0,r_0) = \Phi_{x_0}\Big( \{ (x,t) \in \R^{n-1}\times\R,\, t>g_{x_0}(x)\} \Big) \cap B(x_0,r_0).\]
\end{definition}
It is obvious from definition that any convex domain and any Lipschitz domain with vanishing Lipschitz constant (or any bounded $C^1$ domain) are strongly quasiconvex. We refer to this class of domains as  {\it strongly quasiconvex} because the condition resembles that of {\it quasiconvex domains} in the literature (see, for instance, \cite{JLW10,Z20,ZZ23}).
\begin{remark}\label{rm.C1invariant}
     The class of strongly quasiconvex domains is stable under any $C^1$ diffeomorphism (not necessarily with small constants). We give a sketch of the proof of this claim in Appendix~\ref{S.App-rmk}.
     In particular, a $C^1$ diffeomorphism transforms a convex domain into a strongly quasiconvex domain. This suggests that the class of strongly quasiconvex domains is a natural setting to consider.
\end{remark}

By an appropriate localization argument (see Section~\ref{S.pfCor}), one can deduce the our main corollary from Theorem~\ref{MainTh}.
\begin{corollary} \label{MainCor}
Let $\Omega \subset \R^n$ be a  bounded strongly quasiconvex domain. Then the $L^p$ Dirichlet problem for $-\Delta$ in $\Omega$ is solvable for all $p\in (1,\infty)$.
\end{corollary}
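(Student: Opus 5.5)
Fix $p\in(1,\infty)$ and let $\Omega$ be a bounded strongly quasiconvex domain with constant $M$ from Definition~\ref{defquasiconvex}. The plan is to reduce $(D)_p$ in $\Omega$ to a local reverse H\"older (or $A_\infty$-type) estimate for harmonic measure at small scales near each boundary point. We then obtain these local estimates from Theorem~\ref{MainTh} applied to suitable Lipschitz graph domains.

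The first step is the input for Theorem~\ref{MainTh}. For an $M$-Lipschitz convex function $g$, the graph domain $\Omega_0=\{t>g(x)\}$ is convex. By the argument recalled in the introduction (the pointwise gradient bound for the Green function, together with the maximal function), $(D)_p$ holds in $\Omega_0$. This needs to hold with a reverse H\"older constant $C_p$ depending only on $n,p,M$, uniformly over all such $g$. Convexity gives $|\nabla G|\lesssim$ uniformly, and the unbounded graph case follows by approximating with bounded convex domains, or directly from the gradient bound on the Poisson kernel. So Theorem~\ref{MainTh} yields a single $\epsilon_0=\epsilon_0(n,p,M)$. We feed this $\epsilon_0$ into Definition~\ref{defquasiconvex} and obtain a radius $r_0$ such that, for every $x_0\in\partial\Omega$,
\[\Omega\cap B(x_0,r_0)=\widetilde\Omega_{x_0}\cap B(x_0,r_0),\qquad \widetilde\Omega_{x_0}:=\Phi_{x_0}(\{t>g_{x_0}(x)\}).\]
By Theorem~\ref{MainTh}, $(D)_p$ holds in $\widetilde\Omega_{x_0}$, with reverse H\"older constants depending only on $n,p,M$. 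These constants are scale invariant, and the bound comes from the dependence stated in Theorem~\ref{MainTh}.

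The second step, which is the main work, is localization. $\Omega$ is a bounded Lipschitz domain, with Lipschitz character controlled by $M$, $\epsilon_0$ and $r_0$ via Remark~\ref{rm.Om-graph}. For such domains, $(D)_p$ is equivalent to the Poisson kernel $k=d\omega^{X_0}/d\sigma$ satisfying $RH_{p'}$ with respect to $\sigma$ on all surface balls. Since $\partial\Omega$ is compact, it suffices to verify this on balls $\Delta=\Delta(y,r)$ with $r\le c\,r_0$; large balls are handled by a finite covering and Harnack chains. For small $r$, compare $\omega_\Omega$ with $\omega_{\widetilde\Omega_{x_0}}$ on $\Delta(y,r)$, where $x_0=y$. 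Both domains are Lipschitz (hence NTA) and coincide in $B(y,r_0)$. The comparison principle (boundary Harnack) for positive harmonic functions vanishing on the common boundary piece then gives
\[\frac{d\omega_\Omega^{X}}{d\omega_{\widetilde\Omega}^{X}}\approx \text{const}\quad\text{on }\Delta(y,2r),\]
for $X$ a corkscrew point at scale $r$. Alternatively, one can use the characterization of $A_\infty$ / $RH$ via the normalized Green function, which is local. Thus the reverse H\"older inequality transfers from $\widetilde\Omega_{x_0}$ to $\Omega$ at scales $\le c r_0$, using the change of pole formula. I expect the main obstacle to be making this transfer rigorous, with constants depending only on $n,p,M$ and the Lipschitz character. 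In particular, the doubling and change-of-pole estimates must be invoked correctly, and the unbounded domain $\widetilde\Omega_{x_0}$ has pole at infinity, so it should be handled through the Green function normalized at a corkscrew point.

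Finally, combining the small-scale and large-scale estimates gives $k\in RH_{p'}(\partial\Omega)$. Hence $(D)_p$ is solvable in $\Omega$, and since $p$ was arbitrary, for all $p\in(1,\infty)$.
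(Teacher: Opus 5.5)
Your proposal is correct and follows essentially the paper's route: uniform reverse H\"older bounds for convex $M$-Lipschitz graph domains (the paper gets these from the scale-invariant boundary Lipschitz estimate in Appendix~\ref{S.App}, which is what your ``$|\nabla G|\lesssim$ uniformly'' should mean, since $|\nabla G|$ itself need not be bounded for the pole at infinity), then Theorem~\ref{MainTh} on each local model $\Phi_{x_0}(\{t>g_{x_0}(x)\})$ with $\epsilon_0$ fixed before $r_0$, a comparison-principle transfer at scales $\lesssim r_0$, and a covering argument for large balls. For the transfer, the paper uses exactly your alternative route: it normalizes the Green function with pole at infinity of $\Phi_{x_0}(\{t>g_{x_0}(x)\})$ to agree with $G^{X_0}$ at a corkscrew point and applies boundary Harnack to get $\kappa_{x_0}\approx\kappa^{X_0}$ on $\Delta(x_0,2^{-m}r_0)$, which handles all smaller scales at once and avoids your per-scale harmonic-measure comparison and change of pole.
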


By the duality between the Dirichlet problem and the Regularity problem (see Definition~\ref{def.Rp}) in Lipschitz domains \cite{Shen07}, the $L^p$ Regularity problem is solvable whenever the $L^{p'}$ Dirichlet problem is solvable and the $L^q$ Regularity problem is solvable for some $q>1$. Since the latter is known to hold for the Laplacian in Lipschitz domains (\cite{Ver84, DK87}), we obtain the same range of $p$ for which the $L^p$ Regularity problem is solvable as for the Dirichlet problem.
\begin{corollary}
    Let $\Omega \subset \R^n$ be a  bounded strongly quasiconvex domain. Then the $L^p$ Regularity problem for $-\Delta$  in $\om$ is solvable for all  $p\in (1,\infty)$.
\end{corollary}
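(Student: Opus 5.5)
The regularity corollary should follow from Corollary~\ref{MainCor} combined with Shen's duality principle \cite{Shen07}. Shen's result says the following: in a bounded Lipschitz domain, if $(R)_q$ is solvable for some $q>1$, then for $1<p<\infty$ the solvability of $(R)_p$ is equivalent to that of $(D)_{p'}$, where $p'$ is the conjugate exponent. So the plan has three steps: verify that a bounded strongly quasiconvex domain is a bounded Lipschitz domain, invoke the known solvability of $(R)_q$ for some $q>1$, and then feed in $(D)_{p'}$ for every $p'\in(1,\infty)$ from Corollary~\ref{MainCor}.

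\textbf{Step 1: $\Omega$ is Lipschitz.} Fix $\epsilon_0$ small, say $\epsilon_0\le 1/2$, and let $r_0$ be the radius given by Definition~\ref{defquasiconvex}. For every $x_0\in\pom$, the set $\Omega\cap B(x_0,r_0)$ coincides with $\Phi_{x_0}(\Omega_{g_{x_0}})\cap B(x_0,r_0)$, where $\Omega_{g_{x_0}}$ is the region above the graph of the $M$-Lipschitz function $g_{x_0}$. By Remark~\ref{rm.Om-graph}, $\Phi_{x_0}(\Omega_{g_{x_0}})$ is again a graph domain, given by $t>g_{x_0}(x+\psi(x))+h(x)$ with $|\nabla\psi|+|\nabla h|\lesssim\epsilon_0$. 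Its defining function has Lipschitz constant at most $M(1+C\epsilon_0)+C\epsilon_0\lesssim M+1$. Since $\Omega$ is bounded, finitely many such balls cover $\pom$, each with a uniform Lipschitz character. Hence $\Omega$ is a bounded Lipschitz domain whose constants depend only on $M$, $n$ and $r_0$.

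\textbf{Step 2: a starting exponent.} On bounded Lipschitz domains, $(R)_q$ for the Laplacian is solvable for $1<q\le 2$, and in fact for $1<q<2+\varepsilon$, by \cite{Ver84,DK87}. In particular, some $q>1$ is available, which is the hypothesis Shen's theorem needs.

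\textbf{Step 3: duality.} Take any $p\in(1,\infty)$. Then $p'\in(1,\infty)$, so Corollary~\ref{MainCor} gives solvability of $(D)_{p'}$ in $\Omega$. Together with Step 2, Shen's theorem yields $(R)_p$.

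The only point needing care is whether the definitions in \cite{Shen07} match Definitions~\ref{def.Dp} and~\ref{def.Rp} as used in this paper. This concerns the bounded-domain setting and the normalization of the nontangential maximal function. Step 1 is the mild technical step, but it is a direct consequence of Remark~\ref{rm.Om-graph}. The analytical content of the argument is entirely in Corollary~\ref{MainCor} and in the cited results.
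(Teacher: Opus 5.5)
Your proposal is correct and follows the paper's argument: Shen's duality \cite{Shen07}, with $(R)_q$ for some $q>1$ supplied by \cite{Ver84,DK87}, and $(D)_{p'}$ for every $p'\in(1,\infty)$ supplied by Corollary~\ref{MainCor}. Your Step 1 spells out the Lipschitz character of $\Omega$, which the paper takes for granted, and it is fine with one adjustment: the proof of Remark~\ref{rm.Om-graph} needs $\epsilon_0$ small in terms of $M$ (namely $C\epsilon_0(M+1)<1$), not merely $\epsilon_0\le 1/2$, which is harmless because Definition~\ref{defquasiconvex} lets you choose any $\epsilon_0>0$.
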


\subsection{Existing methods and their limitations}\label{S.old}
If one views Lipschitz domains with sufficiently small constant as a small Lipschitz perturbation of flat sets, and vanishing chord arc domains as a small perturbation of Lipschitz domains with vanishing constant, then the solvability results mentioned at the beginning of the previous section on $(D)_p$ for all $p>1$ on these two types of domains can be interpreted as perturbation results for domains that preserve the solvability of the $L^p$ Dirichlet problem, for the same value of $p>1$. One feature common to these two results is that both involve small perturbations from ``flat sets''. 
\medskip

Since our base domain may have very large Lipschitz constant---and thus its boundary far from being flat---existing methods either fail to preserve the same value of $p$, or they do not produce the result in the range $1<p<2$. In fact, in most cases, it is not even clear whether any of the existing methods could be applied to the general perturbations considered in our Theorem~\ref{MainTh}. Therefore, we shall often restrict our comparisons to a special case of our result: when the small Lipschitz perturbation occurs only in directions transversal to the boundary of the base domain, i.e., when the perturbed domain $\om$ has the form \eqref{eq.om:g+h}.
\smallskip

One possible approach to study stability of solvability of $L^p$ Dirichlet problem under domain perturbation is via the method of layer potentials, which requires studying the stability of the invertibility of layer potentials in $L^p$. This, in turn, requires not only the sovability of the $L^p$ Dirichlet problem in the base domain $\om_0$, but also that in its complement $\om_0^{\complement}$, together with the solvability of the $L^{p'}$ Regularity and Neumann problems in both $\om_0$ and $\om_0^{\complement}$. 
Such assumptions are too strong for our purposes: they cannot be used to deduce our Theorem~\ref{MainTh} (even in the special case when only transversal perturbation occurs) for $1<p<2$, since knowing that $(D)_p$ is solvable in $\om_0$ alone yields neither $(D)_p$ in $\om_0^\complement$ nor solvability of the $L^{p'}$ Neumann problem in $\om_0$ or $\om_0^\complement$. For example, in a convex domain $\om_0$ the Dirichlet and Neumann problems are solvable for the full range of $p\in(1,\infty)$; but $\om_0^\complement$ is not convex and the Dirichlet and Neumann problems are generally not solvable for the full range of $p$. Note that when $p\ge 2$, the above assumptions are automatically satisfied, since both $\om_0$ and $\om_0^\complement$ are Lipschitz domains. Indeed, \cite{DK87} shows that  for the Laplacian, the $L^p$ Dirichlet, the $L^{p'}$ Regularity and Neumann problems are solvable in Lipschitz domains for all $p\in (2-\epsilon,\infty)$ for some small $\epsilon>0$ depending on the Lipschitz domain. Moreover, this range is optimal, in the sense that there are counterexamples for every $p<2$ (see \cite{KenigBJ}). Thus the method of layer potentials reveals a sharp dichotomy between the cases $p\ge 2$ and $1<p<2$: it gives no new information in the former range, and it breaks down in the latter.
\medskip

Alternatively, one might wish to relate perturbations of domains to corresponding perturbations of operators. However, the existing perturbation theory for elliptic operators does not apply to our setting via any standard change of variables. Let us be more precise.

A standard approach to treating a boundary value problem on a Lipschitz graph domain \[\om=\set{(x,t)\in\R^{n-1}\times\R: t>\vp(x)}\] is to transform it into a boundary value problem on the upper half-space $\Rn_+$ for a different operator via a bi-Lipschitz change of variables (see Proposition~\ref{prop.DbiLip} for a precise formulation of such an example). There are two types of elliptic operators in the form $L=-\diver A\nabla$ arsing from two different bi-Lipschitz change of variables:
\begin{enumerate}
    \item ``$t$-independent'' operators, which are operators whose coefficients are independent of the transversal variable, that is, $A(x,t)=A(x)$;
    \item ``DKP'' operators, which are operators whose coefficients satisfy a Carleson condition, namely, $\dist(\cdot,\pom)|\nabla A|\in\CMs$ (see Definition~\ref{def.CM}).
\end{enumerate}
The ``$t$-independent'' operators are associated with the simple flattening (or lifting) map:
\[
\rho_0: \Rn_+\to \om, \quad (x,t)\mapsto (x, t+\vp(x)),
\]
and the ``DKP'' operators are associated with the Nečas-Kenig-Stein map (see \cite{KP01}):
\[
\rho_1: \Rn_+\to \om, \quad (x,t)\mapsto (x, ct+\eta_t*\vp(x)),
\]
where $\eta_\epsilon$ is a standard mollifier, and $c$ is a constant chosen to make sure that the map is 1-1. 
\smallskip

For the map $\rho_0$, the Lipschitz constant of the boundary is reflected as the $L^\infty$ norm of the pull-back operator, and so Theorem~\ref{MainTh} in the special case  when the perturbed domain $\om$ has the form \eqref{eq.om:g+h} can be reduced to the stability of the solvability of the $L^p$ Dirichlet problem for symmetric, real, $t$-independent operators $L_0=-\diver A_0\nabla$ and $L_1=-\diver A_1\nabla $ in $\Rn_+$, under the perturbation 
\[
\norm{A_0-A_1}_{L^\infty}\le\epsilon.
\]
This question is studied for $p=2$ in \cite{AAAHK11} and for general $p$ in \cite{HMM15} via the method of layer potentials. One of the remarkable features of both works is that their theory applies to operators with complex coefficients. When restricted to the real-coefficient setting, however, the method faces the same limitations discussed above for layer potentials. In particular, it cannot be applied, at least directly, to obtain the range $1<p<2$ in our Theorem~\ref{MainTh}.   
\smallskip

We now discuss the map $\rho_1$. From its construction, one can check that the constant in the Carleson condition of the pull-back operator increases as the Lipschitz constant of $\vp$ increases. Keing and Pipher (\cite{KP01}) showed that the $L^p$ Dirichlet problem is solvable for some (typically large) $p>1$ for DKP operators (with possibly large Carleson constant) on $\Rn_+$, and hence also on Lipschitz domains, since the DKP condition is stable under bi-Lipschitz changes of variables. 
Later, \cite{DPP07} showed that for any $p>1$, if the Carleson constant of the coefficients of a DKP operator is {\it sufficiently small}, then $(D)_p$ is solvable on $\Rn_+$, and therefore on Lipschitz domains with {\it sufficiently small} Lipschitz constant. Both works employ clever and delicate integration-by-parts arguments that depend crucially on a distinguished variable $t$; as a result, one is inevitably led to the flat case $\Rn_+$. If we apply $\rho_1$ to the two Lipschitz domains $\om_0$ and $\om$, we get two DKP operators with {\it large} Carleson constants, for which we only know $(D)_p$ for some large $p$. Although their Carleson constants are close to each other in the special case when $\om$ is a small Lipschitz perturbation of $\om_0$ in the transversal direction, they are not the appropriate Carleson perturbations to each other. In particular, the existing perturbation theory on operators do not apply in this setting.

\subsection{Main ideas of our approach}\label{S.Novel}

The takeaway from the previous discussions is that we do {\it not} want to reduce a Lipschitz domain to the flat case $\Rn_+$ because the small perturbation of the domain is lost in the flattening process. The central idea---and the key novelty---of our approach is to construct a map $\rho$ directly {\it from the Lipschitz graph domain} 
\[\om_0=\set{(x,t)\in\R^{n-1}\times\R: t>g(x)}\]
to the perturbed domain 
$\om=\Phi(\om_0)$,
and to design $\rho$ in such a way that the small perturbation of the domains is appropriately captured in the pull-back operator $L_\rho$ so that it enjoys the same solvablity of the $L^p$ Dirichlet problem as the Laplacian on $\om_0$. 
\smallskip

The first challenge that we face here is that we lose the special variable $t$ (or equivalently, the special direction $e_n$) when constructing a map directly from $\om_0$. Note that this issue is not due to the level of generality of the perturbation we consider: even when the perturbation occurs only in directions transversal to the boundary of $\om_0$, i.e., when the perturbed domain takes the form \eqref{eq.om:g+h},
one can no longer consider maps such as $\rho_0$ or $\rho_1$ mentioned in Section~\ref{S.old} by viewing points in $\om$ as the image of points in $\Rn_+$. 
Our idea is to use the Green function with pole at infinity (see Lemma~\ref{lem GrEm_infty} for the definition) for the Laplacian in the base domain---denoted by $G$---to capture the geometry of $\om_0$. This is not too surprising, as the zero set of the Green function is $\pom_0$, and the level sets of $G$ gives a family of surfaces that covers $\om_0$. We therefore consider the vector field $\nabla G$ on $\om_0$, and build an orthonormal basis $\set{v_1,\dots,v_n}$ for any point $X\in\om_0$, with $v_n=\frac{\nabla G}{|\nabla G|}$. To make sure that it is possible, we show in Lemma~\ref{lemSL=>H} that $\nabla G$ is non-degenerate on $\om_0$, and moreover, that $\frac{G(X)}{|\nabla G(X)|}$ behaves like the distance from $X$ to $\pom_0$. What is even better is that the gradient of $\frac{G}{|\nabla G|}$ is `almost parallel' to $v_n$ at every $X\in\om_0$, in the sense that their difference satisfies a Carleson condition--- see Theorem~\ref{ThGFE2}. Although the result in Theorem~\ref{ThGFE2} looks delicate, we remark that little additional work is required to obtain it as the difficult estimates for the second derivatives of the Green function have already been established in \cite{FL23} and \cite{DLM}, and can also be found implicitly in \cite{Azzam19}. This implies that we can think of the function 
\[t(X):=\frac{G(X)}{|\nabla G(X)|}\quad\text{for }X\in\om_0\]
as the transversal variable $t$ in $\Rn_+$, and hence we have an appropriate special direction for $\om_0$---although the direction depends on the location in $\om_0$.  
\smallskip

Next, we carefully design $\rho$ so that the pull-back operator $L_\rho=-\diver A_\rho\nabla$ from the Laplacian on $\om$ under the map $\rho$ is a small Carleson perturbation of an intermediate elliptic operator $L_0:=-\diver A_0\nabla$ on $\om_0$, and  that $L_0$ enjoys the same solvability of the $L^p$ Dirichlet problem as the Laplacian on $\om_0$. Since small Carleson perturbations of operators preserve solvability of the $L^p$ Dirichlet problem (see Theorem~\ref{ThCarlpert} for a precise statement), this process ensures that  $L_\rho$ has the same Dirichlet solvability as the Laplacian on $\om_0$, and thus the Laplacian has the same Dirichlet solvability on $\om_0$ and $\om$. The major work and challenge is to construct the intermediate operator $L_0$ so that it is close to both the Laplacian on $\om_0$ and the pull-back operator $L_\rho$. This construction---together with the construction of the map $\rho$---is carried out in Sections~\ref{S.v} and \ref{S.cov}, and the properties of $L_0$ is summarized in Lemma~\ref{lemLrho}.
\smallskip

One of the key properties that we want for the coefficient matrix $A_0$ of the intermediate operator $L_0$ is the following: we construct another orthonormal basis $\set{w_1,\dots,w_n}$ from $\set{v_1,\dots,v_n}$, so that {\it when written in these two bases},  the Jacobian matrix of $\rho$ is close---up to a Carleson perturbation---to a matrix in block form. It will imply that $A_\rho$ is close to $A_0$, which, {\it when written in the basis $\set{v_1,\dots,v_n}$}, is of the form
\[
\begin{bmatrix} * & \mathbf{0}^T \\ \mathbf{0} & 1 \end{bmatrix}, \quad \text{where } \mathbf{0} = 0_{\R^{n-1}} = (0,\dots,0).
\]
This implies that $A_0 v_n=v_n$, which, thanks to the direct relation between $v_n$ and the Green function, will be enough to show that $L_0$ and $-\Delta$ have the same $L^p$ Dirichlet solvability. We note that reducing matters to a block-form matrix or to a small perturbation of such a matrix, is not new in this area (see e.g. \cite{HKMP,Fen22cov,DFMDahlberg,DHP23}) and is often the core of the matter, though the idea is sometimes obscured by intricate integration-by-parts arguments. But again, in all previous work, such reductions rely crucially on the structure of $\Rn_+$ and are based on the Euclidean basis $\set{e_1,\dots,e_n}$ that is constant. 
\smallskip

Let us now discuss the main ideas behind the construction of the basis $\set{w_1,\dots,w_n}$ from $\set{v_1,\dots,v_n}$, as well as the map $\rho$. We want $\set{w_1,\dots,w_n}$ to be a small perturbation of $\set{v_1,\dots,v_n}$, capturing the perturbation caused by the bi-Lipschitz map $\Phi$. In particular, the vector $w_n$ 
should be an analog of $v_n$ for the perturbed domain $\om=\Phi(\om_0)$, in the sense that it should play the role as a normal vector to the new boundary $\pom$ that propagates into $\om$. This leads us to look at the perturbation of the boundary, which can be described by the vector-valued function $h:\R^{n-1}\to\Rn$ defined by 
$h(z):=\Phi(z,g(z))-(z,g(z))$ (see \eqref{defh}). However, this function is not smooth enough (it is Lipschitz) to allow us control the resulting Jacobian, and so we introduce a smoothing of $h$ for any point $X\in\om_0$, which is defined as $\lambda(X)$ (see \eqref{deflambda}). 
We then define the map $\rho:\om_0\to\om$ to be 
\[
\rho(X)=X-t(X)v_n(X)+\lambda(X) +a(X)t(X)w_n(X),
\]
or equivalently, as in \eqref{defrho}. Here, one should think of $X-t(X)v_n(X)$ as the projection of $X\in \om_0$ onto $\pom_0$, and so $X-t(X)v_n(X)+\lambda(X)$ is near the  boundary of $\om$ smoothed at scale $t(X)$. The map $\rho$ preserves the special direction of $\pom$ due to our construction of $w_n$. The role of $a(X)$ (defined in \eqref{defaX}) is essentially to make the $nn$-th entry of the matrix $A_0$ equal to 1, which simplifies the step comparing $L_0$ to $-\Delta$.

In the simpler case when the perturbed domain takes the form \eqref{eq.om:g+h}, experts familiar with \cite{DFMDahlberg} may wish to compare our construction of $\rho$ with the change of variables introduced there. In \cite{DFMDahlberg}, a modification of the Nečas–Kenig–Stein map $\rho_1$ is introduced to show that the $L^p$ Dirichlet problem is solvable for some $p>1$ for a class of degenerate elliptic operators in domains with low-dimensional boundaries. One of the key advantages of their modification, even in the co-dimension 1 case, is that it is better adapted to the geometry of the target domain, in that it tends to preserve the orthogonal direction. In this respect, our construction is similar in spirit, but many new ideas are required as the argument must now be carried out relative to a non-constant basis $\{v_1,\dots,v_n\}$, must also accommodate a perturbation of the domain, and we consider perturbations much more general than those occur only in directions transversal to the boundary of the base domain.

\medskip
We conclude the introduction by noting that our approach appears robust enough to apply to more general elliptic operators. For instance, we are hopeful that our results could be extended to operators satisfying the DKP condition, although substantial modifications would be required. Our results may also extend to more general domains whose boundaries are not necessarily given by graphs, following the ideas of \cite{KT97}. In a different direction, our method might be adapted to address Question~\ref{Q2} for the Neumann problem; however, due to the inherent difficulty of the Neumann problem, new ideas would be needed. Finally, a precise characterization of the necessary condition in Question~\ref{Q1} remains open and is likely to be highly challenging.

\subsection*{Acknowledgment} We would like to warmly thank Pascal Auscher, Steve Hofmann, and Zhongwei Shen for their help in understanding the literature on layer potentials.

\section{Preliminaries}

\subsection{Notations and definitions}
\begin{itemize}
   \item For a scalar function $f:\Rn\to\R$, we adopt the convention that $\nabla f$ is represented as a column vector, while a point $X \in \mathbb{R}^n$ is represented as a row vector. 
    \item The dot product of two vectors of the same dimension yields a scalar. We denote it either by the usual dot ``$\cdot$'' or by the inner product notation $\langle \cdot, \cdot \rangle$. 
    \item We use $|\cdot|$ to denote the norm of vectors or matrices.  
We use $\|\cdot\|$ to denote norms on function spaces.  
 When dealing with vector- or matrix-valued functions, $\|\cdot\|$ should be understood as $\||\cdot|\|$, i.e., we first take the pointwise vector/matrix norm and then the function norm.
 \item We denote by $I_{ij}$ the entries of the identity matrix, i.e.,  
\[
I_{ij} =
\begin{cases}
1, & \text{if } i=j, \\
0, & \text{if } i\neq j.
\end{cases}
\]
\item $B_{\mathbb{R}^{n-1}}(x,r)$ denotes the ball in $\mathbb{R}^{n-1}$ centered at $x \in \mathbb{R}^{n-1}$ with radius $r$. We sometimes omit the subscript when the ambient space is clear from the context.
\item $\delta(X)$ denotes $\dist(X,\pom)$ when the underlying domain $\om$ is clear from the context.
\item Given a domain $\om$ and a point $X\in \om$, we denote $B_X:=B(X,\delta(X)/2)$, unless specified otherwise.
\item For $x\in\pom$, $r>0$, we denote by $\Delta(x,r)$ the boundary ball $B(x,r)\cap\pom$.
\item We denote by $\pi$ the orthogonal projection on the first $(n-1)$-coordinates, i.e.
\[\pi: \R^n\to\R^{n-1}, \, (x_1,\dots,x_{n-1},x_n) \mapsto (x_1,\dots,x_{n-1}).\]
\end{itemize}

We use the concept of Carleson measures throughout the article.
\begin{definition}[$\CMs$ and $\CM$]\label{def.CM}
    Let $\om_0$ be a Lipschitz domain in $\Rn$. We say a quantity $f$ defined on $\om_0$ satisfies the Carleson measure condition with sup (denoted by $f\in \CMs$) if $\sup_{\frac12B_X}|f|^2\frac{1}{\delta(X)}dX$ is the density of a Carleson measure on $\om_0$, that is, there is some constant $M>0$ such that 
    \[
    \iint_{B(x,r)\cap\om_0}\br{\sup_{Z\in \frac12B_Y}|f(Z)|}^2\frac{dY}{\delta(Y)}\le Mr^{n-1} \quad\text{for all }x\in\pom, r>0\footnote{ or $0<r<\diam(\pom_0)$ if $\om_0$ is bounded.}.
    \]
 If we want to emphasize the constant $M$, we write $f\in\CMs(M)$. 

We say a quantity $f$ defined on $\om_0$ satisfies the Carleson measure condition (denoted by $f\in \CM$) if $|f(X)|^2\frac{1}{\delta(X)}dX$ is the density of a Carleson measure on $\om_0$. 
\end{definition}

\begin{remark}\label{re.CMsup}
    Observe that if $f\in \CMs(M)$, then there exists $C>0$ such that $\|f\|_{L^\infty(\Omega_0)}\le CM^{1/2}$ for $X\in\om_0$.
\end{remark}

\medskip

Next, we introduce the Green function and elliptic measures. 

For a uniformly elliptic operator $L=-\diver A\nabla$, one can construct the Green function $G_L$ on a Lipschitz domain (cf. \cite{GW82}, and \cite{HK07} for unbounded domains) as well as the elliptic measure $\omega_L$. Although these can be done in more general settings, we only give the definitions and state the related results in Lipschitz domains and for {\bf symmetric} uniformly elliptic operators in order to lighten the notations and avoid introducing extra geometry.

We start with the $L$-elliptic measure with a pole in $\Omega$. For $X \in \Omega$, there exists a unique Borel probability measure $\omega^X_L$
on $\pom$, such that for  $f \in C^\infty(\pom)$ if $\pom$ is bounded and $f \in C_c^\infty(\pom)$ if $\pom$ is unbounded, the function
\begin{equation}\label{eq.EM}
    u(X) = \int_{\pom} f(y) \, d\omega_L^X(y)
\end{equation}
is the unique weak solution to the Dirichlet problem $Lu=0$ in $\om$, $u|_{\pom}=f$
satisfying $u \in C(\overline{\Omega})$, and  if $\om$ is unbounded, additionally $u(X) \to 0$ as $|X| \to \infty$ in $\Omega$. We call 
$\omega^X_L$ the $L$-elliptic measure (or the elliptic measure corresponding to $L$) with pole at $X$.

We can associate to $L=-\diver A\nabla$ a unique Green function in $\Omega$, 
$G_L(X,Y): \Omega \times \Omega \setminus  \text{diag}(\Omega\times \om) \to \R$ 
with the following properties: 
$G_L(\cdot,Y)\in W^{1,2}_{\rm loc}(\Omega\setminus \{Y\})\cap C(\overline{\Omega}\setminus\{Y\})$, 
$G_L(\cdot,Y)\big|_{\pom}\equiv 0$ for any $Y\in\Omega$, 
and $L G_L(\cdot,Y)=\delta_Y$ in the weak sense in $\Omega$, that is,
\begin{equation}\label{Greendef}
    \iint_\Omega A(X)\,\nabla_X G_{L}(X,Y) \cdot\nabla\vp(X)\, dX=\vp(Y), \qquad\text{for any }\vp \in C_c^\infty(\Omega).
\end{equation}
In particular, $G_L(\cdot,Y)$ is a weak solution to $L G_L(\cdot,Y)=0$ in $\Omega\setminus\{Y\}$. 
Given $Y\in\om$, we write $G_L^Y=G_L(\cdot,Y)$ and call it the Green function with pole at $Y$.
\smallskip 

For unbounded domains, it is convenient to define {\it the Green function and elliptic measure with pole at infinity}.  
One can prove the following lemma as in \cite[Lemma 3.7, Corollary 3.2]{KT99}.
\begin{lemma}[Elliptic measure and Green function with pole at infinity]\label{lem GrEm_infty}
Let $\om$ be a Lipschitz graph domain and let $L=-\diver{A\nabla}$ be a symmetric elliptic operator on $\Omega$. 
Then for any fixed $Z_0\in\Omega$, there exists a unique function $G_L\in C(\overline{\om})$ such that 
\[
\begin{cases}
L G_L = 0 \quad \text{in } \Omega\\
G_L>0 \quad \text{in } \Omega\\
G_L=0 \quad \text{on }\pom,\\
\end{cases}
\]
and $G_L(Z_0)=1$. 
In addition, for any sequence $\set{X_k}_k$ of points in $\Omega$
such that $\abs{X_k}\to \infty$, there exists a subsequence (which we still denote by 
$\set{X_k}_k$) such that if we set 
\[u_k(Y) := \frac{G_L(Y,X_k)}{G_L(Z_0, X_k)},\]
 then 
\begin{equation}\label{Greeninfty_lmt}
    u_k \text{ converges uniformly to } G_L \qquad\text{in any compact sets in }\overline{\Omega} \text{ as } k\to\infty.
\end{equation} 
Moreover, there exists a unique locally finite positive Borel 
measure $\omega_L$ on $\pom$ such that the Riesz formula 
\begin{equation}\label{Rieszinfty}
\int_{\pom} f(y) \, d\omega_L(y) = - \iint A(Y)\nabla G_L(Y) \cdot \nabla F(Y) \, dY 
\end{equation}
holds whenever $f \in C_c^\infty(\pom)$ and $F \in C_c^\infty(\Omega)$ are such that $F|_{\pom}=f$.

We call $\omega_L$ the elliptic measure with pole at infinity (normalized at $Z_0$), and $G_L$ the Green function with pole at infinity (normalized at $Z_0$). 
\end{lemma}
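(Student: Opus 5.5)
The plan is to build $G_L$ as a normalized limit of Green functions with poles going to infinity, and to read $\omega_L$ off the Riesz formula for those Green functions. The two tools are boundary Harnack (the comparison principle) and Hölder continuity up to the Lipschitz boundary.

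\textbf{Construction.} Fix $Z_0\in\Omega$ and a sequence $X_k\in\Omega$ with $|X_k|\to\infty$, and set $u_k(Y)=G_L(Y,X_k)/G_L(Z_0,X_k)$. On a ball $B(0,R)$, for $k$ large (so that $|X_k|\gg R$), $u_k$ is a positive $L$-solution in $\Omega\cap B(0,2R)$ vanishing on $\pom\cap B(0,2R)$, with $u_k(Z_0)=1$. The Harnack chain condition on Lipschitz graph domains, together with Carleson's estimate, gives $\sup_{\Omega\cap B(0,R)}u_k\le C(R)$. Boundary Hölder continuity (De Giorgi--Nash--Moser plus the capacity density of $\Omega^\complement$) then gives equicontinuity on $\overline\Omega\cap B(0,R)$.

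\textbf{Existence.} Arzelà--Ascoli and a diagonal argument extract a subsequence converging locally uniformly on $\overline\Omega$ to a limit $G$. Weak solutions are stable under uniform convergence thanks to Caccioppoli, so $LG=0$ in $\Omega$. We also get $G=0$ on $\pom$, $G\ge0$, and $G(Z_0)=1$. The strong Harnack inequality then gives $G>0$ in $\Omega$.

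\textbf{Uniqueness.} This is the main step, and I expect it to be the main obstacle. Let $G_1,G_2$ be two such functions. Apply the boundary Harnack principle (comparison principle) in $\Omega\cap B(0,R)$ for Lipschitz graph domains, in its scale-invariant form with Hölder decay of the ratio:
\[
\operatorname{osc}_{\Omega\cap B(0,r)} \frac{G_1}{G_2}\le C\Big(\frac rR\Big)^\alpha\operatorname{osc}_{\Omega\cap B(0,R)}\frac{G_1}{G_2}.
\]
Comparability of the ratio on $B(0,R)$, with constants independent of $R$, comes from comparing both functions at the corkscrew point $A_R$ of scale $R$. Letting $R\to\infty$ forces $G_1/G_2$ to be constant, and the normalization at $Z_0$ makes that constant $1$. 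This step needs care because the domain is unbounded, so the ratio's comparability constant must be uniform in $R$.

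\textbf{Full-sequence statement.} Uniqueness implies that every subsequential limit equals $G_L$. This yields \eqref{Greeninfty_lmt}: for any sequence $|X_k|\to\infty$, some subsequence of $u_k$ converges locally uniformly to $G_L$.

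\textbf{Elliptic measure.} For finite poles, \eqref{Greendef} combined with the definition of $\omega_L^{X_k}$ gives
\[
\int f\,d\omega^{X_k}_L=-\iint A\nabla_Y G(Y,X_k)\cdot\nabla F\,dY
\]
whenever $X_k\notin\operatorname{supp}F$; this uses the symmetry of $A$. Divide by $G_L(Z_0,X_k)$ and set $\mu_k=\omega_L^{X_k}/G_L(Z_0,X_k)$. The measures $\mu_k$ are locally uniformly bounded: the standard estimate $\omega^{X}(\Delta(x,r))\approx r^{n-2}G(A_{x,r},X)$ together with the uniform local bounds on $u_k$ gives this. Extract a weak-$*$ limit $\omega_L$. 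Caccioppoli gives $\nabla u_k\to\nabla G_L$ in $L^2_{\rm loc}(\Omega)$, so \eqref{Rieszinfty} follows in the limit.

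Uniqueness of $\omega_L$ holds because \eqref{Rieszinfty} determines $\int f\,d\omega_L$ for every $f\in C_c^\infty(\pom)$: every such $f$ has an extension $F$, and the right-hand side does not depend on the choice of $F$ (integrate by parts against $G_L$ using $LG_L=0$). Since the $f$ are dense in $C_c(\pom)$, the measure is determined, and positivity follows from positivity of each $\mu_k$.
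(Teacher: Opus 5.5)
Your proposal is correct and follows essentially the route of the argument the paper defers to (\cite[Lemma 3.7, Corollary 3.2]{KT99}): $G_L$ as a locally uniform limit of normalized finite-pole Green functions, uniqueness from the scale-invariant boundary Harnack principle with the ratio pinned by the normalization at $Z_0$, and $\omega_L$ as a vague limit of $\omega_L^{X_k}/G_L(Z_0,X_k)$ through the finite-pole Riesz formula. The one step to tighten is the passage to the limit in \eqref{Rieszinfty}: since $\operatorname{supp}F$ meets $\partial\Omega$, convergence of $\nabla u_k$ in $L^2_{\rm loc}(\Omega)$ is not enough, but the boundary Caccioppoli inequality applied to $u_k-G_L$ (which vanishes on $\partial\Omega$) gives $\nabla u_k\to\nabla G_L$ in $L^2(\Omega\cap K)$ for every compact $K\subset\R^n$, which is what you need.
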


\begin{definition}[$(D)_{p}^L$]\label{def.Dp}
    Let $p\in (1,\infty)$. We say that the $L^{p}$ Dirichlet problem $(D)_p$ - or $(D)_p^L$ when we mention the operator - is solvable in $\om$ if there is $C>0$ such that for any $f\in C^\infty_0(\R^n)$, the solution $u_f$ to $Lu_f=0$ in $\om$ defined with the help of the elliptic measure by
\begin{equation} \label{defufhm}
u_f(X) := \int_{\partial \Omega} f(y)\, d\omega_L^X(y)
\end{equation}
satisfies
\[\|N(u_f)\|_{L^{p}(\partial \Omega)} \leq C \|f\|_{L^{p}(\partial \Omega)},\]
where 
\begin{equation*}
N(u)(x)=N_a(u)(x):=\sup_{Y\in\gamma(x)}|u(Y)|, \text{ and } \gamma(x)=\gamma_a(x)=\set{Y\in\om:\, |Y-x|<a\dist(Y,\pom)}
\end{equation*}
is a ``cone'' with vertex $x$ and aperture $a$. Note that the constant $C$ depends on the aperture $a$ of the cone. However, the $L^p$ solvability of the Dirichlet problem is independent of the aperture $a$, since real variable techniques shows that $\|N_a(.)\|_{L^p(\partial \Omega)}$ and $\|N_b(.)\|_{L^p(\partial \Omega)}$ are equivalent norms (see \cite{Stein93} in the Euclidean case, or Lemma 2.1 in \cite{MPT} for the proof in our setting). 
\end{definition}

\begin{definition}[$(R)_{p}^L$]\label{def.Rp}
    Let $p\in (1,\infty)$ and let $\om\subset\Rn$ be a Lipschitz domain. We say that the $L^{p}$ Regularity problem $(R)_p$ - or $(R)_p^L$ when we mention the operator - is solvable in $\om$ if there is $C>0$ such that for any $f\in C^\infty_0(\R^n)$, the solution $u_f$ to $Lu_f=0$ in $\om$ defined as in \eqref{defufhm}
satisfies
\[\|\wt N(\nabla u_f)\|_{L^{p}(\partial \Omega)} \leq C \|\nabla f\|_{L^{p}(\partial \Omega)},\]
where 
\begin{equation*}
\wt N(u)(x)= \sup_{Y\in\gamma (x)}\fiint_{B_Y}|u(Z)|dZ.
\end{equation*} 
\end{definition}

\subsection{Some properties on $(D)_p$}
It is well-known that the elliptic measure and Green function with pole at infinity can be compared via the comparison principle (see, for example, \cite{Ken94}). This is also true when the pole is at infinity.  As before, we state the results only in the specific settings relevant to our work, rather than in their full generality. 

\begin{lemma}[Lemmas 3.3 and 3.4 in \cite{DFMpert}]\label{lemG=om} 
Let $\Omega_0$ be a Lipschitz graph domain, and let $L = -\diver A \nabla$ be a {\bf symmetric} uniformly elliptic operator on $\Omega_0$. Then there exists $C>0$ such that the Green function with pole at infinity $G_L$ and the elliptic measure $\omega_L$ with pole at infinity verifies
\[C^{-1} \delta(X)^{n-2} G_L(X) \leq \omega_L(4B_X) \leq \omega_L(8B_X) \leq C \delta(X)^{n-2} G_L(X) \qquad \text{ for all } X \in \Omega_0,\]
where $B_X=B(X,\delta(X)/2)$. 
In particular, $\omega_L$ is a doubling measure. 

Moreover, the constant $C=C(n,M,C_{ellip})$ depends only the dimension $n$, the Lipschitz constant $M$ of $\Omega_0$, and the ellipticity constant $C_{ellip}$ of $L$.
\end{lemma}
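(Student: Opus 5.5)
The plan is to reduce everything to the standard boundary Harnack and CFMS-type estimates for symmetric operators in Lipschitz domains, and then pass the pole to infinity using Lemma~\ref{lem GrEm_infty}. Fix $X\in\Omega_0$ and let $x\in\partial\Omega_0$ be a point with $|X-x|=\delta(X)$. Put $r=\delta(X)$. Then $\Delta(x,r/2)\subset 4B_X\cap\partial\Omega_0$, and $8B_X\cap\partial\Omega_0\subset\Delta(x,5r)$. It therefore suffices to prove
\[
\omega_L(\Delta(x,r/2))\gtrsim r^{n-2}G_L(X) \quad\text{and}\quad \omega_L(\Delta(x,5r))\lesssim r^{n-2}G_L(X).
\]
The middle inequality $\omega_L(4B_X)\le\omega_L(8B_X)$ is trivial by monotonicity. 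By Harnack's inequality along a Harnack chain of bounded length, whose length depends only on $n$ and $M$, we may replace $X$ by the corkscrew point $A_s(x)=x+se_n$ for the relevant scale $s\approx r$ at the cost of constants.

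We first work with a finite pole $Y$ far away, say $|Y-x|\gg r$. The classical estimate (Caffarelli--Fabes--Mortola--Salsa, as in \cite{Ken94}) for symmetric elliptic $L$ in Lipschitz domains gives
\[
\omega_L^Y(\Delta(x,s))\approx s^{n-2}G_L(A_s(x),Y) \qquad\text{for } s<|Y-x|/C.
\]
The lower bound comes from the Hölder decay at the boundary together with the maximum principle, applied to compare $G(\cdot,Y)$ with $\omega^{\cdot}(\Delta)$ on $B(x,2s)\cap\Omega_0$. The upper bound comes from the Riesz formula with a cutoff adapted to $\Delta(x,s)$ and Caccioppoli's inequality. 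Now divide by $G_L(Z_0,Y)$ and by the corresponding normalization for elliptic measure. The Riesz formula \eqref{Rieszinfty} and its finite-pole analogue show that $\omega_L^{Y}/G_L(Z_0,Y)$ converges weakly to $\omega_L$ along the sequence $Y=X_k\to\infty$ of Lemma~\ref{lem GrEm_infty}. This holds because $\nabla u_k\to\nabla G_L$ locally in $L^2$, by Caccioppoli and uniform convergence on compacta. Weak convergence, together with the two-sided bound applied to slightly larger and smaller balls (so that boundary effects of open versus closed sets disappear), passes both inequalities to the limit, with $G_L(X)=\lim_k u_k(X)$.

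The main obstacle is making the constants uniform and scale-invariant. Since $\Omega_0$ is an unbounded graph domain with scale-invariant geometry, every step above (corkscrew and Harnack chain conditions, boundary Hölder continuity, and CFMS) has constants depending only on $n$, $M$, and $C_{ellip}$. One must also check that the restriction $s<|Y-x|/C$ is harmless once $Y\to\infty$. With the two-sided bound in hand, doubling follows at once. For $\Delta=\Delta(x,s)$, pick $X'$ with $\delta(X')\approx s$ near $x$. Then Harnack gives
\[
\omega_L(\Delta(x,2s))\lesssim s^{n-2}G_L(X')\lesssim\omega_L(\Delta(x,s)).
\]
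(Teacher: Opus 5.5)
Your argument is correct, and it is the standard route behind Lemmas 3.3--3.4 of \cite{DFMpert}; the paper gives no proof of its own beyond that citation. The route is: finite-pole CFMS estimates, then letting the pole $X_k\to\infty$ through the normalization $u_k$ of Lemma~\ref{lem GrEm_infty} and the Riesz formula \eqref{Rieszinfty}, tested against cutoffs on slightly larger and smaller balls.

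Two small corrections. First, the convergence $\nabla u_k\to\nabla G_L$ must hold in $L^2$ up to $\partial\Omega_0$, not just on interior compacta, because $\nabla F$ in \eqref{Rieszinfty} is supported up to the boundary. This follows from boundary Caccioppoli applied to $u_k-G_L$, which vanishes on $\partial\Omega_0$, together with uniform convergence on compacta of $\overline{\Omega_0}$. Second, your sketch of the classical lower bound $\omega^Y(\Delta(x,s))\gtrsim s^{n-2}G_L(A_s(x),Y)$ is off. It is proved by comparing $s^{n-2}G_L(\cdot,A_s(x))$ with $\omega^{\cdot}(\Delta(x,s))$ on $\Omega_0\setminus B(A_s(x),cs)$, then using the symmetry $G_L(Y,A_s(x))=G_L(A_s(x),Y)$; a comparison inside $B(x,2s)\cap\Omega_0$ gives no information at the distant pole $Y$. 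The upper bound also needs the Carleson estimate $\sup_{B(x,2s)\cap\Omega_0}G_L(\cdot,Y)\lesssim G_L(A_s(x),Y)$ in addition to Caccioppoli.
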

There is also a finite-pole version of the lemma, which we omit here.

The elliptic measure and Green function with pole at infinity can be used to characterize the solvability of the Dirichlet problem.
\begin{theorem} \label{ThDP<=>RH}
Let $\Omega_0$ be a Lipschitz graph domain, and write $\sigma$ for the surface measure on $\partial \Omega_0$. Let $L=-\diver A\nabla$ be a symmetric uniformly elliptic operator, and $p\in (1,\infty)$. Then the following are equivalent:
\begin{enumerate}[(i)]
\item The $L^p$ Dirichlet problem $(D)_p^L$ is solvable in $\Omega_0$.
\item The elliptic measure with pole at infinity $\omega_L$ is absolutely continuous with respect to $\sigma$, and there exists $C_p>0$ such that the function $\kappa$ defined as
\[\kappa(x):= \limsup_{X\in \gamma(x)\atop X\to x} \frac{G_L(X)}{\delta(X)}\]
satisfies
\begin{equation} \label{defRH}
\left(\fint_{B(x,r)\cap \partial \Omega_0} \kappa^{p'}(y) \, d\sigma(y)\right)^\frac1{p'} \leq C_p \fint_{B(x,r)\cap \partial \Omega_0} \kappa(y) \, d\sigma(y) \qquad \text{ for all } x\in \partial \Omega_0, \, r>0.
\end{equation}
Here $p'$ stands for the H\"older conjugate of $p$.
\end{enumerate}
When $\Omega_0$ is a bounded Lipschitz domain, we take a point $X_0\in\om_0$ such that $\dist(X_0,\partial \Omega_0) \approx \diam (\Omega)$. Then (i) is equivalent to 
\begin{enumerate}[(i)]
    \setcounter{enumi}{1} 
    \item The elliptic measure $\omega_L^{X_0}$  is absolutely continuous with respect to $\sigma$, and there exists $C_p>0$ such that the function $\kappa^{X_0}$ defined as
\end{enumerate}
\[
\kappa^{X_0}(x):=\limsup_{X\in \gamma(x)\atop X\to x} \frac{G_L^{X_0}(X)}{\delta(X)},
\]
satisfies 
\begin{equation} \label{defRH'}
\left(\fint_{B(x,r)\cap \partial \Omega_0} \br{\kappa^{X_0}}^{p'}(y) \, d\sigma(y)\right)^\frac1{p'} \leq C_p \fint_{B(x,r)\cap \partial \Omega_0} \kappa^{X_0}(y) \, d\sigma(y) \qquad \text{for } x\in \partial \Omega, \, 1<r<\diam(\om).
\end{equation}
\end{theorem}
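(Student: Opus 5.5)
The plan is to prove the equivalence (i)$\Leftrightarrow$(ii) for the Lipschitz graph domain $\Omega_0$ by routing both conditions through the classical $A_\infty$/reverse Hölder characterization of $(D)_p$. I will first identify $\kappa$ with the density $d\omega_L/d\sigma$, and then rewrite the nontangential maximal function in terms of a maximal operator adapted to $\omega_L$. The bounded case is handled in the same way with $\omega_L^{X_0}$ and $G_L^{X_0}$, using the finite-pole version of Lemma~\ref{lemG=om}, at scales up to $\diam(\Omega)$.

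\emph{Step 1: $\kappa$ is the Poisson kernel.} By Lemma~\ref{lemG=om}, for $X\in\gamma(x)$ we have $\omega_L(\Delta(x,C\delta(X)))\approx \delta(X)^{n-2}G_L(X)$. Dividing by $\sigma(\Delta(x,C\delta(X)))\approx\delta(X)^{n-1}$ gives
\[
\frac{G_L(X)}{\delta(X)}\approx \frac{\omega_L(\Delta(x,C\delta(X)))}{\sigma(\Delta(x,C\delta(X)))}.
\]
When $\omega_L\ll\sigma$, Lebesgue differentiation (valid because $\sigma$ is doubling) shows that $\kappa\approx k:=d\omega_L/d\sigma$ $\sigma$-a.e., with constants depending on $n,M,C_{ellip}$. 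Since replacing a function by a pointwise comparable one preserves reverse Hölder inequalities up to constants, \eqref{defRH} is equivalent to $k\in RH_{p'}(\sigma)$.

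\emph{Step 2: (ii)$\Rightarrow$(i).} Write $u_f(X)=\int f\,d\omega_L^X$. Comparing $\omega_L^X$ with $\omega_L$ through the change-of-pole / comparison principle, I get, for $X\in\gamma(x)$,
\[
|u_f(X)|\lesssim \sup_{r>0}\frac{1}{\omega_L(\Delta(x,r))}\int_{\Delta(x,r)}|f|\,d\omega_L=:M_{\omega}f(x).
\]
This uses the standard estimate $\omega_L^X(\Delta(y,2^j\delta))\lesssim 2^{-j\alpha}$-type decay together with the doubling of $\omega_L$, and it gives $N(u_f)\lesssim M_\omega f$. Next, $k\in RH_{p'}$ yields, by Hölder's inequality,
\[
\frac{1}{\omega_L(\Delta)}\int_\Delta |f|k\,d\sigma\lesssim \Big(\fint_\Delta|f|^p\,d\sigma\Big)^{1/p},
\]
so $M_\omega f\lesssim (M_\sigma |f|^p)^{1/p}$. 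Gehring's self-improvement upgrades $RH_{p'}$ to $RH_{p'+\epsilon}$, which lets me replace $p$ by some $q<p$ in this bound. The $L^{p/q}$-boundedness of $M_\sigma$ then gives $\|N(u_f)\|_{L^p(\sigma)}\lesssim\|f\|_{L^p(\sigma)}$.

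\emph{Step 3: (i)$\Rightarrow$(ii).} Fix a surface ball $\Delta=\Delta(x,r)$ and a corkscrew point $X_\Delta$ at scale $r$. For $0\le f\in C_c$ supported in $\Delta$, (i) together with Harnack's inequality gives
\[
\int f\,d\omega_L^{X_\Delta}=u_f(X_\Delta)\le \inf_{\Delta} N(u_f)\lesssim \sigma(\Delta)^{-1/p}\|f\|_{L^p(\sigma)}.
\]
By duality, $\omega_L^{X_\Delta}|_\Delta\ll\sigma$ with density in $L^{p'}$ satisfying $\|k^{X_\Delta}\|_{L^{p'}(\Delta)}\lesssim \sigma(\Delta)^{-1/p}$. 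By the change-of-pole formula (a consequence of the comparison principle for the pole at infinity), $k^{X_\Delta}\approx k/\omega_L(\Delta)$ on $\Delta$, and doubling then yields \eqref{defRH}. Absolute continuity on all of $\pom_0$ follows by exhausting $\pom_0$ with such balls.

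\emph{Main obstacle.} The only delicate ingredient is the change-of-pole comparison $d\omega_L^{X}/d\omega_L\approx 1/\omega_L(\Delta)$ on $\Delta$ for a pole at infinity; this is also what drives the maximal function bound in Step 2. I would derive it from the boundary Harnack principle applied to $G_L(\cdot,X)$ and $G_L$ outside $2B$, combined with Lemma~\ref{lemG=om}. This is standard for Lipschitz domains and symmetric operators, following \cite{Ken94,KT99}.
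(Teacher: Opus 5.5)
Your proposal is correct and follows the paper's route. The paper also cites the classical equivalence between $(D)_p^L$ and $k=d\omega_L/d\sigma$ satisfying the reverse H\"older inequality (Kenig's book, Theorem 1.7.3), and then proves $\kappa\approx k$ $\sigma$-a.e.\ exactly as in your Step~1, via Lemma~\ref{lemG=om} and Lebesgue differentiation. Your Steps~2 and~3 only unpack that cited classical result: the bound $N(u_f)\lesssim M_\omega f$ plus Gehring for one direction, and duality plus change of pole for the other.
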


\bp
It is well-known that $(i)$ is equivalent to $\omega_L\ll \sigma$ and that there exists $C_p>0$ such that the function $k:=\frac{d\omega_L}{d\sigma}$ satisfies \eqref{defRH}, or, in the bounded case, $k^{X_0}:=\frac{d\omega_L^{X_0}}{d\sigma}$ satisfies \eqref{defRH'}---see, for instance, \cite[Theorem 1.7.3]{Ken94}. We claim that $k$ and $\kappa$ are comparable, and that $k^{X_0}$ and $\kappa^{X_0}$ are comparable, with the implicit constants depending only the dimension $n$, the Lipschitz constant $M$ of $\Omega_0$, and the ellipticity constant $C_{ellip}$ of $L$. Note that the equivalence follows once this claim is justified.

We only prove the unbounded (i.e. Lipschitz graph domain) case as the bounded case is similar. 
For any $X\in\gamma(x)$, write $s_X$ for $\delta(X)$. Then by Lemma~\ref{lemG=om},
\begin{equation}\label{eq.Gd=omsig}
    \frac{G_L(X)}{\delta(X)}\approx \frac{\omega_L(\Delta(x,s_X))}{\sigma(\Delta(x,s_X))}
\end{equation}
since $\sigma(\Delta(x,s_X))\approx s_X^{n-1}=\delta(X)^{n-1}$. This implies that 
\[
k(x)=\liminf_{s\to 0}\frac{\omega_L(\Delta(x,s))}{\sigma(\Delta(x,s))}\lesssim\limsup_{X\in \gamma(x)\atop X\to x}\frac{G_L(X)}{\delta(X)}\lesssim\limsup_{s\to 0}\frac{\omega_L(\Delta(x,s))}{\sigma(\Delta(x,s))}=k(x)\quad \sigma-\text{a.e. }x\in\pom_0 
\]
by the assumption that $\omega_L\ll\sigma$. Hence we have that $\kappa\approx k$ a.e..
\ep

The next result that we need concerns the stability of the Dirichlet problem under small Carleson perturbations. The earliest result of this kind was established by Dahlberg in \cite{Dah86}, which proves stability under Carleson perturbations with vanishing trace. The result below follows directly from \cite[Theorem 1.1]{CHM} together with Theorem \ref{ThDP<=>RH}. We note that the conclusion remains valid without assuming symmetry of the coefficient matrices; however, we state it in this form since it suffices for our application, and we are not aware of an appropriate reference in the general case.
\begin{theorem} \label{ThCarlpert}
Let $\Omega_0$ be a Lipschitz graph domain, and let $L_0:=-\diver A_0 \nabla$ and $L_1:=-\diver A_1\nabla$ be symmetric uniformly elliptic operators. 
Let $p\in (1,\infty)$ such that the Dirichlet problem for $L_0$ in $\Omega_0$ with boundary data in $L^p(\partial \Omega_0)$ is solvable .

There exists $\epsilon_1>0$ such that if $A_1-A_0 \in \CMs(\epsilon_0)$, then the Dirichlet problem for $L_1$ in $\Omega$ with data in $L^p(\partial \Omega_0)$ is solvable, meaning that the reverse H\"older bounds \eqref{defRH} holds with some constant $C'_{p}$.

\medskip

Moreover, the constants $\epsilon_1 = \epsilon_1(n,p,C_p,M,C_{ellip})$ and $C'_{p} = C'_{p}(n,p,C_p,M,C_{ellip})$ depends only on the dimension $n$, the parameter $p$, the reverse H\"older constant $C_p$ in \eqref{defRH} for the $L_0$ in $\Omega_0$, the Lipschitz constant $M$ of $\Omega_0$, and the ellipticity constant $C_{ellip}$ of $L_0$.
\end{theorem}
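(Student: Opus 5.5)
The plan is to read Theorem~\ref{ThCarlpert} off the small Carleson perturbation result \cite[Theorem 1.1]{CHM}, translating between the two formulations of $(D)_p$ through Theorem~\ref{ThDP<=>RH}. The domain is $\Omega=\Omega_0$ throughout, and the smallness hypothesis is $A_1-A_0\in\CMs(\epsilon_1)$. As I read \cite{CHM}, it states the following in Lipschitz graph domains (more generally, 1-sided CAD with uniformly rectifiable boundary). If $L_0$ satisfies $(D)_p$, equivalently $\omega_{L_0}\in RH_{p'}(\sigma)$, and the disagreement $\epsilon(X):=\sup_{Z\in B(X,\delta(X)/2)}|A_1(Z)-A_0(Z)|$ gives a Carleson measure $\epsilon(X)^2\delta(X)^{-1}dX$ with sufficiently small norm, then $L_1$ satisfies $(D)_p$ with the same $p$.

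So the first step is to check that the hypothesis $A_1-A_0\in\CMs(\epsilon_1)$ implies the \cite{CHM} smallness hypothesis. The only difference is the enlargement of the sup region from $\frac12B_Y$ to $B_Y$. I handle it by a covering argument. Each ball $B(X,\delta(X)/2)$ is covered by a bounded number (depending on $n$) of balls $\frac12B_{Y_i}$ with $\delta(Y_i)\approx\delta(X)$. Integrating over $Y$ near $X$ then turns this pointwise covering into a bound of the Carleson norm of $\epsilon$ by $C(n)\epsilon_1$, on a slightly enlarged box $B(x,Cr)\cap\Omega_0$, which still scales like $r^{n-1}$.

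The second step is to pass between the formulations. By Theorem~\ref{ThDP<=>RH}, $(D)_p^{L_0}$ gives \eqref{defRH} for $\kappa_{L_0}$, hence $k_{L_0}=d\omega_{L_0}/d\sigma\in RH_{p'}$ with a constant controlled by $C_p,n,M,C_{ellip}$. This is because $k\approx\kappa$ with such constants, as shown in that proof. We then apply \cite{CHM} to get $k_{L_1}\in RH_{p'}$. One point needs care: $L_1$ is uniformly elliptic with constant comparable to that of $L_0$, since $\|A_1-A_0\|_\infty\lesssim\epsilon_1^{1/2}$ by Remark~\ref{re.CMsup}. The converse direction of Theorem~\ref{ThDP<=>RH} then gives \eqref{defRH} for $\kappa_{L_1}$ with a constant $C'_p$.

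The main obstacle is bookkeeping the dependence of constants. \cite{CHM} works with finite poles, or in bounded domains, so I also have to transfer its conclusion to the pole-at-infinity normalization. For this I use Lemma~\ref{lem GrEm_infty} and Lemma~\ref{lemG=om}: $\omega_L^{X}$ with pole $X$ far away, renormalized, converges to $\omega_L$, and the reverse H\"older inequality on a fixed surface ball is uniform in the pole by the change of pole formula. A limiting argument then preserves \eqref{defRH} with the same constant. Checking that $\epsilon_1$ and $C_p'$ depend only on $(n,p,C_p,M,C_{ellip})$ amounts to tracing the constants in \cite{CHM}, which are quantitative in these parameters.
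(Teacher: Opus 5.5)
Your proposal is correct and takes the same route as the paper, which justifies the statement in one line as a consequence of \cite[Theorem 1.1]{CHM} combined with Theorem~\ref{ThDP<=>RH}, with the statement's typos read as you read them ($\CMs(\epsilon_1)$ for $\CMs(\epsilon_0)$, and $\Omega_0$ for $\Omega$). The paper leaves three points implicit, and your handling of each is sound: the covering argument that enlarges the sup region from $\frac12 B_Y$ to $B_Y$, the control of the ellipticity of $L_1$ via Remark~\ref{re.CMsup}, and the change-of-pole transfer between finite poles and the pole at infinity.
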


We also need the stability of the Dirichlet problem under bi-Lispchitz change of variables.
\begin{proposition}\label{prop.DbiLip}
    Let $\om_0$ and $\om$ be two Lipschitz domains in $\Rn$, and let $\rho: \om_0\to\om$ be a bi-Lipschitz map that satisfies $\rho(\pom_0)=\pom$. Then the solvability of the $L^p$ Dirichlet problem is stable under $\rho$. That is, let $L=-\diver A\nabla $ be an elliptic operator on $\om$, and let $L_\rho$ be the pull-back of $L$ under $\rho$ (a.k.a. the conjugate of $L$ by $\rho$), then the $L^p$ Dirichlet problem for $L$ is solvable in $\om$ if and only if the $L^p$ Dirichlet problem for $L_\rho$ is solvable in $\om_0$. 
\end{proposition}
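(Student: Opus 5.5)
The plan is to exploit the fact that solutions and elliptic measures transform cleanly under $\rho$, and that every quantity in Definition~\ref{def.Dp} is preserved up to constants by a bi-Lipschitz map. Write $A_\rho(X) := |\det D\rho(X)|\,(D\rho(X))^{-1} A(\rho(X)) (D\rho(X))^{-T}$ and $L_\rho=-\diver A_\rho\nabla$. Since $\rho$ is bi-Lipschitz, $D\rho$ and $(D\rho)^{-1}$ are bounded a.e. and $|\det D\rho|$ is bounded above and below. Hence $A_\rho$ is bounded and uniformly elliptic, with constants depending only on the bi-Lipschitz constant of $\rho$ and the ellipticity of $A$; if $A$ is symmetric, so is $A_\rho$.

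\textbf{Step 1: correspondence of solutions and elliptic measures.} By the change of variables formula for Sobolev functions under bi-Lipschitz maps, $u\in W^{1,2}_{\rm loc}(\om)$ is a weak solution of $Lu=0$ if and only if $v:=u\circ\rho\in W^{1,2}_{\rm loc}(\om_0)$ is a weak solution of $L_\rho v=0$. To see this, take test functions $\vp\circ\rho$ and use $\nabla(u\circ\rho)=(D\rho)^T(\nabla u)\circ\rho$. Continuity up to the boundary is preserved, because $\rho$ extends to a homeomorphism $\overline{\om_0}\to\overline\om$ with $\rho(\pom_0)=\pom$. Decay at infinity is preserved as well, since $|\rho(X)|\to\infty$ iff $|X|\to\infty$. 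By uniqueness in \eqref{eq.EM}, this gives $\omega^{X}_{L_\rho}=(\rho^{-1})_\#\,\omega^{\rho(X)}_L$ on $\pom_0$, i.e.\ $u_{f}\circ\rho=u_{f\circ\rho}$. This holds first for smooth $f$, and then for Lipschitz boundary data by density, which suffices for the solvability estimates.

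\textbf{Step 2: boundary norms and nontangential maximal functions.} Since $\rho|_{\pom_0}$ is bi-Lipschitz between the Lipschitz surfaces $\pom_0$ and $\pom$, the push-forward of $\sigma_0$ is comparable to $\sigma$ (area formula). Therefore $\|f\circ\rho\|_{L^p(\pom_0)}\approx\|f\|_{L^p(\pom)}$. For cones, note that $\delta_\om(\rho(Y))\approx\delta_{\om_0}(Y)$ and $|\rho(Y)-\rho(x)|\approx|Y-x|$. Hence $\rho(\gamma_a(x))\subset\gamma_{b}(\rho(x))$ for some $b=b(a,\text{bi-Lip})$, and symmetrically with $\rho^{-1}$. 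This gives
\[ N_a(u\circ\rho)(x)\le N_b(u)(\rho(x)), \]
and the reverse inequality with apertures swapped. Combined with the aperture-independence of $\|N_a\|_{L^p}$ noted after Definition~\ref{def.Dp}, we obtain $\|N(u_f\circ\rho)\|_{L^p(\pom_0)}\approx\|N(u_f)\|_{L^p(\pom)}$.

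\textbf{Step 3: conclusion.} Combining Steps 1 and 2, the estimate $\|N(u_f)\|_{L^p(\pom)}\lesssim\|f\|_{L^p(\pom)}$ for $L$ is equivalent to the same estimate for $L_\rho$ with data $f\circ\rho$. The correspondence $f\leftrightarrow f\circ\rho$ is onto between the relevant data classes, by density in $L^p$ of Lipschitz functions, so the two solvability statements coincide. Alternatively, in the symmetric graph-domain setting one can argue through Theorem~\ref{ThDP<=>RH}: the Green function transforms as $G_{L_\rho}=G_L\circ\rho$, which gives $\kappa_{L_\rho}\approx\kappa_L\circ\rho$, and the reverse H\"older inequality \eqref{defRH} transfers because surface balls map to comparable surface balls.

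\textbf{Main obstacle.} The one genuinely delicate point is Step 1's identification of elliptic measures. The solution $u_f$ is defined only for $f\in C^\infty_0$, while $f\circ\rho$ is merely Lipschitz. I would handle this by approximating $f\circ\rho$ uniformly by smooth functions with supports in a fixed compact set and using the maximum principle. This shows that $u_{f\circ\rho}$, defined via the elliptic measure, equals $u_f\circ\rho$, and passing $N$-estimates to the limit via Fatou's lemma completes the argument.
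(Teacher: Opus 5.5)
Your proposal is correct and follows essentially the same route as the paper: identify the $L_\rho$-solution with data $f\circ\rho$ with $u_f\circ\rho$, show that $\rho$ maps the cone $\gamma_a(x)$ into a cone $\gamma_b(\rho(x))$ of larger aperture so that $N_a(u_f\circ\rho)(x)\le N_b(u_f)(\rho(x))$, and conclude using comparability of surface measures and aperture-independence of $\|N_a\|_{L^p}$. You are more careful than the paper's sketch about the data class: the paper simply asserts $v_g=u_f\circ\rho$ without noting that $f\circ\rho$ is only Lipschitz, and your uniform-approximation, maximum-principle and Fatou argument fills that point correctly.
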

\begin{proof}
   This result is folklore; we sketch its proof for completeness. We only show that if the $L^p$ Dirichlet problem is solvable for $L$ in $\om$, then the $L^p$ Dirichlet problem is solvable for $L_\rho$ in $\om_0$, as the other direction is similar. Suppose the Lipschitz constant of $\rho$ and $\rho^{-1}$ is bounded by $M$.
   Let $g\in L^p(\pom_0)$ be arbitrary, and let $f=g\circ \rho^{-1}$. Then $f\in L^p(\pom)$, and so by assumption, 
   \begin{equation}\label{eq.Nuf}
       \norm{N_b(u_f)}_{L^p(\pom)}\le C\norm{f}_{L^p(\pom)}
   \end{equation}
   for some $C>0$, where $u_f$ is defined as in \eqref{defufhm}, and $b>0$ is any aperture which will be determined later.  One can verify that the solution $v_g$ given by
   \[
   v_g(X):=\int_{\pom_0}g(y)d\omega_{L_\rho}^X(y)
   \]
   satisfies $v_g=u_f\circ \rho$. 
   Our goal is to show that $\norm{N(v_g)}_{L^p(\pom_0)}\le C'\norm{g}_{L^p(\pom_0)}$ for some $C'>0$. Observe that for any $x\in\pom_0$, we have 
   \[N(v_\rho)(x)=\sup_{Y\in\gamma(x)}|u_f(\rho(Y))|\le \sup_{Y\in\gamma_{M^2}(\rho(x))}|u_f(Y)|=N_{2M^2}(u_f)(\rho(x)),\]
   where $\gamma(x)=\set{Y\in\om_0:\, |Y-x|<2\dist(Y,\pom_0)}$ is a cone in $\om_0$ with vertex $x$ and aperture 1, and $\gamma_{2M^2}(\rho(x))$ is a cone in $\om$ with vertex at $\rho(x)$ and aperture $M^2$. 
  In fact,  for $Y\in\gamma(x)$, since $\rho$ is bi-Lipschitz with constant $M$, \[|\rho(Y)-\rho(x)|\le M|Y-x|\le 2M\dist(Y,\pom_0)\le 2M^2\dist(\rho(Y),\rho(\pom_0)),\]
  which implies that $\rho(Y)\in\gamma_{2M^2}(\rho(x))$. Therefore, taking $b=2M^2$ in \eqref{eq.Nuf}, we get that 
  \[
  \norm{N(v_\rho)}_{L^p(\pom_0)}\le\norm{N_{M^2}(u_f)\circ \rho}_{L^p(\pom_0)}\le C_M\norm{N_{M^2}(u_f)}_{L^p(\pom)}\le C_M\norm{f}_{L^p(\pom)}\le C_{M}\norm{g}_{L^p(\pom_0)},
  \]
  which proves that the $L^p$ Dirichlet problem is solvable for $L_\rho$ in $\om_0$.
\end{proof}

\subsection{Estimates for the Green function.}

\begin{lemma} \label{lemNG<G/d}
 Let $\om_0$ be a domain  in $\Rn$ and let $\Delta u=0$ in $\om_0$. Then there exists $C>0$ such that 
\[
|\nabla u(X)| \leq  \frac{C}{\delta(X)}\br{\fint_{B_X}|u(Y)|^2dY}^{1/2}  \qquad  \text{ for all } X\in \Omega_0,
\]
where  $B_X:= B(X,\delta(X)/2)$. If in addition, $u\ge 0$ in $\om_0$, then 
\[
|\nabla u(X)| \leq C u(X) / \delta(X) \qquad  \text{ for all } X\in \Omega_0.
\]
In particular, if $\Omega_0$ is unbounded and if $G$ is the Green function with pole at infinity for the Laplacian, as defined in Lemma~\ref{lem GrEm_infty}, then there exists $C>0$ such that 
\[|\nabla G(X)| \leq C G(X) / \delta(X) \qquad  \text{ for all } X\in \Omega_0.\]
\end{lemma}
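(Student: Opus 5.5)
The plan is to prove the three claims of Lemma~\ref{lemNG<G/d} in order, each following from the previous one. Standard interior estimates for harmonic functions do all the work.

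For the first estimate, fix $X\in\om_0$ and set $r=\delta(X)/2$, so $B_X=B(X,r)\subset\om_0$. Each partial derivative $\dr_i u$ is harmonic. I apply the mean value property on the smaller ball $B(X,r/2)$ and integrate by parts (divergence theorem):
\[
\dr_i u(X)=\fint_{B(X,r/2)}\dr_i u=\frac{C_n}{r^n}\int_{\dr B(X,r/2)}u\,\nu_i\,d\sigma .
\]
This gives $|\nabla u(X)|\lesssim r^{-1}\sup_{B(X,r/2)}|u|$. I then bound the supremum by the mean value property (for $|u|$, via subharmonicity of $|u|$) on balls $B(Y,r/2)\subset B_X$ for $Y\in B(X,r/2)$:
\[
|u(Y)|\le \fint_{B(Y,r/2)}|u|\lesssim \fint_{B_X}|u|\le\Big(\fint_{B_X}|u|^2\Big)^{1/2},
\]
using Cauchy--Schwarz at the last step. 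Combining the two bounds gives the first inequality with $C=C(n)$.

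For the second claim, assume $u\ge0$. Instead of Cauchy--Schwarz, I use Harnack's inequality on $B_X$, which gives $\sup_{B(X,r/2)}u\le C_n u(X)$ since $B(X,r/2)$ is compactly contained in $B(X,r)$ with comparable radius. Plugging this into the derivative bound $|\nabla u(X)|\lesssim r^{-1}\sup_{B(X,r/2)}u$ yields $|\nabla u(X)|\le Cu(X)/\delta(X)$.

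For the Green function, Lemma~\ref{lem GrEm_infty} says that $G$ is harmonic and positive in $\om_0$, so the second claim applies directly.

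The only point requiring care is keeping the balls nested so that Harnack applies with a dimensional constant. This is routine, and I expect no real obstacle: the constants depend only on $n$, not on the domain.
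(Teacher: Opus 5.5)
Your proof is correct, and it differs from the paper's only in which interior estimates it uses. The paper applies the De Giorgi--Nash--Moser local boundedness estimate to the harmonic functions $\partial_i u$ to get $|\nabla u(X)|^2\lesssim \fint_{\frac12 B_X}|\nabla u|^2$. It then uses the Caccioppoli inequality to reach $\delta(X)^{-2}\fint_{B_X}u^2$, and finally Harnack when $u\ge 0$. That is an energy ($L^2$) argument, written in the form one would use for divergence-form operators.

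You instead use the classical derivative estimate for harmonic functions: the mean value property for $\partial_i u$ plus the divergence theorem gives $|\nabla u(X)|\le \frac{2n}{r}\sup_{\partial B(X,r/2)}|u|$. You then control the supremum by the mean value property and Cauchy--Schwarz. This is fully elementary, avoids Caccioppoli and Moser iteration, and gives explicit constants depending only on $n$.

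Two small simplifications:
\begin{enumerate}
\item You need neither subharmonicity of $|u|$ nor Harnack. For $Y\in\partial B(X,r/2)$ one still has $B(Y,r/2)\subset B_X$, and $|u(Y)|=\bigl|\fint_{B(Y,r/2)}u\bigr|\le\fint_{B(Y,r/2)}|u|\le 2^n\fint_{B_X}|u|$ directly.
\item When $u\ge 0$, the same chain, using the mean value property on $B_X$ itself, gives $u(Y)\le 2^n\fint_{B_X}u=2^n u(X)$. This makes your second claim immediate.
\end{enumerate}

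The application to $G$ is the same as in the paper: $G$ is positive and harmonic in $\om_0$ by Lemma~\ref{lem GrEm_infty}.
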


\bp
This lemma is a classical consequence of the mean value property of harmonic functions, and also holds for solutions to more general elliptic PDEs. We include the short proof for completeness. Since $\partial_i u$ is harmonic in $\Omega_0$ for each $i\in \{1,\dots,n\}$, the De Giorgi-Nash-Moser estimate gives that
\[ |\nabla u(X)|^2 := \sum_{i=1}^n |\partial_i u(X)|^2 \lesssim \sum_{i=1}^n \fint_{\frac12 B_X} | \partial_i u|^2 dY = \fint_{\frac12 B_X} | \nabla u|^2 dY.\]
Using the Caccioppoli inequality, we further get
\[ |\nabla u(X)|^2 \lesssim \frac{1}{\delta(X)^2} \fint_{B_X} u^2(Y) dY,\]
which is the first claim. If $u\ge 0$ in $\om_0$, then the right-hand side above is further bounded by $ \frac{u(X)^2}{\delta(X)^2}$
by the Harnack inequality. The lemma follows.
\ep

\begin{theorem} \label{ThGFE}
Let $\Omega_0$ and $G$ be as in Lemma~\ref{lemNG<G/d}. Then there holds
\[\dfrac{\delta^2|\nabla^2 G|}{G} \in \CMs(M),\]
where $M$ depends on the Lipschitz character of $\Omega_0$.
\end{theorem}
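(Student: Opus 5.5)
We first reduce the supremum to an integral. By interior estimates, $\sup_{Z\in\frac12B_Y}|\nabla^2G(Z)|^2\lesssim \fint_{B_Y}|\nabla^2G|^2$. By Harnack, $G(Z)\approx G(Y)$ and $\delta(Z)\approx\delta(Y)$ on $B_Y$. Hence, after Fubini, it suffices to prove
\[
\iint_{B(x,r)\cap\Omega_0}\frac{\delta(Y)^3|\nabla^2G(Y)|^2}{G(Y)^2}\,dY\lesssim r^{n-1}\qquad x\in\partial\Omega_0,\ r>0,
\]
with $B(x,r)$ replaced by a slightly larger ball if needed. This is a Carleson estimate for the square function of $\nabla G/G$, weighted by $\delta^3$.

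The core step is an integration by parts that exploits harmonicity. The tool is the identity $\Delta |\nabla G|^2=2|\nabla^2G|^2$, or better the identity for $\log G$ type quantities. A cleaner route is to set $u=\partial_iG$, which is harmonic, and use $\Delta(u^2)=2|\nabla u|^2$. We test against $\Psi^2\,\delta^3/G^2$, where $\Psi$ is a cutoff adapted to a Carleson box. In practice we replace $\delta$ by a regularized distance $\tilde\delta\approx\delta$ with $|\nabla^k\tilde\delta|\lesssim\tilde\delta^{1-k}$, so that derivatives can fall on it.

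Integrating by parts twice moves the Laplacian onto $\tilde\delta^3G^{-2}\Psi^2$. The error terms are then of two kinds:
\begin{itemize}
\item terms bounded by $|\nabla G|^2\tilde\delta/G^2\lesssim 1/\tilde\delta$, using Lemma~\ref{lemNG<G/d};
\item cross terms $|\nabla^2G||\nabla G|\tilde\delta^2/G^2$, which are absorbed into the left-hand side by Cauchy--Schwarz.
\end{itemize}
The danger is that $1/\tilde\delta$ is not integrable near $\partial\Omega_0$ in a Carleson box. So the crude bound cannot be used, and one must keep exact cancellations.

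The way around this is to note that $\Delta(G^{-2})=6G^{-4}|\nabla G|^2$ exactly, since $G$ is harmonic. Thus the zeroth-order terms combine into expressions like $\operatorname{div}(\tilde\delta^3\nabla(|\nabla G|^2)/G^2)$ plus $|\nabla G|^2\Delta(\tilde\delta^3)/G^2$. The last term is again of size $1/\tilde\delta$.

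To handle it we use the specific geometry: on a Lipschitz graph domain we may replace $\tilde\delta$ by $G/|\nabla G|$-free quantities. Concretely, we would instead take the weight $\tilde\delta^2\cdot G^{-1}\cdot(\delta/G)$. Alternatively, and this is what I would try first, we work with $w=\log G$, for which $|\nabla^2G|/G\le|\nabla^2w|+|\nabla w|^2$. The main obstacle is precisely controlling $\iint\delta^3|\nabla w|^4$ over the box, since $|\nabla w|\lesssim1/\delta$ alone only gives a non-integrable $1/\delta$. This is why I expect the estimate to genuinely require the deep input cited in the introduction, namely the second-derivative Green function estimates of \cite{FL23,DLM} (implicitly \cite{Azzam19}).

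The plan is therefore to invoke those results for the Laplacian on Lipschitz domains. They state that $\delta\,\nabla(\delta\nabla G/G)$, or equivalently $\delta^2\nabla^2G/G$, satisfies a Carleson condition with constant depending on the Lipschitz character. These results are proved via comparison with the regularized distance and the fact that $\delta|\nabla G|/G$ stays bounded above and below away from degeneration. We then verify that the sup-version follows from the $L^2$-version by the interior reduction in the first step. The remaining bookkeeping, namely boundary terms on the sides and top of the Carleson box, is routine and bounded by $r^{n-1}$ using $\delta|\nabla G|\lesssim G$ and $\delta^2|\nabla^2G|\lesssim G$ pointwise.
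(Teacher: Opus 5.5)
Your final plan matches the paper's proof: cite the $L^2$ Carleson estimate $\delta^2|\nabla^2G|/G\in\CM$ from \cite{FL23} (cf.\ \cite{DLM}), then upgrade to $\CMs$ using interior estimates for the harmonic functions $\partial_i\partial_jG$, Harnack for $G$, and Fubini. The integration-by-parts exploration is unnecessary, and you should drop the aside that $\delta\nabla(\delta\nabla G/G)$ is ``equivalent'' to $\delta^2\nabla^2G/G$: the two differ by $\bigl(\nabla\delta-\delta\nabla G/G\bigr)\otimes\delta\nabla G/G$, which is bounded but not a priori Carleson, so cite the second-derivative form directly.
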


\bp
This result is essentially a special case of the main result of \cite{FL23} (see alternatively \cite[Corollary 1.17]{DLM} and \cite{Azzam19}). Namely, \cite[Theorem 1.13]{FL23} gives that $\dfrac{\delta^2|\nabla^2 G|}{G} \in \CM(M)$, that is,
\begin{equation}\label{eq.D2GCM}
    \iint_{B(x,r)\cap\om_0}\abs{\frac{\nabla^2 G(Y)}{G(Y)}}^2\delta(Y)^3dY\le M r^{n-1} \quad\text{for all }x\in\pom_0,\, r>0.
\end{equation}
So we only need to show that we can take supreme in $\frac12B_Y$ in the integrand, where $B_Y=B(Y,\delta(Y)/2)$. Since $\dr_i\dr_j G$ is harmonic in $\om_0$ for each $i,j\in\set{1,\dots,n}$, the De Giorgi-Nash-Moser estimate gives that 
\[\sup_{Z\in \frac12B_Y}|\nabla^2 G(Z)|\lesssim \fiint_{\frac12 B_Y}|\nabla^2G(Z)|dZ.\] 
Therefore, applying Harnack inequality to $G$ in $\frac{1}{2}B_Y$ and H\"older's inequality, we get that
\begin{multline*}
    \iint_{B(x,r)\cap\om_0}\sup_{Z\in\frac12 B_Y}\abs{\frac{\delta(Z)^2\nabla^2 G(Z)}{G(Z)}}^2\frac{dY}{\delta(Y)}
    \lesssim \iint_{B(x,r)\cap\om_0}\fiint_{Z\in\frac12 B_Y}\frac{\abs{\nabla^2 G(Z)}^2}{G(Z)^2}\delta(Z)^3dZ dY\\
    =\iint_{Z\in B(x,2r)\cap\om_0}\frac{\abs{\nabla^2 G(Z)}^2}{G(Z)^2} \delta(Z)^3\br{\iint_{|Y-Z|<\delta(Y)/4}\delta(Y)^{-n}dY}dZ
\end{multline*}
by Fubini's theorem. Note that $|Y-Z|<\delta(Y)/4$ implies that $\frac45\delta(Z)\le\delta(Y)\le\frac43\delta(Z)$ and that $|Y-Z|\le \frac13\delta(Z)$. Hence, 
\[
\iint_{|Y-Z|<\delta(Y)/4}\delta(Y)^{-n}dY\lesssim \delta(Z)^{-n}\iint_{B(Z,\frac{\delta(Z)}{3})}dY\lesssim1,
\]
from which it follows that for all $x\in\pom_0$ and $r>0$,
\[
\iint_{B(x,r)\cap\om_0}\sup_{Z\in\frac12 B_Y}\abs{\frac{\delta(Z)^2\nabla^2 G(Z)}{G(Z)}}^2\frac{dY}{\delta(Y)}\lesssim \iint_{ B(x,2r)\cap\om_0}\frac{\abs{\nabla^2 G(Z)}}{G(Z)}^2\delta(Z)^3dZ\lesssim Mr^{n-1}
\]
by \eqref{eq.D2GCM}. The lemma follows. \ep

\section{The change of variables---the heart of the matter}

\subsection{Set-up estimates on Lipschitz graph domains}
Throughout this section, $\om_0$ is an unbounded domain in $\Rn$, and $G$ is the Green function for the Laplacian with pole at infinity in $\om_0$.

\begin{definition}
We say that the domain $\Omega_0$ satisfies the $\nabla G$'s non-degeneracy condition---called \eqref{H1}---if there exists $C_1>0$ such that
\begin{equation} \label{H1} \tag{H1}
G(X) \leq C_H \delta(X) |\nabla G(X)| \qquad \text{ for all } X\in \Omega_0.
\end{equation}
We say that the domain $\Omega$ satisfies the preferred direction condition---called \eqref{H2}---if there exists an orthonormal basis $\{e_1,\dots,e_n\}$ and a constant $C_2>0$ such that 
\begin{equation} \label{H2} \tag{H2}
\left<\nabla G(X),e_n \right> \geq (C_2)^{-1} |\nabla G(X)|\quad \text{ for } X\in \Omega_0.
\end{equation}
\end{definition}

\begin{lemma} \label{lemSL=>H}
Let $\Omega_0:=\{ (x,t) \in \R^n,\, t> g(x)\}$ be a Lipschitz graph domain. Then $\Omega_0$ satisfies \eqref{H1}--\eqref{H2}.
\end{lemma}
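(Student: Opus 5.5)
We take $e_n$ to be the vertical direction and prove \eqref{H2} first; \eqref{H1} then follows immediately. The idea is to exploit the translation invariance of $\Omega_0$ along upward cone directions. Let $\Gamma:=\{(y,s)\in\R^{n-1}\times\R:\ s> M|y|\}$ be the vertical cone of aperture determined by $M$. Since $g$ is $M$-Lipschitz, $\Omega_0+V\subset\Omega_0$ for every $V\in\overline\Gamma$.

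\textbf{Monotonicity step.} Fix $V\in\overline\Gamma$ and set $u_V(X):=G(X+V)$. Then $u_V$ is harmonic and positive on $\Omega_0$. It is also nonnegative on $\partial\Omega_0$, where $G$ vanishes. We claim $u_V\ge G$ in $\Omega_0$. To prove it, we use the characterization of $G$ in Lemma~\ref{lem GrEm_infty} as a limit of normalized Green functions $G(\cdot,X_k)/G(Z_0,X_k)$. Since $G(\cdot,X_k)$ is the Green function of $\Omega_0$, and $G(\cdot+V,X_k+V)$ is the Green function of the smaller domain $\Omega_0-V\supset\Omega_0$...

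It is cleaner to argue directly. The function $u_V-G$ is harmonic in $\Omega_0$ and nonnegative on $\partial\Omega_0$. Both functions vanish at infinity relative to their size, and by uniqueness (up to constants) of positive harmonic functions vanishing on the boundary, we get $u_V=c_V G$ whenever $u_V$ vanishes on $\partial\Omega_0$; in general it does not. So instead we use the maximum principle on truncated regions, combined with the boundary Harnack principle / comparison at infinity, to obtain $G(X+V)\ge G(X)$. An equivalent route is to note that $G(\cdot-V)$ restricted to $\Omega_0+V$ is the Green function at infinity of $\Omega_0+V\subset\Omega_0$, which is dominated by a multiple of $G$. Normalizations then match via the behavior at infinity, since both grow like the same power along the axis.

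\textbf{Directional derivatives.} Differentiating the monotonicity gives $\langle\nabla G(X),V\rangle\ge0$ for all $V\in\overline\Gamma$. This by itself only gives nonnegativity. For the quantitative bound, take a slightly smaller cone $\Gamma'$ (aperture $2M$, say), whose directions stay uniformly inside $\Gamma$. Write $e_n=\frac12(V_1+V_2)$-type decompositions: for any unit $w\perp e_n$, the vectors $e_n\pm w/(2M)$ lie in $\overline\Gamma$. Hence
\[|\langle\nabla G,w\rangle|\le 2M\langle\nabla G,e_n\rangle,\]
and summing over an orthonormal basis of $e_n^\perp$ gives $|\nabla G|\le C(n,M)\langle\nabla G,e_n\rangle$, which is \eqref{H2}.

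\textbf{Proof of \eqref{H1}.} Fix $X$ and let $Y:=X-\tfrac12\delta(X)e_n$. Then $G(X)-G(Y)=\int\langle\nabla G,e_n\rangle$ along the segment. By Harnack, or by \eqref{H2}, it suffices to show $G(Y)\le (1-c)G(X)$. Using the comparison $G(Z)\approx\omega(\Delta)\delta^{2-n}$ from Lemma~\ref{lemG=om} together with the Hölder decay of $G$ near the boundary, we can choose the descent point $Y$ closer to $\partial\Omega_0$, at distance $\eta\delta(X)$. This gives $G(Y)\le \tfrac12 G(X)$ for small $\eta$, where $\eta$ depends only on $n$ and $M$. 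Then
\[\tfrac12G(X)\le \int_0^{\delta}|\nabla G(X-se_n)|\,ds.\]
By Harnack applied to $\langle\nabla G,e_n\rangle$, which is positive and harmonic, together with \eqref{H2}, we have $|\nabla G(X-se_n)|\lesssim|\nabla G(X)|$ along the segment for $s\le(1-\eta)\delta$. This yields $G(X)\lesssim\delta(X)|\nabla G(X)|$.

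\textbf{Main obstacle.} We expect the main obstacle to be the rigorous justification of the monotonicity $G(X+V)\ge G(X)$, in particular handling the comparison at infinity. We would first try the Green-function-limit route of Lemma~\ref{lem GrEm_infty}, choosing the poles $X_k$ to run along the vertical axis.
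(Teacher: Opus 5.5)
Your overall architecture is sound, and it is a different route from the paper's. The paper either quotes $\partial_n G\approx G/\delta$ from \cite[Theorem 11.10]{CS.AMS} and rescales, or, in Appendix~\ref{S.altpf}, proves $\partial_n G\ge 0$ through the representation $\partial_n G^Y=u_{\partial_n G^Y}-H^Y$, which relies on $L^2$ solvability and nontangential traces. Your passage from monotonicity in the cone to \eqref{H2}, by testing $e_n\pm w/(2M)\in\Gamma$, is correct. It is more direct than the paper's derivation, which needs the lower bound $\partial_n G\gtrsim G/\delta$ together with Lemma~\ref{lemNG<G/d}. Your proof of \eqref{H1} (boundary H\"older decay on a vertical segment, then Harnack chains for the nonnegative harmonic function $\partial_n G$) is essentially Step~2 of Appendix~\ref{S.altpf}.

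\textbf{The gap.} Everything rests on $G(X+V)\ge G(X)$ for $V\in\overline\Gamma$, and you do not prove it. None of your sketches establishes it:
\begin{itemize}
\item Uniqueness of positive harmonic functions vanishing on $\partial\Omega_0$ does not apply, as you note yourself.
\item The maximum principle on $\Omega_0\cap B_R$ requires controlling $G-(1+\epsilon)G(\cdot+V)$ on $\Omega_0\cap\partial B_R$, and you give no mechanism for this. Where $\delta(X)\approx|V|$ you only know $G(X+V)\approx G(X)$, with a constant not close to $1$.
\item The claim that $G(\cdot-V)$ is dominated by a multiple of $G$, with ``normalizations matching since both grow like the same power'', fails twice. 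Green functions at infinity of Lipschitz graph domains need not behave like a power. More importantly, matching growth along the axis only shows that the best constant $c$ in $G\le c\,G(\cdot+V)$ is at least $1$, whereas you need $c\le 1$.
\end{itemize}
Also, $\Omega_0-V$ is the larger domain, not the smaller one.

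\textbf{How to close it.} Use the route you mention last. Since $\Omega_0\subset\Omega_0-V$, and the Green function of $\Omega_0-V$ is $(X,Y)\mapsto G_{\Omega_0}(X+V,Y+V)$, domain monotonicity of Green functions (e.g.\ via exhaustion by bounded domains) gives $G_{\Omega_0}(X,Y)\le G_{\Omega_0}(X+V,Y+V)$ for $X,Y\in\Omega_0$. With $Y_k=(0,2^k)$,
\[
\frac{G_{\Omega_0}(X,Y_k)}{G_{\Omega_0}(Z_0,Y_k)}\le\frac{G_{\Omega_0}(X+V,Y_k+V)}{G_{\Omega_0}(Z_0,Y_k+V)}\cdot\frac{G_{\Omega_0}(Z_0,Y_k+V)}{G_{\Omega_0}(Z_0,Y_k)}.
\]
\begin{itemize}
\item By Lemma~\ref{lem GrEm_infty}, and uniqueness of $G$ (so the full sequences converge), the first two quotients tend to $G(X)$ and $G(X+V)$.
\item The last quotient tends to $1$. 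Indeed, $G_{\Omega_0}(Z_0,\cdot)$ is positive and harmonic in a ball of radius $\approx 2^k$ around the segment $[Y_k,Y_k+V]$, so Lemma~\ref{lemNG<G/d} gives $|\nabla\log G_{\Omega_0}(Z_0,\cdot)|\lesssim 2^{-k}$ there.
\end{itemize}
Hence $G(X)\le G(X+V)$. The behavior at infinity that matters is that of the finite-pole Green function near its pole, not a power law for $G$.

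With this step inserted, your proof is complete and self-contained. It is more elementary than Appendix~\ref{S.altpf}, and it gives the stronger conclusion that $G$ is monotone in a whole cone of directions.
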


This lemma follows from \cite[Theorem 11.10]{CS.AMS} and Lemma~\ref{lemNG<G/d}. In fact, we can assume without loss of generality that $g(0)=0$. Given $X_0=(x_0,t_0)\in\om_0$, by translation we can assume that $X_0=(0,t_0)$. Let $\delta_0=\delta_0(n,M)>0$ be the constant found in \cite[Theorem 11.10]{CS.AMS}, where $M$ is the Lipschitz constant of $g$. If $t_0<4\delta_0M$, then applying \cite[Theorem 11.10]{CS.AMS}\footnote{There is a typo in the statement, namely, (11.14) should be $u_{x_n}(x)\sim u(x)/\dist(x,\pom)$.} directly to $G$ gives 
\begin{equation}\label{eq.drnG}
    \dr_n G(X_0)\approx \frac{G(X_0)}{\delta(X_0)}.
\end{equation} 
If $t_0>4\delta_0M$, one gets the same estimate by applying the theorem to the harmonic function $u(X):=G\br{\frac{t_0}{2\delta_0 M}X}$ and taking $X_0=(0,2\delta_0M)$.
Then \eqref{H1} follows from \eqref{eq.drnG} immediately, while \eqref{H2} follows from \eqref{eq.drnG} and Lemma~\ref{lemNG<G/d}---recall that we write $\partial_n G$ for $\left<\nabla G(X),e_n \right>$.
\smallskip

For the sake of completeness, we provide an alternative proof of Lemma~\ref{lemSL=>H} in Appendix~\ref{S.altpf}.

\begin{theorem} \label{ThGFE2}
Let $\Omega_0:= \{(x,t)\in \R^n, \, t>g(x)\}$ be a Lipschitz graph domain. Then the Green function $G$ for $-\Delta$ with pole at infinity defined in Lemma~\ref{lem GrEm_infty} verifies 
\[ \delta \nabla\left( \frac{\nabla G}{|\nabla G|}\right) \in \CMs(M),\]
and
\[\nabla\left( \frac{G}{|\nabla G|}\right) - \frac{\nabla G}{|\nabla G|} \in \CMs(M),\]
where $M$ depends on the Lipschitz constant of $g$.
\end{theorem}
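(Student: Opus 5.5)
Both claims reduce to pointwise computations. Once those are done, the Carleson statements follow from Theorem~\ref{ThGFE} together with the non-degeneracy \eqref{H1} of Lemma~\ref{lemSL=>H}. Write $v:=\nabla G/|\nabla G|$, which is smooth in $\Omega_0$ because $|\nabla G|>0$ by \eqref{H1}.

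\textbf{First claim.} Differentiating gives
\[
\nabla v=\frac{\nabla^2 G}{|\nabla G|}-\frac{\nabla G\,(\nabla^2G\,\nabla G)^T}{|\nabla G|^3},
\]
so pointwise $|\nabla v|\le 2|\nabla^2 G|/|\nabla G|$. By \eqref{H1}, $1/|\nabla G(Z)|\le C_H\delta(Z)/G(Z)$, hence
\[
\delta(Z)|\nabla v(Z)|\le 2C_H\,\frac{\delta(Z)^2|\nabla^2G(Z)|}{G(Z)}
\]
for every $Z\in\Omega_0$. Taking the supremum over $Z\in\frac12 B_Y$ on both sides, the $\CMs$ bound for $\delta^2|\nabla^2G|/G$ from Theorem~\ref{ThGFE} transfers directly, with constant multiplied by $4C_H^2$.

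\textbf{Second claim.} Compute
\[
\nabla\Big(\frac{G}{|\nabla G|}\Big)=\frac{\nabla G}{|\nabla G|}+G\,\nabla\big(|\nabla G|^{-1}\big)=v-\frac{G\,\nabla^2G\,\nabla G}{|\nabla G|^3}.
\]
The difference is therefore bounded by $G|\nabla^2G|/|\nabla G|^2$. Applying \eqref{H1} twice gives
\[
\frac{G|\nabla^2G|}{|\nabla G|^2}\le C_H^2\,\frac{\delta^2|\nabla^2G|}{G}
\]
pointwise. Theorem~\ref{ThGFE} again gives the $\CMs$ conclusion.

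\textbf{Where the difficulty lies.} The argument is purely algebraic once the ingredients are in place. The only delicate point is that every pointwise bound must be applied at each $Z\in\frac12B_Y$ before taking the supremum, so that the $\CMs$ (rather than merely $\CM$) condition is inherited. This works because the bounds above hold at every point of $\Omega_0$ with uniform constants.

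The genuinely hard input is Theorem~\ref{ThGFE}, the second-derivative Carleson estimate from \cite{FL23}, and it is taken as given. The non-degeneracy \eqref{H1} is what allows $|\nabla G|$ in the denominators to be replaced by $G/\delta$. Without it, $v$ would not even be defined, so Lemma~\ref{lemSL=>H} is the essential structural step. The resulting constant $M$ depends on $C_H$ and the constant in Theorem~\ref{ThGFE}, hence only on $n$ and the Lipschitz constant of $g$.
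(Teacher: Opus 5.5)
Your proposal is correct and matches the paper's proof. Both arguments bound $|\delta\nabla(\nabla G/|\nabla G|)|\le 2\delta|\nabla^2G|/|\nabla G|$ and $|\nabla(G/|\nabla G|)-\nabla G/|\nabla G||\le G|\nabla^2G|/|\nabla G|^2$ pointwise, use \eqref{H1} to dominate each by a multiple of $\delta^2|\nabla^2G|/G$, and then conclude with Theorem~\ref{ThGFE}; your explicit remark that these bounds hold at every $Z\in\frac12 B_Y$, so the $\CMs$ condition transfers, states what the paper leaves implicit.
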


\bp
This theorem is a simple consequence of Theorem \ref{ThGFE} and \eqref{H1}, the latter being given by Lemma \ref{lemSL=>H}. Indeed, \eqref{H1} guarantees that $|\nabla G|$ never becomes 0, meaning that we can differentiate $\nabla G/|\nabla G|$ without subtilities and we have
\[ \left|\delta \nabla \left(\frac{\nabla G}{|\nabla G|} \right)\right| \leq 2 \frac{\delta |\nabla^2 G|}{|\nabla G|} \lesssim \frac{\delta^2 |\nabla^2 G|}{G} \in \CMs\]
by \eqref{H1} and then Theorem \ref{ThGFE}. Similarly,
\[ \left| \nabla  \left(\frac{G}{|\nabla G|} \right) - \frac{\nabla G}{|\nabla G|}\right| \leq \frac{G|\nabla^2 G|}{|\nabla G|^2} \lesssim  \frac{\delta^2 |\nabla^2 G|}{G}  \in \CMs\]
again by \eqref{H1} and Theorem \ref{ThGFE}. The theorem follows.
\ep

We need now an orthonormal basis whose last element is $\nabla G/|\nabla G|$.
 
\begin{lemma} \label{lemdefvi}
Let $\Omega_0$ satisfy \eqref{H1}--\eqref{H2}. There exists $n$ vectors functions $v_1,\dots,v_n$ on $\Omega_0$ such that:
\begin{enumerate}[(i)]
\item For each $X\in \Omega_0$, the collection $\{v_1(X),\dots,v_n(X)\}$ is an orthonormal basis of $\R^n$.
\item The last vector function is $v_n:= \frac{(\nabla G)^T}{|\nabla G|}$, so in particular
\[v_i(X) \cdot \nabla G(X) = 0 \qquad \text{ for each } X\in \Omega \text{ and } i \in \{1,\dots,n-1\}.\]
\item There exists $C>0$ such that, for each $i\in \{1,\dots,n\}$ and $X\in\om_0$,
\[ |\nabla v_i(X)| \leq C |\nabla v_n(X)|.\]
\end{enumerate}
\end{lemma}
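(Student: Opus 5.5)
The construction uses \eqref{H2}: the unit field $v_n = (\nabla G)^T/|\nabla G|$ stays in a fixed cone around $e_n$, since $\langle v_n, e_n\rangle \geq C_2^{-1}$ everywhere on $\Omega_0$. This means a single smooth construction can complete $v_n$ to an orthonormal frame, with no topological obstruction and no need for local patching. Concretely, define $\tilde v_i := e_i - \langle e_i, v_n\rangle v_n$ for $i=1,\dots,n-1$. These are the projections of $e_1,\dots,e_{n-1}$ onto $v_n^\perp$. Then apply Gram--Schmidt to $\tilde v_1,\dots,\tilde v_{n-1}$ to obtain $v_1,\dots,v_{n-1}$. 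Properties (i) and (ii) then hold by construction, provided each Gram--Schmidt step is nondegenerate.

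The key quantitative step is to show that $\tilde v_1,\dots,\tilde v_{n-1}$ are uniformly linearly independent. The plan is to bound the Gram determinant $\det(\langle \tilde v_i,\tilde v_j\rangle)$ from below by a constant depending only on $C_2$.

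To see this, let $P$ denote the orthogonal projection onto $v_n^\perp$. For coefficients $a\in\R^{n-1}$, write $w=\sum a_i e_i$, which is orthogonal to $e_n$. Then
\[\Big|\sum_i a_i \tilde v_i\Big|^2 = |Pw|^2 = |w|^2 - \langle w, v_n\rangle^2 \geq |w|^2\big(1-|\pi(v_n)|^2\big) = |a|^2 \langle v_n,e_n\rangle^2 \geq C_2^{-2}|a|^2.\]
Hence the Gram matrix has smallest eigenvalue at least $C_2^{-2}$. It follows that the Gram--Schmidt map, viewed as a smooth function of $v_n$, is smooth with bounded derivatives on the compact set $K=\{v\in S^{n-1}: \langle v,e_n\rangle\ge C_2^{-1}\}$.

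Condition (iii) then follows from the chain rule. Each $v_i$ equals $F_i(v_n)$ for a map $F_i$ that is smooth on a neighborhood of $K$, so
\[|\nabla v_i| \le \|DF_i\|_{L^\infty(K)}\,|\nabla v_n|.\]
For $i=n$ the bound is trivial. The only regularity needed is that $v_n$ be differentiable, which holds because $G$ is smooth and $|\nabla G|>0$ by \eqref{H1}.

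The main obstacle is this uniform lower bound on the Gram determinant. Without it, the Gram--Schmidt normalizations could blow up, and the constant $C$ in (iii) would not be uniform. The computation above settles it using only \eqref{H2}, so $C=C(n,C_2)$.
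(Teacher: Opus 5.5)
Your proof is correct and follows essentially the same route as the paper. Both complete $v_n$ to an orthonormal frame by Gram--Schmidt against the fixed Cartesian vectors $e_1,\dots,e_{n-1}$, get uniform nondegeneracy from \eqref{H2}, and obtain (iii) from the chain rule because each $v_i$ is a smooth function of $v_n$. The only differences are minor: you project onto $v_n^\perp$ and then orthogonalize in increasing order, where the paper uses a decreasing induction, and your Cauchy--Schwarz lower bound $C_2^{-2}$ on the Gram matrix is a cleaner substitute for the paper's explicit infimum computation giving $|\tilde v_i|\ge c$.
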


\bp
The result by itself is not surprising, since we can simply see it as the variant of the Gram-Schmidt process, and can probably already be found in the literature. For completeness, we sketch the argument.

Let $\{e_1,\dots,e_n\}$ be the Cartesian orthonormal basis from condition \eqref{H2}. For any fixed $X\in\om_0$, we define the vectors $v_i(X)$ by a decreasing induction in $i$ as follows. For $i\in \{n-1,\dots,1\}$, 
\begin{equation} \label{defvi}
\tilde v_i(X):= e_{i} - \sum_{i<k\le n} \langle e_{i}, v_k(X)\rangle v_k(X), \quad \text{ and } \quad  v_i(X) := \dfrac{\tilde v_i(X)}{\abs{\tilde v_i(X)}}.
\end{equation}
The condition \eqref{H2} guarantees that 
$\set{e_1,\dots,e_{n-1},v_n(X)}$ is linearly independent, and so the construction automatically returns an orthonormal basis. Moreover, \eqref{H2} says that $\left< v_n,e_n\right> \geq c$ for some $c\in(0,1)$ independent of $X$. By the Parseval's identity,
\begin{equation}\label{eq.ei.vn}
    \sum_{i=1}^{n-1} |\left< e_i, v_n(X) \right>|^2 \le 1-c^2.
\end{equation}
We want to prove that 
\begin{equation}\label{eq.tildvi-nd}
    |\tilde v_i(X)| \geq c\quad\text{for  }X\in\om_0,\, i\in\set{1,\dots,n-1}.
\end{equation}
 To this objective, note that by the Gram-Schmidt process, $|\tilde{v}_i|$ is the distance from $e_i$ to the linear space spanned by $\{e_{i+1}, \cdots, e_{n-1}, v_n \}$. Thus
\begin{equation*}
\begin{aligned}
    |\tilde{v}_i|^2 & = \inf_{\alpha_k \in \R, i+1\le k\le n} \left< e_i - \sum_{k=i+1}^{n-1} \alpha_k e_k - \alpha_n v_n,\ e_i - \sum_{k=i+1}^{n-1} \alpha_k e_k - \alpha_n v_n \right> \\
    & = \inf_{\alpha_k \in \R, i+1\le k\le n} \Big\{ 1 + \sum_{k=i+1}^n \alpha_k^2 - 2\alpha_n \left< e_i, v_n \right> - \sum_{k=i+1}^{n-1} 2\alpha_k \alpha_n \left< e_k, v_n\right> \Big\}.
    \end{aligned}
\end{equation*}
Using $|2\alpha_k \alpha_n \left< e_k, v_n\right> | \le \alpha_k^2 + \alpha_n^2|\left< e_k, v_n\right>|^2$ and $|2\alpha_n \left< e_i, v_n \right>| \le (1-c^2) + (1-c^2)^{-1}\alpha_n^2 |\left< e_i, v_n \right>|^2$, we then have
\begin{equation*}
\begin{aligned}
    |\tilde{v}_i|^2 & \ge \inf_{\alpha_n \in \R} \Big\{ 1 + \alpha_n^2 - (1-c^2) - (1-c^2)^{-1} \alpha_n^2 |\left< e_i, v_n \right>|^2 - \alpha_n^2 \sum_{k=i+1}^{n-1} |\left< e_k, v_n\right>|^2 \Big\} \\
    & \ge\inf_{\alpha_n \in \R} \Big\{ c^2 + \alpha_n^2 - \alpha_n^2 (1-c^2)^{-1} \sum_{k=i}^{n-1} |\left< e_k, v_n\right>|^2 \Big\} \\
    & \ge c^2,
    \end{aligned}
\end{equation*}
where we have used \eqref{eq.ei.vn} in the last inequality.

It remains to justify $(iii)$. In fact, the uniform non-degeneracy of $\tilde v_i$ \eqref{eq.tildvi-nd} guarantees that \eqref{defvi} is well-defined and thus, each $v_i$ is a $C^\infty$ function of $v_n$. Then the chain rule implies  that $|\nabla v_i| \lesssim |\nabla v_n|$. The lemma follows.
\ep

\begin{corollary} \label{corviCM}
Let $\Omega_0$ be a Lipschitz graph domain. Then the construction of $\{v_1,\dots,v_n\}$ given by Lemma \ref{lemdefvi} satisfies:
\[ \delta \nabla v_i \in \CMs(M),\]
where $M$ depends on the Lipschitz constant of $\partial \Omega_0$.
\end{corollary}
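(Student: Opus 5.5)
The plan is to combine Lemma~\ref{lemdefvi}(iii) with the first estimate of Theorem~\ref{ThGFE2}. By Lemma~\ref{lemSL=>H}, a Lipschitz graph domain $\Omega_0$ satisfies \eqref{H1}--\eqref{H2}. Hence the construction of Lemma~\ref{lemdefvi} applies and yields an orthonormal frame $\{v_1,\dots,v_n\}$ with $v_n=(\nabla G)^T/|\nabla G|$.

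The case $i=n$ is exactly the first assertion of Theorem~\ref{ThGFE2}, namely $\delta\nabla(\nabla G/|\nabla G|)\in\CMs(M)$. For $i\le n-1$, Lemma~\ref{lemdefvi}(iii) gives the pointwise bound $|\nabla v_i(Z)|\le C|\nabla v_n(Z)|$ for every $Z\in\Omega_0$. The constant $C$ depends only on the lower bound $c$ in \eqref{eq.tildvi-nd}, hence only on the constant in \eqref{H2}, hence only on $n$ and the Lipschitz constant of $g$. Multiplying by $\delta(Z)$ and taking the supremum over $Z\in\frac12B_Y$ gives
\[
\sup_{Z\in\frac12 B_Y}|\delta(Z)\nabla v_i(Z)|\le C\sup_{Z\in\frac12 B_Y}|\delta(Z)\nabla v_n(Z)| .
\]
Squaring and integrating against $dY/\delta(Y)$ over $B(x,r)\cap\Omega_0$, the Carleson bound for $v_n$ transfers directly to $v_i$, with constant multiplied by $C^2$.

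The only point needing care is the justification of (iii), since its proof is only sketched. I would make it explicit as follows. Each $v_i$ is obtained from $v_n$ by the map $F_i$ defined in \eqref{defvi}. This map is smooth on the compact set $K_c:=\{v\in\mathbb S^{n-1}:\langle v,e_n\rangle\ge c\}$, because there the denominators satisfy $|\tilde v_i|\ge c$. It follows that $|DF_i|$ is bounded on $K_c$ by a constant depending only on $c$ and $n$. Since $v_n(X)\in K_c$ for all $X$ by \eqref{H2}, the chain rule $\nabla v_i=DF_i(v_n)\nabla v_n$ gives the pointwise bound. The smoothness of $v_n$ itself is guaranteed by \eqref{H1}, which ensures $\nabla G\ne0$.

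No real obstacle is expected: all the analytic content sits in Theorem~\ref{ThGFE2} (and ultimately in Theorem~\ref{ThGFE}). The only thing to check is that the constants are uniform in $X$, which is exactly what \eqref{H2} provides.
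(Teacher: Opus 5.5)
Your proposal is correct and follows the paper's proof, which is simply the combination of Lemma~\ref{lemdefvi}(iii) with the first estimate of Theorem~\ref{ThGFE2}. Your chain-rule justification of (iii), using smoothness of the Gram--Schmidt map on the compact set $\{v\in\mathbb S^{n-1}:\langle v,e_n\rangle\ge c\}$, matches the paper's sketch of (iii) in more detail.
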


\bp
Simple consequence of Lemma \ref{lemdefvi} and Theorem \ref{ThGFE2}.
\ep

\subsection{Quantifying small Lipschitz perturbations of Lipschitz graph domains.}\label{S.v}

In the rest of the section, \[\Omega_0:=\{ (x,t) \in \R^{n-1}\times\R,\, t> g(x)\}\] is a Lipschitz graph domain, with a (possibly large) constant $M$. $G$ is the Green function with pole at $\infty$ for the Laplacian on $\om_0$.  

\medskip

The following notation will be used for the rest of the article. First, $v_n := \frac{(\nabla G)^T}{|\nabla G|}$ as in Lemma \ref{lemdefvi}. Then, we define
\begin{equation} \label{deftXyX}
t(X) := \frac{G(X)}{|\nabla G(X)|}, \qquad  y(X) := \pi\Big( X - \frac{G(X)(\nabla G(X))^T}{|\nabla G(X)|^2} \Big) = \pi\Big( X - t(X)v_n(X)\Big).
\end{equation}
where we recall that $\pi$ is the orthogonal projection on the first $(n-1)$-coordinates.
We emphasize that in our definition, $t:\om_0\to\R$ is a function and should not be confused with the variable $t\in\R$. We deliberately choose this notation, as we wish to view the function as playing the same role, in $\om_0$, of the special variable $t$ in $\Rn_+$. In fact, by Lemma \ref{lemNG<G/d}, \eqref{H1}, Theorem \ref{ThGFE2}, and  Corollary \ref{corviCM}, we have that 
\begin{equation} \label{t=d}
t\approx \delta := \dist(.,\partial \Omega_0);
\end{equation}
\begin{equation} \label{NvnisCM}
t \nabla v_i \in \CMs(CM) \quad \text{for }i\in\set{1,\dots,n},
\end{equation}
and
\begin{equation} \label{Nt-vnisCM}
\nabla t - v_n^T \in \CMs(CM).
\end{equation}

\begin{lemma}\label{lemDor}
Let $\varphi :\, \R^{n-1} \to \R$ be a Lipschitz function. Define $\beta_\varphi(x,r)$ as the $\beta$-number
\[\beta_\varphi(x,r) := \frac1r \inf_{a\in\Rn, B\in\R^{(n-1)\times n}} \fint_{B(x,r)} |\vp(z)-a-(z-x)B| \, dz \qquad \text{ for } x\in \R^{n-1} \text{ and } r >0.\]
Then 
\[X \mapsto \beta_\varphi (y(X),t(X)) \in CM_{sup}(C_M\|\nabla \varphi\|_\infty),\]
where $t(X)$ and $y(X)$ are as in \eqref{deftXyX}, and where $C>0$ is independent of $\varphi$ (depends only on the dimension $n$, and the Lipschitz constant $M$ of $g$).
\end{lemma}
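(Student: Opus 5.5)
The plan is to reduce the statement to Dorronsoro's theorem. For a Lipschitz function $\varphi$ on $\R^{n-1}$, Dorronsoro's theorem gives
\[
\int_{B(x_0,R)}\int_0^{R} \beta_\varphi(z,r)^2 \,\frac{dr}{r}\,dz \le C \|\nabla\varphi\|_\infty^2 R^{n-1}.
\]
Indeed, the $L^1$ $\beta$-numbers are square-function controlled by $\|\nabla \varphi\|_{BMO}^2 \lesssim \|\nabla\varphi\|_\infty^2$. The statement is written with $C_M\|\nabla\varphi\|_\infty$; since $\beta_\varphi$ is homogeneous of degree one in $\varphi$, the natural Carleson constant is $\|\nabla\varphi\|_\infty^2$. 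I will aim for that and read the statement accordingly.

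So it suffices to transfer this Carleson estimate on $\R^{n-1}\times(0,\infty)$ to $\Omega_0$ through the map $X\mapsto (y(X),t(X))$. The key inputs are \eqref{t=d}, $t\approx\delta$, and Lemma~\ref{lemNG<G/d}. These give $|X - t(X)v_n(X)-(y(X),g(y(X)))|\lesssim \delta(X)$, or more simply $|y(X)-\pi(X)|\le t(X)\lesssim \delta(X)$.

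\textbf{Step 1: local constancy on Whitney balls.} For $Z\in \frac12 B_Y$, we have $t(Z)\approx t(Y)\approx\delta(Y)$ and $|y(Z)-y(Y)|\lesssim \delta(Y)$. Hence $B(y(Z),t(Z))\subset B(y(Y),Ct(Y))$, with comparable radii. Since the infimum in $\beta$ is monotone under enlargement of the ball (up to the volume ratio), this gives $\beta_\varphi(y(Z),t(Z))\lesssim \beta_\varphi(y(Y),Ct(Y))$. Next I average over $z\in B(y(Y),t(Y))$ and $r\in(Ct(Y),2Ct(Y))$, using the same containment. This yields
\[
\sup_{Z\in\frac12B_Y}\beta_\varphi(y(Z),t(Z))^2\lesssim \fint_{B(y(Y),t(Y))}\fint_{Ct(Y)}^{2Ct(Y)}\beta_\varphi(z,r)^2\,dr\,dz .
\]

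\textbf{Step 2: Fubini.} I integrate the bound from Step 1 over $Y\in B(x,R)\cap\Omega_0$ against $dY/\delta(Y)$ and exchange the order of integration. For a fixed $(z,r)$, the set of contributing $Y$ satisfies $\delta(Y)\approx r$ and $|\pi(Y)-z|\lesssim r$. This is a Whitney-type region of volume $\lesssim r^{n}$. The weight is $\delta(Y)^{-1}t(Y)^{-(n-1)}t(Y)^{-1}\approx r^{-n-1}$, so the $Y$-integral contributes $\lesssim r^{-1}$. Thus the total is bounded by
\[
\int_{B(\pi(x),CR)}\int_0^{CR}\beta_\varphi(z,r)^2\frac{dr}{r}\,dz\lesssim \|\nabla\varphi\|_\infty^2R^{n-1},
\]
by Dorronsoro's theorem. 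The $\delta(Y)\lesssim R$ and $|\pi(Y)-\pi(x)|\lesssim R$ constraints come from $Y\in B(x,R)$.

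\textbf{Main obstacle.} The delicate point is the geometric bookkeeping in Step 1. I need $y(Z)$ to vary by at most $C\delta(Y)$ across the Whitney ball, with $C$ depending only on $n$ and $M$. This follows from $|y(Z)-\pi(Z)|\le t(Z)\lesssim\delta(Z)$ without any use of derivatives of $y$. The Carleson information \eqref{Nt-vnisCM} is not needed here; only \eqref{H1} and Lemma~\ref{lemNG<G/d} are used, through $t\approx\delta$. I also need the elementary monotonicity of $\beta$ under comparable nested balls, and the citation of Dorronsoro's theorem in its $L^1$ form.
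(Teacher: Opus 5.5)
Your proposal is correct and follows essentially the same route as the paper. Both arguments start from Dorronsoro's Carleson estimate. Both dominate $\beta_\varphi(y(X),t(X))$ by an average of $\beta_\varphi(z,s)$ over $|z-y(X)|\lesssim t(X)$, $s\approx t(X)$, using nested balls of comparable radii. Both then apply Fubini, charging each $(z,s)$ a region of volume $\lesssim s^n$ around $(z,g(z))$.

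Your version is slightly more complete in two respects. It explicitly handles the supremum over $\frac12 B_Y$ required by $\CMs$, whereas the paper only bounds the plain Carleson integral. It also keeps the homogeneity-correct constant $\|\nabla\varphi\|_\infty^2$, which implies the stated $C_M\|\nabla\varphi\|_\infty$ whenever $\|\nabla\varphi\|_\infty\lesssim 1$, as in the later application to $h$.
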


\bp
We start from the Dorronsoro theorem (\cite{Dor85}), which states that $|\beta_\varphi(y,t)|^2 dy\, dt/t$ is a Carleson measure on $\R^{n-1}\times(0,\infty)$, with Carleson norm smaller than $C\|\nabla\varphi\|_{L^\infty}$, that is,
 \begin{equation}\label{eq.Dorr}
     \int_{B_{\R^{n-1}}(x,r)}\int_0^r|\beta_\varphi(z,s)|^2\frac{ds}{s}dz\le C_{Dor}\|\nabla\varphi\|_{\infty} r^{n-1} \qquad \text{ for $x\in \R^{n-1}$ and $r>0$.}
 \end{equation}
 We want to prove that 
\begin{equation}\label{Dor.claim}
     \int_{B(x,r) \cap \Omega_0} |\beta_\varphi(y(X),t(X))|^2\frac{dX}{\delta(X)}  \le C\|\nabla\varphi\|_{\infty} r^{n-1} \qquad \text{ for $x\in \partial \Omega_0$ and $r>0$.}
\end{equation}
The argument is straightforward and relies on Fubini's theorem. The details are given below. 

\medskip

Take $X \in \Omega_0$, $s\in [t(X),2t(X)]$, and 
$z\in B_{\R^{n-1}}(\pi(X),t(X))$. From the expression of $y(X)$ in \eqref{deftXyX}, one can see that 
\[|y(X) - \pi(X)| \leq \pi(t(X)v_n(X)) \leq t(X),\]
hence $|z-y(X)| \leq 2t(X) \leq 2s$, and then that
\begin{equation}\label{Dor.Ball_inc}
    B(y(X),t(X)) \subset B(z,3s).
\end{equation} 
Observe that the function $(y,t)\mapsto \beta_{\vp}(y,t)$ 
enjoys the following property: for $(y,t)$ and $(y',t')\in\R^{n-1}\times(0,\infty)$ satisfying the inclusion of balls $B(y',t')\subset B(y,t)$ and $t'\ge ct$ for some $c>0$, we have $\beta_{\vp}(y',t')\le C_{n,c}\beta_{\vp}(y,t)$.
So we deduce from \eqref{Dor.Ball_inc} that 
\[\beta_\varphi(y(X),t(X)) \lesssim \beta_\varphi(z,3s) \quad \text{ for $z\in B_{\R^{n-1}}(\pi(X),t(X))$ and $s\in [t(X),2t(X)]$},\] and so, for all $X\in \Omega_0$
\begin{multline*}
 \beta_\varphi((y(X),t(X)) \lesssim t(X)^{1-n} \int_{B(\pi(X),t(X))} \int_{t(X)}^{2t(X)} \beta_\varphi(z,3s) \, \frac{ds}{s} \, dz \\
 \lesssim \left( t(X)^{1-n} \int_{B(\pi(X),t(X))} \int_{3t(X)}^{6t(X)} |\beta_\varphi(z,s)|^2 \, \frac{ds}{s} \, dz \right)^\frac12.
\end{multline*}

\medskip

Take now $x\in \partial \Omega_0$ and $r>0$. By Fubini's theorem,
\begin{multline*}
\int_{B(x,r) \cap \Omega_0} |\beta_\varphi(y(X),t(X))|^2\frac{dX}{\delta(X)} \\\lesssim \int_{B(x,r) \cap \Omega_0} t(X)^{1-n} \left( \int_{B(\pi(X),t(X))} \int_{3t(X)}^{6t(X)} |\beta_\varphi(z,s)|^2 \, \frac{ds}{s} \, dz \right) \frac{dX}{\delta(X)}  \\
= \int_{\R^{n-1}} \int_0^\infty |\beta_\varphi(z,s)|^2 \frac{ds}{s} \, dz \Big( \underbrace{\int_{B(x,r) \cap \Omega_0} \frac{t(X)^{1-n}}{\delta(X)} \1_{B(\pi(X),t(X))}(z) \1_{[3t(X),6t(X)]}(s) \, dX}_{=:f(z,s)} \Big).
\end{multline*}
We claim that 
\begin{equation} \label{Dor.claim2}
f(z,s) \lesssim \1_{B(\pi(x),C_Mr)}(z) \1_{(0,C_M r)}(s).
\end{equation}
for some $C_M \geq 2$ independent of $\varphi$, $z$ and $s$. Once the claim \eqref{Dor.claim2} is established, we deduce that
\begin{multline*}\int_{B(x,r) \cap \Omega_0} |\beta_\varphi(y(X),t(X))|^2\frac{dX}{\delta(X)} \lesssim \int_{B(\pi(x),C_M r)} \int_0^{C_M r} |\beta_\varphi(z,s)|^2 \frac{ds}{s} \, dz \\ \leq C_{Dor} \|\nabla \varphi\|_{\infty} (C_M r)^{n-1} \lesssim \|\nabla \varphi\|_{\infty} r^{n-1},
\end{multline*}
meaning that \eqref{Dor.claim} holds as desired for the lemma.

\medskip

It remains to prove \eqref{Dor.claim2}. First, we observe that $f(z,s) \neq 0$ only if there exists $X\in B(x,r)\cap \Omega_0$ such that $|z-\pi(X)| \leq t(X)$ and $s \in [3t(X),6t(X)]$.
By \eqref{t=d}, we have then 
\[ s \leq 6t(X) \lesssim \delta(X) \leq r\]
and 
\[|z- \pi(x)| \leq |z-\pi(X)| + |\pi(X-x)| \leq t(X) + r \lesssim r,\]
meaning that there indeed exists $C_M\geq 2$ such that $f(z,s) \equiv 0$ outside of $B(\pi(x),C_Mr) \times (0,C_Mr)$. 

Note also that by \eqref{t=d} and the fact that $t(X) \approx s$, 
\[f(z,s) \lesssim s^{-n} |\{X\in \Omega_0, \, z\in B(\pi(X),t(X)), \, s\in [3t(X),6t(X)]\}|.\]
However, $z\in B(\pi(X),t(X))$ and $s\in [3t(X),6t(X)]$ implies that 
\begin{equation}\label{Dor.piX-z}
    |\pi(X) - z| \leq t(X) \leq \frac{s}{3}
\end{equation} 
and 
\[ |X - (\pi(X),g(z))| \leq |X - (\pi(X),g(\pi(X)))| + |g(z) - g(\pi(X))| \leq M\delta(X) + M |\pi(X) - z|\]
since $g$ is $M$-Lipschitz.
By invoking \eqref{t=d} again, the latter bound can be continued as follows
\[ |X - (\pi(X),g(z))| \le M\delta(X) + M \frac s3 \lesssim s.\]
This together with \eqref{Dor.piX-z} gives that 
\[
\abs{X-(z,g(z))}\le |\pi(X)-z|+|X - (\pi(X),g(z))|\lesssim s.
\]
It means that there exists $C$ such that
\[\{X\in \Omega_0, \, z\in B(\pi(X),t(X)), \, s\in [3t(X),6t(X)]\} \subset B((z,g(z)),Cs),\]
and thus
\[f(z,s) \lesssim s^{-n} |B((z,g(z)),Cs)| \lesssim 1.\]
The claim \eqref{Dor.claim2} and the lemma follows.
\ep

\medskip

\medskip

We set now $\Omega:= \Phi(\Omega_0)$, where $\Phi: \R^{n} \to \R^n$ is a bi-Lipschitz map satisfying $\|\nabla \Phi - I\|_{\infty}\leq \epsilon$, where $\epsilon>0$ will be small. We want to construct a ``better'' bi-Lipschitz change of variable $\rho$ from $\Omega_0$ to $\Omega$, which preserves the structure of $\Omega_0$. By preserving the structure of $\Omega_0$, we mean that at least the Jacobian $\nabla \rho$ is close to identity, in the sense that the errors are Carleson measures on $\om_0$ with small constant.

\medskip

We first need a mollifier. We let $\eta \in C^\infty_0(\R^{n-1})$ be a {\bf radial} function such that $\eta \geq 0$, $\int_{\R^{n-1}} \eta \, dx = 1$, and $\eta \equiv 0$ outside $B_{\R^{n-1}}(0,1)$. Then, for $t>0$, we define $\eta_t(x) := t^{1-n} \eta(x/t)$.

\medskip

We view $\om$ as a perturbation of $\om_0$, and the quantity that we want to control is $\lambda: \om_0\to\Rn$ defined by
\begin{equation} \label{deflambda}
\lambda(X) := \int_{\R^{n-1}}  h(z) \, \eta_{t(X)}\big(y(X) - z\big) \, dz \quad\text{for }X\in\om_0,
\end{equation}
where $h$ is the displacement
\begin{equation}\label{defh}
h(z):= \Phi(z,g(z)) - (z,g(z)).
\end{equation}
Note that $h:\R^{n-1}\to \Rn$ is Lipschitz, since
\begin{equation} \label{heLip}
    |\nabla h(z)| \leq |\nabla \Phi((z,g(z)) - I| \sqrt{1+|\nabla g(z)|^2} \leq C_M \epsilon.
\end{equation}

\medskip

The main result of this subsection is:

\begin{theorem} \label{Thlambda}
Let $\Omega_0:=\{ (x,t) \in \R^n,\, t> g(x)\}$, where $g:\R^{n-1}\to\R$ is a $M$-Lipschitz function, and let $\Phi:\, \R^n \to \R^n$ be a bi-Lipschitz change of variable satisfying $\|\nabla \Phi - I\|_\infty \leq \epsilon$. Then the quantity $\lambda$ defined in \eqref{deflambda}--\eqref{deftXyX} has the following properties:
\begin{enumerate}[(i)]
\item $\|\nabla \lambda\|_\infty \leq C_M \epsilon$.
\item $\partial_{v_n} \lambda \in \CMs(C_M\epsilon)$.
\item There exists a decomposition $\nabla \lambda  =: \mathbf{B} + \mathbf{C}$ verifying $\|\mathbf B\|_\infty \leq C_M\epsilon$, $v_n \mathbf B = \mathbf{0}_{\R^n}$, and $|\mathbf{C}| + t|\nabla \mathbf{B}| \in \CMs(C_M\epsilon)$.
\end{enumerate}
The constant $C_M>0$ in (i), (ii) and (iii) is independent of $\epsilon$, meaning that it depends only on $M$, the dimension $n$, the choice of $\eta$, and the constant in \eqref{H1}. 
\end{theorem}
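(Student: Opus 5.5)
We write $\lambda(X)=\Lambda(y(X),t(X))$, where
\[
\Lambda(y,s):=\int_{\R^{n-1}} h(z)\,\eta_s(y-z)\,dz
\]
is the standard mollification of $h$ on $\R^{n-1}\times(0,\infty)$. By the chain rule,
\[
\nabla\lambda = \nabla_y\Lambda(y(X),t(X))\,\nabla y + \partial_s\Lambda(y(X),t(X))\,\nabla t .
\]
The plan is to control the two factors separately. The mollified side is handled by the Lipschitz bound \eqref{heLip} together with Lemma~\ref{lemDor}. The geometric side $\nabla y,\nabla t$ is handled by \eqref{t=d}, \eqref{NvnisCM} and \eqref{Nt-vnisCM}.

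\textbf{Step 1: the mollified side.} We have $|\nabla_y\Lambda|\le\|\nabla h\|_\infty\le C_M\epsilon$. Since $\eta$ is radial with $\int\eta=1$, it annihilates affine functions in the $s$-derivative: $\partial_s\int (a+(z-y)B)\eta_s(y-z)\,dz=0$. Subtracting the best affine approximant therefore gives
\[
|\partial_s\Lambda(y,s)|\lesssim \beta_h(y,s).
\]
Applying Lemma~\ref{lemDor} to each component of $h$ then yields $\partial_s\Lambda(y(X),t(X))\in\CMs(C_M\epsilon)$, using that $\beta_h(y,s)\lesssim\|\nabla h\|_\infty$ to pass from $\beta^2$ bounds to a constant linear in $\epsilon$. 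Similarly, $s\nabla_y^2\Lambda$ and $s\nabla_y\partial_s\Lambda$ are bounded by $\beta_h$-type averages at comparable scales, so they are also in $\CMs(C_M\epsilon)$.

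\textbf{Step 2: the geometric side.} From \eqref{deftXyX},
\[
\nabla(X-tv_n)=I-v_n^T\nabla t - t\nabla v_n .
\]
By \eqref{Nt-vnisCM} and \eqref{NvnisCM}, this equals the projection $P:=I-v_n^Tv_n$ plus a $\CMs(CM)$ error $E$. Hence $\nabla y = P\pi^T + E'$, and $\nabla t = v_n^T + E''$, with $E',E''\in\CMs$ and all terms bounded, using \eqref{H1}, \eqref{t=d} and Lemma~\ref{lemNG<G/d}.

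\textbf{Step 3: conclusions.} Part (i) is immediate from boundedness of all factors. For (iii), set
\[
\mathbf B:=P\,\pi^T\nabla_y\Lambda(y(X),t(X)),
\]
in the matrix convention where rows are indexed by derivative directions; then $v_n\mathbf B=0$ because $v_nP=0$. Put $\mathbf C:=\nabla\lambda-\mathbf B$. Then $\mathbf C$ consists of $\partial_s\Lambda\,\nabla t$ plus $E'\nabla_y\Lambda$. The first term is small Carleson by Step 1. The second has size $|E'|\,C_M\epsilon$, and squaring gives a Carleson norm of order $\epsilon^2 CM\le C_M\epsilon$.

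For $t|\nabla\mathbf B|$, differentiating produces two kinds of terms. The terms $t\nabla v_n\cdot\nabla_y\Lambda$ are handled by \eqref{NvnisCM}, again with an extra $\epsilon$ factor. The terms $t\nabla_y^2\Lambda\cdot\nabla y$ and $t\nabla_y\partial_s\Lambda\cdot\nabla t$ are handled by Step 1 together with \eqref{t=d}.

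Part (ii) follows from (iii): $\partial_{v_n}\lambda=v_n\nabla\lambda=v_n\mathbf C$.

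\textbf{Main obstacle.} The delicate point is Step 1 for the second derivatives of $\Lambda$, and making sure the sup over $\frac12B_Y$ in $\CMs$ is handled. Control by $\beta$-numbers at scale $s$ must be uniform under perturbations of $(y,s)$ at comparable scale, and this is exactly the monotonicity property of $\beta_\varphi$ already used in the proof of Lemma~\ref{lemDor}. A second point needing care is linearity in $\epsilon$: products of a $\CMs(CM)$ factor with a factor bounded by $\epsilon$ give Carleson norm of order $\epsilon^2$. These must be tracked so that every Carleson constant is at most $C_M\epsilon$ for $\epsilon\le1$.
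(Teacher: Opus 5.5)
Your proposal is correct and takes essentially the same route as the paper. The paper also splits $\nabla\lambda$ by the chain rule into a $\nabla t\,\partial_s$ part and a $\nabla y\,\nabla_y$ part, chooses $\mathbf B=(I_{n\times(n-1)}-v_n^T\pi(v_n))\nabla_y\Lambda$ (exactly your $P\pi^T\nabla_y\Lambda$), and bounds $\partial_s\Lambda$, $t\nabla_y^2\Lambda$, $t\nabla_y\partial_s\Lambda$ by $\beta_h(y,t)$ using the radial symmetry of $\eta$ and Lemma~\ref{lemDor}; the one difference is that you get (ii) directly from $v_n\mathbf B=0$ rather than through the paper's separate Step 2, which is a harmless shortcut (and the linearity in $\epsilon$ already follows from Lemma~\ref{lemDor} with \eqref{heLip}, so your extra appeal to $\beta_h\lesssim\|\nabla h\|_\infty$ is not needed).
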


\bp {\bf Step 0: Preliminaries.} 
In the proof, we write $t$ for $t(X)$, $y$ for $y(X)$, $v_n$ for $v_n(X)$, and $G$ for $G(X)$; this considerably lightens the notation. Now, \eqref{deflambda} becomes
\[\lambda(X):= \int_{\R^{n-1}} h(z) \eta_{t}\big(y - z\big) \, dz.\]
Building on estimates \eqref{NvnisCM}, \eqref{Nt-vnisCM} and the observation that 
\[
\nabla y=\pi(\nabla X)-\pi(\nabla(tv_n))=I_{n\times(n-1)}-\pi\br{(\nabla t-v_n^T)v_n}-\pi(v_n^Tv_n)-\pi(t\nabla v_n),
\]
where $I_{n\times (n-1)}$ is the $n\times (n-1)$ matrix that takes the value $1$ on the diagonal and 0 elsewhere, we also have
\begin{equation} \label{Ny-stuffisCM}
\big[ \nabla y -  I_{n\times (n-1)} + (v_n)^T \pi(v_n) \big]\in \CMs(CM).
\end{equation}
In particular, the estimates \eqref{NvnisCM}, \eqref{Nt-vnisCM} and \eqref{Ny-stuffisCM} imply that (see Remark~\ref{re.CMsup})
\begin{equation} \label{allinLinfty}
\|\nabla t\|_\infty + \|t\nabla v_n\|_\infty + \| \nabla y \|_\infty \leq C(1+M^{1/2}) .
\end{equation}

\medskip

\noindent {\bf Step 1: proof of (i).} 
Now, we can compute $\nabla \lambda$. Take $a\in \R^n$ to be defined later. We have, since $\int \eta_t = 1$,
\begin{multline} \label{defNlambda}
\nabla \lambda(X) =\nabla(\lambda - a) =  \nabla_X \left( \int_{\R^{n-1}} [h(z)-a]  \eta_{t}\big(y - z\big) \, dz \right) \\ 
= (\nabla t) \left( \int_{\R^{n-1}} [h(z)-a] \partial_t \eta_{t}\big(y - z\big) \, dz \right)  +  \left( \int_{\R^{n-1}} [h(z)-a] (\nabla y) \nabla_y \eta_{t}\big(y - z\big)  \, dz \right),
\end{multline}
where $(\nabla y)  \nabla_y \eta_{t}$ is seen as a matrix multiplication ($\nabla y$ has dimensions $n\times (n-1)$ and $\nabla_y\eta_t$ is a column vector of dimension $n-1$). Check that $\nabla_y \eta_t$ and $\partial_t \eta_t$ are both supported in $B_{\R^{n-1}}(0,t)$, and verify $\|\nabla_y \eta_t\|_\infty +  \|\partial_t \eta_t\|_\infty \lesssim  t^{-n}$. So we obtain that 
\[|\nabla \lambda(X)| \lesssim t^{-n} \left( \int_{B(y,t)} |h(z)-a| \, dz\right) (|\nabla t| + |\nabla y|) \lesssim \frac{M^{1/2}+1}t \fint_{B(y,t)} |h(z) - a|\, dz\]
by \eqref{allinLinfty}. We choose $a:= \fint_{B(y,t)} h\, dz$, and since $h$ is $\epsilon$-Lipschitz, we get that $|h(z) - a| \leq \epsilon t $ for all $z\in B(y,t)$. So 
\[|\nabla \lambda(X)| \leq C_M\epsilon.\]
with a constant independent of $X$. Conclusion (i) of the Theorem follows.

\medskip

\noindent {\bf Step 2: proof of (ii).} We need to be finer. We compute 
\begin{align*} 
v_n\nabla \lambda(X)
&= \left( \int_{\R^{n-1}} h(z) \partial_t \eta_{t}\big(y - z\big) \, dz \right) (\partial_{v_n} t) 
 +  \left( \int_{\R^{n-1}} h(z)  v_n (\nabla y)  \nabla_y \eta_{t}\big(y - z\big)   \, dz \right)\\
 &= : I_1 + I_2.
\end{align*}
Observe that $\nabla_y\eta_t(y-z)=-\nabla_z\eta_t(y-z)$, and so 
\[
\int_{\R^{n-1}}   v_n (\nabla y)  \nabla_y \eta_{t}\big(y - z\big)   \, dz=0 ,
\]
which implies that 
\[
I_2=\int_{\R^{n-1}} \br{h(z)-a'} \big[ v_n (\nabla y)  \nabla_y \eta_{t}\big(y - z\big) \big]  \, dz \quad\text{for any }a'\in\R^n.
\]
Note that \eqref{Ny-stuffisCM} gives that
\[v_n (\nabla y) = v_n \big( \nabla y - I_{n\times (n-1)} + (v_n)^T \pi(v_n) \big) \in \CMs(CM).\]
Hence
\[|I_2| \leq \underbrace{ \left( \int_{B(y,t)} |h(z)-a'| |\nabla_y \eta_t(y-z)|\, dz \right)}_{\leq C\epsilon \text{ as in Step 1}} | v_n(\nabla y)| \in \CMs(CM\epsilon).\]
As for $I_1$, we use \eqref{allinLinfty} to get that
\[
|I_1| \lesssim (1+M^{1/2}) \Big| \underbrace{\int_{\R^{n-1}} h(z)\, \partial_t \eta_{t}\big(y - z\big) \, dz}_{I_{11}} \Big|
\]
To deal with $I_{11}$, we observe that 
\[ \partial_t \eta_t(z) = \diver_z  [\tilde \eta_t] (z),\]
with $\tilde \eta(z) = - z\eta(z)$ and $\tilde \eta_t(z) = t^{1-n} \tilde \eta(z/t)$. Denote by $(z_1,\dots,z_{n-1})$ the coordinates of $z$. By the divergence theorem, the identity implies that for any $a\in\R$,
\[
\int_{\R^{n-1}} (a+z_i) \partial_t \eta_{t}\big(y - z\big) \, dz = -\int_{\R^{n-1}} \nabla_z z_i \cdot \tilde \eta_t(y-z) \, dz = 0 \]
since $\tilde \eta$ is odd (because $\eta$ is radial). We deduce that for any constant vector $a\in\R^n$ and for any constant matrix $B\in\R^{(n-1)\times n}$, 
we have 
\[ \int_{\R^{n-1}} [a+ (z-y) B] \, \partial_t \eta_{t}\big(y - z\big) \, dz= 0,\]
and hence 
\[ 
I_{11} = \int_{\R^{n-1}} [h(z)-a - (z-y)B] \, \partial_t \eta_{t}\big(y - z\big) \, dz.\]
Consequently, 
\begin{equation}\label{eq.Ilebeta}
    |I_{11}| = |I_{11}(X)|\lesssim \beta_h\br{y(X),t(X)} \in CM_{sup}(C_M\epsilon),
\end{equation}
by Lemma \ref{lemDor} and \eqref{heLip}. Step 2 follows.

\medskip

\noindent {\bf Step 3: proof of (iii).} The computations are similar to Step 2, although more involved. Morally, we will proceed as in Step 2, but for all the coordinates instead just for $v_n$. The terms that we can control by Carleson measures at the `first derivative' level will be put in $\mathbf C$. The others will be placed in $\mathbf B$ and we shall differentiate them an additional time to get the control in terms of Carleson measure. We have 
\begin{align*} 
\nabla \lambda(X) 
 =&  (\nabla t) \left( \int_{\R^{n-1}} h(z) \partial_t \eta_{t}\big(y - z\big) \, dz \right)   +  \left( \int_{\R^{n-1}} (\nabla y) \nabla_y \eta_{t}\big(y - z\big) h(z)\, dz \right) \\
 = & (\nabla t) \left( \int_{\R^{n-1}} h(z) \partial_t \eta_{t}\big(y - z\big) \, dz \right) \\
& +  \left( \int_{\R^{n-1}} \big(\nabla y - I_{n\times (n-1)} + (v_n)^T \pi(v_n)\big) \nabla_y \eta_{t}\big(y - z\big) h(z) \, dz \right) \\
& + \left( \int_{\R^{n-1}} \big(I_{n\times (n-1)} - (v_n)^T \pi(v_n)\big) \nabla_y \eta_{t}\big(y - z\big) h(z) \, dz \right)
 = : J_1 + J_2 + {\bf B},
\end{align*}
where 
\[
{\bf B}:= \left( \int_{\R^{n-1}} \big(I_{n\times (n-1)} - (v_n)^T \pi(v_n)\big) \nabla_y \eta_{t}\big(y - z\big) h(z) \, dz \right).
\]
We write $\mathbf C := J_1 + J_2$. The control on $\mathbf C$ is like in Step 2: specifically, $J_1$ can be treated like $I_{1}$,  $J_2$ is treated like $I_2$ (using, instead of $v_n(\nabla y) \in \CMs(CM)$ like for $I_2$, the Carleson estimate \eqref{Ny-stuffisCM}).

For the $L^\infty$-bound on $\mathbf B$, we observe that 
\[
0=\int_{\R^{n-1}}  \big(I_{n\times (n-1)} - (v_n)^T \pi(v_n)\big) \nabla_y \eta_{t}\big(y - z\big) \, dz,
\]
and so 
\[
\mathbf{B}=  \int_{\R^{n-1}}  \big(I_{n\times (n-1)} - (v_n)^T \pi(v_n)\big) \nabla_y \eta_{t}\big(y - z\big) [h(z)-a'] \, dz\quad\text{for any }a'\in\R^n.
\]
Then the estimate $\|\mathbf B\|_\infty \leq C_M\epsilon$ is a simpler variant of the $L^\infty$-bound of $\nabla \lambda$ treated in Step 1. Moreover, observe that 
\[v_n  \big(I_{n\times (n-1)} - (v_n)^T \pi(v_n)\big) =\pi(v_n)-v_n(v_n)^T\pi(v_n) = 0,\]
which entails that $v_n \mathbf B = 0$. 

\medskip

We want now to derive a Carleson measure estimate for $\nabla \mathbf B$. Let $\{\mathbf b_j\}_{1\leq j\leq n}$ be the $n$ rows of $\mathbf B$, and we want thus to study the quantities $\nabla \mathbf b_j$. Write $v_n=:\big((v_n)_1,\dots,(v_n)_n\big)$ and let $\{e_1,\dots,e_n\}$ be the Cartesian orthonormal basis. Then
\[ 
\mathbf b_j(X) =  \int_{\R^{n-1}} h(z) \Big< \pi(e_j - (v_n)_j v_n) ,    \nabla_y \eta_{t}(y - z)  \Big> \, dz .
\]
Differentiating in $X_i$, we have by the chain rule that 
\begin{multline*} 
|\partial_i \mathbf b_j(X)| \leq  \left| \int_{\R^{n-1}}  \nabla_y  \partial_t \eta_{t}\big(y - z\big) h(z) \, dz \right|  |\pi(e_j - (v_n)_j v_n)|  |\partial_i t| \\
+  \left| \int_{\R^{n-1}}  \nabla_y^2 \eta_{t}\big(y - z\big)  h(z) \, dz \right| |\pi(e_j - (v_n)_j v_n)|  |\nabla y|  \\
+ \left| \int_{\R^{n-1}}    \nabla_y \eta_{t}(y - z) h(z) \, dz\right| |\partial_i[(v_n)_jv_n]|
=: K_1 + K_2 + K_3.
\end{multline*}
The term $K_3$ is simple: we have that 
\[
K_3=\left| \int_{\R^{n-1}}    \nabla_y \eta_{t}(y - z) [h(z)-a] \, dz\right| |\partial_i[(v_n)_jv_n]|\quad \text{for any }a\in\R^n,
\]
and we can prove that
\[ \left| \int_{\R^{n-1}} [h(z)-a]   \nabla_y \eta_{t}(y - z) \, dz\right| \lesssim \epsilon,\]
with the same strategy as in Step 1, and \eqref{NvnisCM} gives that $ t\,\partial_i[(v_n)_jv_n] \in \CMs(CM)$. Hence $t|K_3| \in \CMs(CM\epsilon)$ as desired. For the two other terms $K_1$ and $K_2$, observe that $\|\pi(e_j - (v_n)_j v_n)\|_\infty \leq 2$ and
\[  \|\partial_i t\|_\infty  + \|\nabla y\|_\infty \lesssim (1+M)\]
by \eqref{allinLinfty}. So proving that $t\br{K_1+K_2} \in \CMs(C_M\epsilon)$ reduces to proving that $t|K_{11}| + t|K_{21}| \in \CMs(C_M\epsilon)$, where
\[ K_{11}:=  \int_{\R^{n-1}}  \nabla_y  \partial_t \eta_{t}\big(y - z\big) h(z) \, dz\]
and
\[ K_{12}:=  \int_{\R^{n-1}} \nabla_y^2 \eta_{t}\big(y - z\big) h(z) \, dz.\]
The strategy is the same as the one used for $I_{11}$. Observe that 
\[\nabla_y\eta_t(y-z)=-\nabla_z\eta_t(y-z) \text{ and } \int_{\R^{n-1}} \partial_t \eta_t \, dz = \int_{\R^{n-1}} \nabla_y \eta_t \, dz = 0.\]
Hence, by the divergence theorem, for any $a\in\R^n$ and any constant matrix $B\in\R^{(n-1)\times n}$,
\[\int_{\R^{n-1}} [a+ (z-x)B] \nabla_y \partial_t \eta_t(y-z) \, dz = \int_{\R^{n-1}} [a+(z-x) B] \nabla^2_y \eta_t(y-z) \, dz = 0,\]
and thus
\begin{multline*}
 |K_{11}| \leq \inf_{a\in\Rn, B\in\R^{(n-1)\times n}}  \left| \int_{\R^{n-1}} \nabla_y  \partial_t \eta_{t}\big(y - z\big) \big[h(z)-a - (z-x)B\big]  \, dz \right| \\
 \lesssim  \inf_{a\in\Rn, B\in\R^{(n-1)\times n}}  \frac1{t^2}  \fint_{B(y,t)} |h(z) -a - (z-x)B|\, dz=\frac{\beta_h(y,t)}{t}
 \end{multline*}
and similarly
\begin{multline*}
|K_{12}| \leq  \inf_{a\in\Rn, B\in\R^{(n-1)\times n}}  \left|  \int_{\R^{n-1}}  \nabla_y^2 \eta_{t}\big(y - z\big) \big[h(z)-a- (z-x)B)\big]\, dz \right| \\ \lesssim \inf_{a\in\Rn, B\in\R^{(n-1)\times n}}  \frac1{t^2} \fint_{B(y,t)} |h(z) - a - (z-x)B|\, dz=\frac{\beta_h(y,t)}{t}.
 \end{multline*}
We invoke Lemma \ref{lemDor} and \eqref{heLip}, as in Step 2, to obtain that $t|K_{11}| + t|K_{21}| \in \CMs(C_M\epsilon)$. The theorem follows.
\ep

\subsection{Construction of the change of variables}\label{S.cov}

Let $\{e_1,\dots,e_n\}$ be the Cartesian basis, $\{v_i,\dots,v_n\}$ be the orthonormal basis constructed in Lemma \ref{lemdefvi}, and $\lambda$ be the perturbation defined in \eqref{deflambda}--\eqref{deftXyX}.
We define then the vector $\bar w_i$ as
\begin{equation} \label{defwtwi}
\bar w_i(X) := v_i(X) [I+ \mathbf B(X)] \quad\text{for }X\in\om_0,
\end{equation}
where $\mathbf B$ is the matrix-valued function from Theorem \ref{Thlambda}, $(iii)$.

\begin{lemma} \label{lemdefwi}
Let $\Omega_0:=\{ (x,t) \in \R^n,\, t> g(x)\}$, where $g:\R^{n-1}\to\R$ is a $M$-Lipschitz function, and let $\Phi:\R^{n}\to\R^n$ be a bi-Lipschitz change of variables satisfying $\|\nabla \Phi - I\|_\infty \leq \epsilon$. There exists $\epsilon_2>0$ such that if $\epsilon\in (0,\epsilon_2]$, then there exists an orthonormal basis $\{w_1,\dots,w_n\}$ satisfying
\begin{enumerate}[(i)]
\item For each $i\in \{1,\dots,n\}$, $\Span\{w_1,\dots,w_i\} = \Span\{\bar w_1,\dots,\bar w_i\}$,
\item There exists $C>0$ such that, for any $i,j\in \{1,\dots,n\}$, we have\footnote{Recall that $I_{ij}$ are the coefficients of the $n\times n$ identity matrix.} $|\left<w_i,v_j \right> - I_{ij}|\leq C\epsilon$,
\item There exists $C>0$ such that, for any $i,j\in \{1,\dots,n\}$, we have
\[t\abs{\nabla  \langle w_i,v_j\rangle } \in \CMs(C\epsilon),\]
where $t$ is as defined in \eqref{deftXyX}.
\end{enumerate}
In the lemma, $\epsilon_2$ and $C$ depend on $M$ (on top of the the dimension $n$ and the choice of $\eta$).
\end{lemma}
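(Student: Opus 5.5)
The plan is to apply the Gram--Schmidt process to the vectors $\bar w_1,\dots,\bar w_n$ in increasing order of the index. This gives (i) by construction. For (ii) and (iii), we work entirely in the coordinates of the orthonormal frame $\{v_1,\dots,v_n\}$.

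Define the matrix-valued function $P(X)$ by $P_{ij}:=\langle \bar w_i, v_j\rangle = \langle v_i(I+\mathbf B),v_j\rangle = I_{ij} + v_i\mathbf B v_j^T$. Theorem~\ref{Thlambda}(iii) gives $\|\mathbf B\|_\infty\le C_M\epsilon$, so $|P-I|\le C\epsilon$ pointwise.

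Since $\{v_j\}$ is orthonormal, Gram--Schmidt applied to the $\bar w_i$ corresponds exactly to Gram--Schmidt applied to the rows of $P$ in $\R^n$. Its output is $Q=T(P)$, where $T$ is the Gram--Schmidt map (the orthogonal factor in the LQ decomposition $P=LQ$, with $L$ lower triangular with positive diagonal). The output vectors are $w_i:=\sum_j Q_{ij}v_j$, so that $\langle w_i,v_j\rangle = Q_{ij}$.

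The map $T$ is smooth, and in fact real-analytic, on a neighborhood $\{|P-I|\le 1/2\}$ of the identity, with $T(I)=I$. Choosing $\epsilon_2$ so that $C\epsilon_2\le1/2$, the mean value theorem gives $|Q-I|\le C\epsilon$. This is (ii).

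For (iii), the chain rule gives $|\nabla Q|\le C|\nabla P|$ on this neighborhood, so it suffices to show $t|\nabla P_{ij}|\in\CMs(C\epsilon)$. Differentiating $P_{ij}=I_{ij}+v_i\mathbf B v_j^T$ produces three terms.

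\begin{itemize}
\item The term $v_i(\nabla\mathbf B)v_j^T$ is bounded by $t|\nabla \mathbf B|$, which lies in $\CMs(C_M\epsilon)$ by Theorem~\ref{Thlambda}(iii).
\item The terms $(\nabla v_i)\mathbf B v_j^T$ and $v_i\mathbf B(\nabla v_j)^T$ are bounded by $\|\mathbf B\|_\infty\, t|\nabla v_k|$.
\end{itemize}

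For the second group, we have $\|\mathbf B\|_\infty\le C_M\epsilon$, and $t\nabla v_k\in\CMs(CM)$ by \eqref{NvnisCM}. Squaring, the quantity $\sup(\|\mathbf B\|_\infty t|\nabla v_k|)^2$ is at most $C\epsilon^2$ times a Carleson density with constant $CM$. This gives membership in $\CMs(C\epsilon^2)\subset\CMs(C\epsilon)$ for $\epsilon\le1$.

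The CMs condition is stable under sums (with a constant factor) and under multiplication by bounded functions, since the supremum over $\frac12 B_Y$ of a product is at most the product of the suprema. Therefore $t|\nabla P|\in\CMs(C\epsilon)$, and hence $t|\nabla Q_{ij}|\in\CMs(C\epsilon)$.

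The main point to check carefully is the Carleson scaling in $\epsilon$. The definition of $\CMs(M)$ is quadratic in $f$, so we must make sure that every term carries at least one factor of $\epsilon$, either from $\mathbf B$ or from $t\nabla\mathbf B$, and that no term is merely $O(1)$-Carleson. This holds because $\nabla I=0$, so every term of $\nabla P$ contains $\mathbf B$ or $\nabla\mathbf B$. We also need a uniform bound on $\nabla T$ near $I$, which is immediate since $T$ is smooth there.

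A minor additional point is that $t\approx\delta$ by \eqref{t=d}, so weights $t$ and $\delta$ are interchangeable.
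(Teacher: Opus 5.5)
Your proof is correct. It uses the same construction as the paper, namely Gram--Schmidt on $\bar w_1,\dots,\bar w_n$ in increasing order. It also uses the same three inputs: $\|\mathbf B\|_\infty\le C\epsilon$ and $t|\nabla\mathbf B|\in\CMs(C\epsilon)$ from Theorem~\ref{Thlambda}(iii), and $t\nabla v_k\in\CMs(C)$ from \eqref{NvnisCM}. Where you differ is in how (ii) and (iii) are derived.

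The paper runs an explicit induction on $i$. It first shows $\langle\tilde w_i,v_i\rangle\ge 1-C\epsilon$ and $|\tilde w_i|\le 1+C\epsilon$ to control the normalization. It then expands $\langle\tilde w_i,v_j\rangle$ in terms of $\langle v_i\mathbf B,v_\ell\rangle$ and the previously constructed $\langle v_\ell,w_k\rangle$, and differentiates that expansion, using the induction hypothesis on the $\nabla\langle v_\ell,w_k\rangle$ terms.

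You instead note that $x\mapsto\sum_j x_jv_j(X)$ is an isometry. Hence your coefficient matrix $Q=(\langle w_i,v_j\rangle)$ is the orthogonal LQ factor $T(P)$ of $P=I+V\mathbf BV^T$. The map $T$ is smooth on the compact convex set $\{|P-I|\le 1/2\}$ of invertible matrices, with $T(I)=I$. So (ii) follows from the mean value theorem and (iii) from the chain rule $|\nabla Q|\le C|\nabla P|$. This reduces everything to the three-term derivative of $v_i\mathbf Bv_j^T$, which is exactly the paper's base case of the induction.

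Your version removes the induction and its bookkeeping. The sign of $\langle w_i,v_i\rangle$ also comes for free from $T(I)=I$. It is in the same spirit as the paper's own chain-rule argument for Lemma~\ref{lemdefvi}(iii). The paper's version is more hands-on: it never appeals to the smoothness of the Gram--Schmidt (LQ) map, and it keeps each step's constant visible.
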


\bp
The lemma is obtained again with the Gram-Schmidt process. We define by induction 
\begin{equation} \label{defwi}
\tilde w_1=\bar w_1, \quad
\tilde w_i:=  \bar w_i - \sum_{1\le k<i} \left< \bar w_i, w_k\right> w_k \quad\text{for }2\le i\le n, \quad \text{ and } \quad  w_i := \dfrac{\tilde w_i}{|\tilde w_i|}.
\end{equation}

We prove by induction on $i\in \{1,\dots,n\}$ that 
\begin{enumerate}
\item  $\left<\tilde w_i,v_i \right>\geq 1-C\epsilon$, 
\item  $\big| |\left<w_i,v_i \right>| - 1\big| \leq C\epsilon$,
\item for all $j\in \{1,\dots,n\} \setminus \{i\}$,  $|\left<w_i,v_j \right>|\leq C\epsilon$, and
\item for all $j\in \{1,\dots,n\}$, we have $t\nabla  \left<w_i,v_j\right>  \in \CMs(C\epsilon)$.
\end{enumerate}
Note that (1) implies that $|\tilde w_i|\ge 1-C\epsilon$, which entails that the construction \eqref{defwi} is well defined. It is obvious from the construction that $(i)$ holds and that $\set{w_1,\dots,w_n}$ is an orthonormal basis.
\smallskip

\noindent {\em Initial Step:} We have $\tilde w_1 = \bar w_1$, and so 
\[\left<\tilde w_1,v_1 \right> = 1 + \left<v_1\mathbf B,v_1 \right> \geq 1 - C \epsilon\] 
by the $L^\infty$ bound of $\mathbf{B}$ in (iii) of Theorem \ref{Thlambda}, which yields conclusion (1). We also have
\[|\tilde w_1| \leq  |v_1| + |\mathbf B| \leq 1 + C \epsilon\] 
Altogether, $\big| |\tilde w_1| - 1 \big| \leq C\epsilon$, and then
\[\big| |\left<w_1,v_1 \right>| - 1\big| \leq \left| \frac{|\left<\tilde w_1,v_1 \right>| - 1}{|\tilde w_1|}\right| + \left| \frac{|\tilde w_1| - 1}{|\tilde w_1|}\right| \lesssim \epsilon\]
as long as $\epsilon_2$ is chosen small enough such that $|\tilde w_1|\geq 1-C\epsilon_2 \geq \frac12$. Since $w_1$ is a unit vector and $\{v_1,\dots,v_n\}$ is an orthonormal base, the fact that $\big| |\left<w_1,v_1 \right>| - 1\big|\leq C\epsilon$ immediately implies that $\left<w_1,v_j \right>| \leq C\epsilon$ when $j\neq 1$. So we have proved (2) and (3) when $i=1$. Finally
\[ |\nabla  \left<\tilde w_1,v_j\right>| = |\nabla \big[ \left<v_1 \mathbf B,v_j\right> \big] |\lesssim |\nabla \mathbf B| + |\mathbf B| (|\nabla v_1| + |\nabla v_j|),\]
meaning that $t \nabla  \left<\tilde w_1,v_j\right> \in \CMs(C\epsilon)$ by Theorem \ref{Thlambda} and Corollary \ref{corviCM}. Therefore, 
\[t\,\nabla \left[|\tilde w_1|^2\right] = t\,\nabla \left[ \sum_{i=1}^n  |\left<\tilde w_1,v_j\right>|^2\right] \in \CMs(C\epsilon)\]
and, since $|\tilde w_1|\geq \frac12$ by our choice of $\epsilon_2$,
\[t\,\nabla\big[\left<w_1,v_j\right> \big] = t\,\nabla \left[ \frac{\left<\tilde w_1,v_j\right>}{\sqrt{|\tilde w_1|^2}} \right] \in \CMs(C\epsilon).\]
The case $i=1$ follows.

\noindent {\em Induction step:} Let $i\in \{2,\dots,n\}$ and assume that (1)--(4) are true for all $1\leq k<i$. Then, using the expression of $\bar w_i$ and $\tilde w_i$  given in \eqref{defwtwi}--\eqref{defwi},
\begin{multline*}
 \left< \tilde w_i,v_i\right> =  1 +\left< v_i \mathbf B,v_i\right> - \sum_{k<i} |\left< v_i, w_k\right>|^2 -  \sum_{k<i} \left< v_i \mathbf B, w_k\right> \left<w_k, v_i\right> \\
\geq 1 - i\, |\mathbf B| - \sum_{k<i} |\left<w_k,v_i\right>|^2 \geq 1- C\epsilon
 \end{multline*}
  by (iii) of Theorem \ref{Thlambda} and (3) of the induction hypothesis. A similar argument yields that $|\tilde w_i| \leq 1 + C\epsilon$, which means, with the same reasoning as in the initiation step, that conclusions (2) and (3) hold too once we take $\epsilon_2$ small enough. To prove (4), we observe that 
\begin{multline*}
\left<\tilde w_i,v_j\right> = I_{ij} + \left<v_i\mathbf B,v_j\right> - \sum_{k<i} \left< v_i, w_k\right> \left< w_k, v_j\right>  -  \sum_{k<i} \left< v_i\mathbf B,w_k\right> \left<w_k, v_j\right> \\
= I_{ij} + \left< v_i \mathbf B,v_j\right> - \sum_{k<i} \left< v_i, w_k\right> \left< w_k, v_j\right>  -  \sum_{k<i} \sum_{\ell=1}^n  \left< v_i \mathbf B,v_\ell\right> \left<v_\ell,w_k\right> \left<w_k, v_j\right>,
 \end{multline*}
 where we have written $w_k=\sum_{\ell=1}^n\langle w_k,v_\ell\rangle v_\ell$ in the last equality. 
Direct computation shows that 
\[
|\nabla  \left<\tilde w_i,v_j\right>| \lesssim |\nabla\mathbf B| + |\mathbf B| \left(\sum_{\ell = 1}^n |\nabla v_\ell| \right) + (1+ |\mathbf B|) \sum_{k< i } \sum_{\ell=1}^n |\nabla \left<v_\ell,w_k\right>|,
\]
and thus $t \nabla  \left<\tilde w_i,v_j\right> \in \CMs(C\epsilon)$ by Theorem \ref{Thlambda} and (4) of the induction hypothesis. With the same argument as in the initiation step, we deduce that 
\[t \nabla  \left<\tilde w_i,v_j\right> \in  \CMs(C\epsilon)\]
The induction step and the lemma follows.
\ep

In addition to properties $(i)-(iii)$ in Lemma \ref{lemdefwi}, we shall also need the following estimate for $\nabla w_i$: for $i\in\set{1,\dots,n}$,
\begin{equation}\label{eq.gradwi}
    t\abs{\nabla w_i}\in \CMs(CM),
\end{equation}
which can be deduced by expressing $w_i=\sum_{j=1}^n\langle w_i,v_j\rangle v_j$ and then using \eqref{NvnisCM} and $(iii)$ of Lemma \ref{lemdefwi}.
\ms

Let $\mathcal O=\mathcal O(X)$ be the rotation matrix that sends the orthonormal basis $\{v_1,\dots,v_n\}$ constructed in Lemma \ref{lemdefvi} to the orthonormal basis  $\{w_1,\dots,w_n\}$ constructed in Lemma \ref{lemdefwi}. Written in matrix multiplication, it means that 
\begin{equation}\label{defO}
    w_i=v_i\mathcal O \text{ for all } i\in\set{1,\dots,n}, \quad \text{or }W=V\mathcal O,
\end{equation}
 where $V$ is the $n\times n$ (orthogonal) matrix with row vectors $v_1$, $\dots$, $v_n$, and $W$ is the $n\times n$ (orthogonal) matrix with row vectors $w_1$,$\dots$,$w_n$.

\begin{lemma} \label{lemO}
Let $\Omega_0:=\{ (x,t) \in \R^n,\, t> g(x)\}$, where $g:\R^{n-1}\to\R$ is a $M$-Lipschitz function. Let $\epsilon_2$ be the value given in Lemma \ref{lemdefwi}, and let $\Phi:\R^{n}\to\R^n$ be a bi-Lipschitz change of variables satisfying $\|\nabla \Phi - I\|_\infty \leq \epsilon_2$. Then the matrix $\mathcal O$ defined as in \eqref{defO}
verifies the following conditions:
\begin{enumerate}[(i)]
\item $|\mathcal O - I_{n\times n}| \le C\epsilon$,
\item $t\nabla \mathcal O \in \CMs(C\epsilon)$, where $t$ is defined as in \eqref{deftXyX}.
\end{enumerate}
In both cases, the constant $C$ depends on $M$, but not on $\epsilon$.
\end{lemma}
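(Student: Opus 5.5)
Since $W=V\mathcal O$ and $V$ is orthogonal, we have the explicit formula $\mathcal O = V^T W$. The idea is therefore to write the entries of $\mathcal O$ in terms of the inner products $\langle w_i, v_j\rangle$ controlled in Lemma~\ref{lemdefwi}, and to handle the remaining factor $V$ through the Carleson estimates \eqref{NvnisCM}. This is, however, not quite a clean expression in the entries $\langle w_i,v_j\rangle$, because $\mathcal O$ is written in Cartesian coordinates. I would instead use the conjugated form $V\mathcal O V^T = W V^T$, whose $(i,j)$ entry is exactly $\langle w_i, v_j\rangle$. So set $P:=WV^T$, with $P_{ij}=\langle w_i,v_j\rangle$. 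Then $\mathcal O = V^T P V$.

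For (i), Lemma~\ref{lemdefwi}(ii) gives $|P-I|\le C\epsilon$. Since $V$ is orthogonal,
\[
|\mathcal O - I| = |V^T(P-I)V| = |P-I| \le C\epsilon .
\]

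For (ii), we differentiate $\mathcal O=V^TPV$ and use the identity $V^T V = I$ to rewrite
\[
\nabla\mathcal O = \nabla\bigl(V^T(P-I)V\bigr) = (\nabla V^T)(P-I)V + V^T(\nabla P)V + V^T(P-I)\nabla V .
\]
We then multiply by $t$ and bound each term separately:
\begin{itemize}
\item The middle term satisfies $t|V^T(\nabla P)V|\lesssim t|\nabla P|$, which lies in $\CMs(C\epsilon)$ by Lemma~\ref{lemdefwi}(iii).
\item The outer terms are bounded by $|P-I|\,t|\nabla V|\le C\epsilon\, t|\nabla V|$. By \eqref{NvnisCM}, $t|\nabla V|\in\CMs(CM)$. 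Because the $\CMs$ condition is quadratic, multiplying by the pointwise factor $C\epsilon$ puts these terms in $\CMs(C^2\epsilon^2 M)$. Since $\epsilon\le\epsilon_2$, this is contained in $\CMs(C'\epsilon)$ with $C'$ depending on $M$.
\end{itemize}
The sum of three $\CMs$ quantities is again in $\CMs$, with the constant at most a fixed multiple of the sum of the constants, by $(a+b+c)^2\le 3(a^2+b^2+c^2)$ applied under the supremum. This yields $t\nabla\mathcal O\in\CMs(C\epsilon)$.

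The main obstacle is bookkeeping rather than analysis. One has to confirm that the $\CMs$ constants scale linearly in $\epsilon$. This holds because the definition involves the square of the quantity, so a pointwise factor of $\epsilon$ contributes $\epsilon^2\le\epsilon_2\,\epsilon$ to the constant. One must also check that the large $M$-dependence coming from $t|\nabla V|$ enters only through the constant $C$ and not through the power of $\epsilon$.
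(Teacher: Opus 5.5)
Your proposal is correct and matches the paper's proof. Your $P=WV^T$ is exactly the paper's transition matrix $\mathcal O_v$, and the paper likewise writes $\mathcal O - I = V^T(\mathcal O_v - I)V$, then bounds $t|\nabla\mathcal O|$ by the product rule using Lemma~\ref{lemdefwi} and \eqref{NvnisCM}. The only addition is your explicit bookkeeping of the quadratic scaling of the $\CMs$ constant (the $C^2\epsilon^2 M\le C'\epsilon$ step), which the paper leaves implicit.
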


\bp
This lemma is actually a simple corollary of Lemma \ref{lemdefwi}. 
Let $\mathcal O_v$ be the transition matrix that express $\set{w_1,\dots,w_n}$ in terms of $\set{v_1,\dots,v_n}$, that is, $\mathcal O_v V=W$, and in particular, its $ij$-th entry is $\langle w_i,v_j\rangle$.  Lemma \ref{lemdefwi} then gives that $|\mathcal O_v - I_{n\times n}| \leq C\epsilon$ and $t \nabla \mathcal O_v \in \CMs(C\epsilon)$. Since $\mathcal O=V^{-1}\mathcal O_vV$,
we have that $|\mathcal O - I_{n\times n}| = |\mathcal O_v - I_{n\times n}| \leq C\epsilon$. By Corollary \ref{corviCM}, $t\abs{\nabla V}\in \CMs(C_M)$. So we get that   
\[t|\nabla \mathcal O| = t|\nabla (\mathcal O - I_{n\times n})| 
=t\abs{\nabla\left[V^T(\mathcal O_v-I_{n\times n})V\right]}
\leq 2 t|\nabla V| |\mathcal O_v - I_{n\times n}| + t|\nabla O_v| \in \CMs(C\epsilon).\]
The lemma follows.
\ep

We set the matrix function $A_{||}:\, \Omega_0 \to \R^{(n-1)^2}$ and the function $a: \Omega_0 \to \R$ to be such that 
\begin{equation} \label{defAX}
A_{||}(X) := \begin{bmatrix} \left< \bar w_1, w_1\right> & \dots & \left< \bar w_1, w_{n-1}\right> \\ \vdots & \ddots & \vdots \\  \left< \bar w_{n-1}, w_1\right> & \dots & \left< \bar w_{n-1}, w_{n-1}\right>  \end{bmatrix} = ( \left< \bar w_i, w_j\right>)_{1\leq i,j\leq n-1}
\end{equation} 
and
\begin{equation} \label{defaX}
a(X) := \det(A_{||}(X)).
\end{equation} 

\begin{lemma} \label{lemJ0}
Let $\Omega_0:=\{ (x,t) \in \R^n,\, t> g(x)\}$, where $g:\R^{n-1}\to\R$ is a $M$-Lipschitz function. Let $\epsilon_2$ be the value given in Lemma \ref{lemdefwi}, and let $\Phi:\R^{n}\to\R^n$ be a bi-Lipschitz change of variables satisfying $\|\nabla \Phi - I\|_\infty \leq \epsilon_2$. Then the matrix $J_0$ defined as
\begin{equation} \label{blockJ0}
J_0(X) := \begin{bmatrix} A_{||}(X) & \mathbf{0}^T \\ \mathbf{0} & a(X) \end{bmatrix}, \quad \text{where } \mathbf{0} = 0_{\R^{n-1}} = (0,\dots,0),
\quad\text{for }X\in\om_0
\end{equation}
verifies $\|J_0 - I\|_\infty \leq C\epsilon$ and  $t |\nabla J_0| \in \CMs(C\epsilon)$, where $t$ is defined as in \eqref{deftXyX}. The constant $C$ depends on $M$ but not on $\epsilon$.
\end{lemma}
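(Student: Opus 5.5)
The plan is to reduce everything to entrywise estimates on $\langle \bar w_i, w_j\rangle$, since $J_0$ is built from $A_{||}$ and from $a=\det A_{||}$, and the determinant is a polynomial in those entries. It suffices to prove, for all $1\le i,j\le n-1$,
\begin{equation*}
|\langle \bar w_i, w_j\rangle - I_{ij}| \le C\epsilon \quad\text{and}\quad t\,|\nabla \langle \bar w_i, w_j\rangle| \in \CMs(C\epsilon).
\end{equation*}
Granting these, $|A_{||}-I_{(n-1)\times(n-1)}|\le C\epsilon$. Since $\det$ is smooth with bounded derivatives near the identity, this gives $|a-1|\le C\epsilon$, which yields $\|J_0-I\|_\infty\le C\epsilon$. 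For the Carleson part, the chain rule gives $|\nabla a| \lesssim |\nabla A_{||}|$ on the set where $|A_{||}|\le 2$, so $t|\nabla a|\in\CMs(C\epsilon)$. The zero off-diagonal blocks contribute nothing, and hence $t|\nabla J_0|\in\CMs(C\epsilon)$.

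To get the entrywise bounds, I expand everything in the basis $\{v_1,\dots,v_n\}$. Write $\bar w_i = v_i + v_i\mathbf B$ and $w_j = \sum_{\ell}\langle w_j,v_\ell\rangle v_\ell$. This gives
\begin{equation*}
\langle \bar w_i, w_j\rangle = \langle w_j, v_i\rangle + \sum_{\ell=1}^n \langle v_i\mathbf B, v_\ell\rangle\langle w_j,v_\ell\rangle .
\end{equation*}
For the $L^\infty$ bound, I combine $|\langle w_j,v_i\rangle - I_{ij}|\le C\epsilon$ from Lemma~\ref{lemdefwi}(ii) with $\|\mathbf B\|_\infty\le C\epsilon$ from Theorem~\ref{Thlambda}(iii). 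For the gradient, I differentiate term by term:
\begin{equation*}
|\nabla \langle \bar w_i, w_j\rangle| \lesssim |\nabla\langle w_j,v_i\rangle| + |\nabla \mathbf B| + |\mathbf B|\sum_\ell|\nabla v_\ell| + |\mathbf B|\sum_\ell |\nabla \langle w_j,v_\ell\rangle| .
\end{equation*}
After multiplying by $t$, each term is handled by an earlier result. The first is covered by Lemma~\ref{lemdefwi}(iii), and $t|\nabla\mathbf B|$ by Theorem~\ref{Thlambda}(iii). The last term is even better, being $C\epsilon$ times a $\CMs(C\epsilon)$ quantity.

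The one point needing care is the term $t|\mathbf B||\nabla v_\ell|$. Here $t|\nabla v_\ell|\in\CMs(CM)$ by \eqref{NvnisCM}, with a constant that is not small. Since the Carleson condition is quadratic in $f$, multiplying by $|\mathbf B|\le C\epsilon$ yields $\CMs(C\epsilon^2 M)\subset\CMs(C\epsilon)$ for $\epsilon\le\epsilon_2\le1$. I also note that $\CMs$ is closed under sums, with constants adding up to a factor, because $(\sum f_k)^2\lesssim\sum f_k^2$. Finally, bounded smooth functions of quantities in $\CMs$ remain controlled, which justifies the chain-rule step for $a$. I expect no real obstacle; the main care is tracking that every constant is $C\epsilon$ with $C$ depending only on $M$.
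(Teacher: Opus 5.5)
Your proposal is correct and follows essentially the paper's proof. You get entrywise $L^\infty$ and Carleson bounds on $\langle \bar w_i, w_j\rangle$ from Lemma~\ref{lemdefwi}, Theorem~\ref{Thlambda}(iii) and \eqref{NvnisCM}, then pass to $a=\det A_{||}$ through the polynomial structure of the determinant. The only cosmetic difference is that you expand $w_j$ in the basis $\{v_\ell\}$ instead of citing \eqref{eq.gradwi}, which is exactly how the paper derives that estimate anyway.
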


\bp
Recall that 
\[\bar w_i := v_i (I+\mathbf B). \]
So  
\[ \|\left< \bar w_i, w_j\right> - I_{ij}\|_\infty \leq   \|\left< v_i, w_j\right> - I_{ij}\|_\infty + \|\mathbf B\|_\infty \lesssim \epsilon\]
by $(ii)$ of Lemma \ref{lemdefwi} and $(iii)$ of Theorem \ref{Thlambda}. It proves that $\|A_{||} - I_{(n-1)\times(n-1)}\|_\infty \lesssim \epsilon$.
Moreover, we have 
\begin{equation}\label{eq.wiwj}
    t\Big|\nabla\big[\left< \bar w_i, w_j\right>\big]\Big| \lesssim t \Big|\nabla\big[\left< v_i, w_j\right>\big]\Big| + t\|\mathbf B\|_\infty \br{|\nabla v_i|+|\nabla w_j|} + t|\nabla \mathbf B| \in \CMs(C\epsilon)
\end{equation}
by Lemma \ref{lemdefwi}, Theorem \ref{Thlambda}, Corollary \ref{corviCM}, and \eqref{eq.gradwi}. It proves that $t|\nabla A_{||}| \in \CMs(C\epsilon)$.

\medskip

Since $a$ is the determinant of $A_{||}=(a_{ij})_{1\le i,j\le n-1}$, it can be written as
\begin{align*}
 a - 1 & = \left( \prod_{i=1}^{n-1} a_{ii}\right) - 1 + \sum_{\sigma \in \mathfrak S_{n-1} \atop \sigma \neq id} \sgn(\sigma)  \prod_{i=1}^{n-1} a_{i,\sigma(i)} \\
 & = \sum_{k=1}^{n-1} (a_{kk}-1) \left( \prod_{i=k+1}^{n-1} a_{ii}\right) + \sum_{\sigma \in \mathfrak S_{n-1} \atop \sigma \neq id} \sgn(\sigma)  \prod_{i=1}^{n-1} a_{i,\sigma(i)},
  \end{align*}
  where we have used the convention that $\prod_{i=n}^{n-1}a_{ii}=1$.
 Observe  that each product above contains at least one element of $A_{||}-I$. Combined to the fact that the entries of $A_{||}$ are uniformly bounded (by $1+C\epsilon$), we get
  \[|a-1| \lesssim \|A_{||}-I\|_\infty \lesssim \epsilon.\]
Since $a$ is a linear combination of product of elements of $A_{||}$, we also easily have
\[t|\nabla a| \lesssim \sum_{i=1}^{n-1} \sum_{j=1}^{n-1} t\Big|\nabla\big[\left< \bar w_i, w_j\right>\big]\Big| \in \CMs(C\epsilon)\]
by \eqref{eq.wiwj}.
The lemma follows.
\ep

Given $\Omega_0:=\{ (x,t) \in \R^n,\, t> g(x)\}$, where $g:\R^{n-1}\to\R$ is a $M$-Lipschitz function. Given a bi-Lipschitz map $\Phi$ satisfying $\| \nabla \Phi - I\|_\infty \leq \epsilon$, we now define the change of variables $\rho$ from $\Omega_0$ to $\R^n$ as
\begin{equation} \label{defrho}
\rho(X) = X- t(X)v_n(X) + \lambda(X) + a(X) t(X) v_n(X) \mathcal O(X),
\end{equation}
where $\lambda(X)$ and $t(X)$ are defined in \eqref{deflambda}--\eqref{deftXyX}, $a$ defined in \eqref{defaX}, and $\mathcal{O}$ defined in \eqref{defO}. With a lighter notation, it becomes 
\begin{align} \label{defrho2}
\nonumber \rho(X) & = X- tv_n + \lambda + a t v_n\mathcal O \\ & = X + \lambda + a tv_n(\mathcal O-I) + (a-1)tv_n .
\end{align}
Let $\om= \Phi(\om_0)$. 
Observe that for $X\in\pom_0$, $t(X)=0$, $y(X)=\pi(X)$, and $\lambda(X)=h(\pi(X))$, where $h$ is defined as in \eqref{defh}. So  $\rho(X)=X+h(\pi(X))=\Phi(X)$ for $X\in\pom_0$, which implies that $\rho(\pom_0)=\pom$. 
\smallskip

We want to prove that $\rho$ is a bijection between $\Omega_0$ and $\Omega$. 
Moreover, we want to derive estimates for the Jacobian matrix of the map $\rho$ in the bases $\{v_1,\dots,v_n\}$ and $\{w_1,\dots,w_n\}$, which we define as the matrix $J$. That is, the $ij$-th entry of the Jacobian matrix $J$ is 
\begin{equation}\label{defJ}
    J_{ij}=\left< \partial_{v_i} \rho,w_j\right>.
\end{equation}

\begin{theorem} \label{ThrhoJ}
Let $\Omega_0:=\{ (x,t) \in \R^n,\, t> g(x)\}$, where $g$ is a $M$-Lipschitz function. There exists $\epsilon_2=\epsilon_2(M) \in (0,1]$ such that for any $\epsilon \in [0,\epsilon_2]$ and for any bi-Lipschitz change of variables $\Phi:\R^{n}\to\R^n$  satisfying $\|\nabla \Phi - I\|_\infty \leq \epsilon$, the Jacobian matrix $J$ defined in \eqref{defJ} for the map $\rho$ defined in \eqref{defrho} satisfies
\begin{enumerate}[(i)]
\item $\|J- I_{n\times n}\|_\infty \leq C\epsilon$,
\item $J_1:= J-J_0 \in \CMs(C\epsilon)$, where the Carleson condition is with respect to domain $\om_0$.
\end{enumerate}
In both cases, the constant $C>0$ depends on $M$, but not on $\epsilon$.
\end{theorem}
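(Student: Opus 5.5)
Differentiate \eqref{defrho2} directly. Since $\mathcal O=I+O(\epsilon)$ and $w_j=v_j\mathcal O$, it is convenient to compute $\partial_{v_i}\rho$ and pair it with $w_j$. Write
\[
\rho(X)=X+\lambda+tv_n\,\big(a\mathcal O-I\big).
\]
Differentiating in the direction $v_i$ gives
\[
\partial_{v_i}\rho = v_i + \partial_{v_i}\lambda + (\partial_{v_i} t)\, v_n(a\mathcal O - I) + t\,(\partial_{v_i}v_n)(a\mathcal O-I) + t\,v_n\,\partial_{v_i}(a\mathcal O).
\]
Every term except $v_i+\partial_{v_i}\lambda+(\partial_{v_i}t)\,v_n(a\mathcal O-I)$ is a Carleson error of size $\epsilon$.
\begin{itemize}
\item The term $t(\partial_{v_i}v_n)(a\mathcal O-I)$ is a product of $t\nabla v_n\in\CMs(CM)$ (by \eqref{NvnisCM}) and a factor bounded by $C\epsilon$ (by Lemma~\ref{lemJ0}, which gives $|a-1|\lesssim\epsilon$, together with Lemma~\ref{lemO}(i)). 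Since $\CMs$ scales quadratically and the bounded factor is $\epsilon$-small, it lies in $\CMs(C\epsilon)$.
\item The term $t\,v_n\,\partial_{v_i}(a\mathcal O)$ lies in $\CMs(C\epsilon)$ by Lemma~\ref{lemJ0} and Lemma~\ref{lemO}(ii).
\item In the remaining term, \eqref{Nt-vnisCM} lets me replace $\partial_{v_i}t$ by $\langle v_n,v_i\rangle=I_{in}$ up to a $\CMs(C)$ error, again multiplied by an $O(\epsilon)$ factor.
\end{itemize}
All of these pieces are also bounded by $C\epsilon$ in $L^\infty$, using \eqref{allinLinfty} and Remark~\ref{re.CMsup}.

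Next I decompose $\partial_{v_i}\lambda=v_i\nabla\lambda=v_i\mathbf B+v_i\mathbf C$ using Theorem~\ref{Thlambda}(iii). Here $v_i\mathbf C\in\CMs(C\epsilon)$ and $\|\mathbf B\|_\infty\le C\epsilon$, and $v_i+v_i\mathbf B=\bar w_i$ by \eqref{defwtwi}. So, modulo $\CMs(C\epsilon)$ errors,
\[
\partial_{v_i}\rho \equiv \bar w_i + I_{in}\, v_n(a\mathcal O-I) .
\]
For $i\le n-1$ this gives $J_{ij}\equiv\langle\bar w_i,w_j\rangle$.
\begin{itemize}
\item For $j\le n-1$ this is exactly $(A_{||})_{ij}$.
\item For $j=n$, Lemma~\ref{lemdefwi}(i) gives $\bar w_i\in\Span\{w_1,\dots,w_{n-1}\}$ for $i\le n-1$, so $\langle\bar w_i,w_n\rangle=0$. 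This is exactly where the Gram--Schmidt ordering in Lemma~\ref{lemdefwi} is used, and it matches the zero block of $J_0$.
\end{itemize}

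For $i=n$ I must check that $\langle\bar w_n+v_n(a\mathcal O-I),w_j\rangle$ equals $a\,I_{nj}$ up to errors. Theorem~\ref{Thlambda}(iii) gives $v_n\mathbf B=0$, so $\bar w_n=v_n$. Hence
\[
\bar w_n+v_n(a\mathcal O-I)=a\,v_n\mathcal O=a\,w_n,
\]
and pairing with $w_j$ gives exactly $aI_{nj}$. Thus $J-J_0\in\CMs(C\epsilon)$, which is (ii). Part (i) follows from the $L^\infty$ bounds collected above together with $\|J_0-I\|_\infty\le C\epsilon$ from Lemma~\ref{lemJ0}.

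The main obstacle is bookkeeping rather than any deep estimate. Every product of a merely bounded $\CMs(CM)$ quantity (such as $t\nabla v_n$ or $\nabla t-v_n^T$) must appear multiplied by a factor of size $\epsilon$, and never alone. The algebraic cancellations $v_n\mathbf B=0$ and $\bar w_i\perp w_n$ for $i<n$ are what make the uncontrolled parts vanish exactly. The most delicate point is the term $(\partial_{v_i}t)\,v_n(a\mathcal O-I)$. Its Carleson error is only $O(1)\cdot O(\epsilon)$, which is acceptable because $\CMs(M)$ composes as $|fg|\in\CMs(\|g\|_\infty^2M)$.

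The statement also asks that $\rho$ be a bijection onto $\Omega$. For this I would use (i): $\nabla\rho$ is within $C\epsilon$ of an orthogonal change of frame, and $\rho$ coincides with $\Phi$ on $\pom_0$. Then a degree or continuity argument in $t$ along the curves $s\mapsto\rho(X+sv_n)$, together with $t\approx\delta$, gives injectivity and $\rho(\om_0)=\om$ for $\epsilon$ small.
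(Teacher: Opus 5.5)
Your proposal is correct and follows the paper's proof essentially line by line. It uses the same expansion of $\partial_{v_i}\rho$, the same splittings $v_i+\partial_{v_i}\lambda=\bar w_i+v_i\mathbf C$ and $\partial_{v_i}t=I_{in}+\langle \nabla t-v_n^T,v_i\rangle$, and the same identification of the main part $\bar w_i+I_{in}(a w_n-v_n)$ with $J_0$ via $\langle\bar w_i,w_n\rangle=0$ for $i<n$ and $v_n\mathbf B=0$. Your final paragraph on bijectivity is not needed, since bijectivity is not part of this statement; the paper proves it separately in Theorem~\ref{Thrhodiffeo}, by a different argument: it shows $|\rho-\Phi|\lesssim\epsilon\delta$, then injectivity from $\|\nabla\rho-I\|_\infty\le\frac12$, then properness, and finally an open--closed argument.
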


\bp
Direct computation shows that 
\[
\partial_{v_i} \rho = v_i + (\partial_{v_i} \lambda) + atv_n\partial_{v_i}\mathcal O +  (\partial_{v_i} t)(av_n\mathcal O - v_n) +  t \big[\partial_{v_i} v_n \big] (a \mathcal O - I)  + (\partial_{v_i} a) tv_n\mathcal O. 
\]
We have $ v_i + (\partial_{v_i} \lambda)= \bar w_i + v_i\mathbf C$, where we recall that $\nabla \lambda = \mathbf B + \mathbf C$ is the decomposition given in Theorem \ref{Thlambda} and $\bar w_i$ is given in \eqref{defwtwi}. Moreover, $v_n\mathcal O = w_n$ by the definition on $\mathcal{O}$ in \eqref{defO}. So we can write 
\begin{multline}\label{eq.dvirho}
\partial_{v_i} \rho = \bar  w_i + v_i \mathbf  C  + atv_n\partial_{v_i} \mathcal O + \left< \nabla t - v_n^T,v_i\right> (a w_n - v_n) +\left<v_n,v_i\right> (a w_n - v_n) \\
+ t  \big[\partial_{v_i} v_n\big] (a \mathcal O - I)+  (\partial_{v_i} a) tw_n.
\end{multline}
We claim that the entries of the matrix $J_0$ (defined in \eqref{blockJ0}) are 
\begin{equation}\label{eq.J0'}
    (\widetilde{J_0})_{ij} := \Big<  \bar  w_i + \left<v_n,v_i\right> (a w_n - v_n),  w_j\Big>.
\end{equation} 
Indeed,
\begin{itemize}
\item When $i\in \{1,\dots,n-1\}$ and $j\in \{1,\dots,n\}$, we have $\left<v_n,v_i\right> = 0$ and thus
\[(\widetilde{J_0})_{ij}= \left<  \bar  w_i,  w_j\right>.\]
So $(\widetilde{J_0})_{ij} = (A_{||})_{ij} = (J_0)_{ij}$ when $i<n$ and $j<n$. Furthermore, recall that by construction (see (i) of Lemma \ref{lemdefwi}), we have $\left< \bar w_k, w_\ell \right> = 0$ whenever $\ell >k$, which means particularly that $\langle \bar w_i, w_n\rangle =0$ when $i<n$. Hence, $(\widetilde{J_0})_{in} = 0 = (J_0)_{in}$.
\item When $i=n$ and $j\in \{1,\dots,n\}$, we have $\bar w_i = v_i$ and $\left<v_n,v_i\right>=1$, 
so
\[ (\widetilde{J_0})_{nj}:= \left< a w_n,w_j \right> = a I_{nj}.\]
So the last line of $\widetilde{J_0}$ equals that of $J_0$.
\end{itemize}
By \eqref{eq.dvirho} and \eqref{eq.J0'}, one sees that the matrix $J_1$ has entries
\[
(J_1)_{ij} := \Big< 
  v_i \mathbf  C + atv_n\partial_{v_i} \mathcal O + \left< \nabla t - v_n^T,v_i\right> (a w_n - v_n) 
+ t  \big[\partial_{v_i} v_n\big] (a \mathcal O - I) +  (\partial_{v_i} a) tw_n
,\,w_j\Big>.
\]
To prove that $J_1 \in \CMs(C\epsilon)$, observe that
\begin{multline*}
    |(J_1)_{ij}| \lesssim 
\underbrace{|\mathbf C|}_{\in \CMs(C\epsilon)} 
+ \underbrace{t|\nabla \mathcal O|}_{\in \CMs(C\epsilon)} 
+ \underbrace{|\nabla t - v_n^T|}_{\in \CMs(C)} \underbrace{\br{|a-1|+|w_n - v_n|}}_{\leq C\epsilon} \\
+ \underbrace{\br{|a-1|+|\mathcal O- I|}}_{\leq C\epsilon} \underbrace{t|\nabla v_n|}_{\in \CMs(C)}
+\underbrace{t|\nabla a|}_{\in \CMs(C\epsilon)}
\in \CMs(C\epsilon),
\end{multline*} 
by Theorem \ref{Thlambda}, \eqref{NvnisCM}, \eqref{Nt-vnisCM}, Lemma \ref{lemdefwi}, Lemma \ref{lemO}, and Lemma \ref{lemJ0}. 

We have proved that $J$ satisfies the conclusion $(ii)$ of the Theorem. To prove $(i)$, observe that $J_1\in \CMs(C\epsilon)$ implies in particular $\|J_1\|_\infty \lesssim \epsilon$. In addition, $\|J_0 - I\|_\infty \lesssim \epsilon$ by Lemma \ref{lemJ0}. So we conclude that
\[\|J-I\|_\infty \leq \|J_0 - I\|_\infty + \|J_1\|_\infty \lesssim \epsilon,\]
as desired.
\ep

\begin{theorem} \label{Thrhodiffeo}
Let $\Omega_0:=\{ (x,t) \in \R^n,\, t> g(x)\}$, where $g$ is a $M$-Lipschitz function. There exists $\epsilon_2=\epsilon_2(M) \in (0,1]$ such that for any $\epsilon \in [0,\epsilon_2]$ and for any bi-Lipschitz change of variables $\Phi:\R^{n}\to\R^n$  satisfying $\|\nabla \Phi - I\|_\infty \leq \epsilon$, the map $\rho$ defined in \eqref{defrho} is a diffeomorphism between $\Omega_0$ and $\Omega:= \Phi(\Omega_0)$.
\end{theorem}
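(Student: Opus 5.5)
Since $\{v_i\}$ and $\{w_j\}$ are orthonormal bases, Theorem~\ref{ThrhoJ}(i) gives $|\nabla\rho - \mathcal M| \le C\epsilon$ for some orthogonal matrix $\mathcal M(X)$. Writing $\nabla\rho = V^T J W$, we have $\nabla\rho - V^TW = V^T(J-I)W$, so $\nabla\rho$ is a $C\epsilon$-perturbation of the rotation $V^TW$. Consequently $\rho$ is $C^1$ on $\om_0$ (because $G$ is smooth and $|\nabla G|>0$ by \eqref{H1}), it is a local diffeomorphism, and $|\nabla\rho\,\xi|\approx|\xi|$. The remaining issue is global bijectivity onto $\om$. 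I would obtain it from a degree, or continuation, argument rather than a direct injectivity estimate, because a non-constant rotation field does not give injectivity for free along segments.

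\textbf{Step 1 (comparison with $\Phi$).} I will show that $|\rho(X)-\Phi(X)|\le C\epsilon\, t(X)$. Set $P(X):=X-t v_n$, and compare $\rho(X) = P(X)+\lambda(X)+a t v_n\mathcal O$ with $\Phi(X)$. The point $P(X)$ lies within $Ct$ of the boundary point $(y(X),g(y(X)))$. Since $|a-1|+|\mathcal O-I|\le C\epsilon$, we get $|a t v_n\mathcal O - t v_n|\le C\epsilon t$. Since $h$ is $C\epsilon$-Lipschitz by \eqref{heLip}, $|\lambda(X)-h(y(X))|\le C\epsilon t$. Finally, $\Phi(X)-X$ differs from $h(y(X))=\Phi(y,g(y))-(y,g(y))$ by at most $\epsilon|X-(y,g(y))|\le C\epsilon t$, because $\nabla\Phi - I$ is small. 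Altogether $|\rho-\Phi|\le C\epsilon\, t\approx C\epsilon\,\delta$, using \eqref{t=d}. Two consequences follow. First, since $\Phi$ is bi-Lipschitz with constant close to 1, $\rho(X)\in\om$ with $\dist(\rho(X),\pom)\approx\delta(X)$, provided $\epsilon$ is small. Second, $\rho$ extends continuously to $\overline{\om_0}$ with $\rho=\Phi$ on $\pom_0$ (as already observed), and $\rho$ is proper from $\om_0$ to $\om$, including near infinity.

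\textbf{Step 2 (global bijectivity).} Consider the homotopy $\rho_s := \Phi + s(\rho-\Phi)$ for $s\in[0,1]$. By Step 1 it moves points by at most $C\epsilon\,\delta(X)$, so no point of $\om_0$ is sent to $\pom$ along the homotopy. Hence for each $Y\in\om$ the local degree $\deg(\rho,\om_0\cap K,Y)$, computed on a large compact truncation $K$, equals $\deg(\Phi,\cdot,Y)=1$. Because $\det\nabla\rho>0$ (it is close to $\det(V^TW)=1$, and continuity on the connected set $\om_0$ fixes the sign), the degree counts preimages. So every $Y\in\om$ has exactly one preimage, and $\rho:\om_0\to\om$ is a bijective local diffeomorphism, hence a diffeomorphism.

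The main obstacle is setting up the degree argument on unbounded domains. I would localize: for fixed $Y$, restrict to a bounded region $U = \Phi^{-1}(B(Y,R))\cap\{\delta> r\}$, with $R$ large and $r$ small relative to $\dist(Y,\pom)$. By Step 1 the homotopy keeps $\partial U$ away from $Y$. I also need to verify carefully that $\nabla \rho$ lies within $C\epsilon$ of an orthogonal matrix in operator norm, and that $\rho$ is $C^1$ despite $\lambda$ involving the mollifier at a variable scale $t(X)$. This holds because $t$ and $y$ are smooth and the convolution is smooth in $(y,t)$ for $t>0$.
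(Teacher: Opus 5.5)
Your proof is correct, and your Step 1 is essentially the paper's Step 1. The paper compares with $h(\pi(X))$ rather than $h(y(X))$, which is immaterial. Your Step 2, however, reaches global bijectivity by a different route.

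The paper avoids degree theory. From $\nabla\rho=PJP^{-1}\mathcal O$, Theorem~\ref{ThrhoJ}(i) and Lemma~\ref{lemO}(i) it gets $\|\nabla\rho-I\|_\infty\le C\epsilon$. It then joins $X,X'\in\om_0$ by a two-segment path through the intersection of the upward cones at $X$ and $X'$; this path lies in $\om_0$ and has length $\le C_M|X-X'|$. Integrating along it gives $|\rho(X)-\rho(X')|\ge\frac12|X-X'|$. Surjectivity then follows from local invertibility, properness, and an open--closed argument in the connected set $\om$.

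Your concern about a ``non-constant rotation field'' does not actually arise. Indeed $V^TW=V^TV\mathcal O=\mathcal O$, which is $C\epsilon$-close to $I$, so $\nabla\rho$ is close to the identity itself and the direct path argument is available.

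Your homotopy $\rho_s=\Phi+s(\rho-\Phi)$ buys robustness. It needs only $|\rho-\Phi|\lesssim\epsilon\delta$, $\det\nabla\rho>0$, and that $\Phi$ is a homeomorphism, with no use of the path geometry of $\om_0$. The cost is the degree machinery, which requires two checks:
\begin{enumerate}
\item Every preimage of $Y$ lies in your truncation $U$; this follows from Step 1.
\item $\deg(\Phi,U,Y)=1$ for a merely Lipschitz $\Phi$. For instance, on a small ball around $X_0=\Phi^{-1}(Y)$, homotope $\Phi$ to the translation $X\mapsto Y+X-X_0$, using $|\Phi(X)-\Phi(X_0)-(X-X_0)|\le\epsilon|X-X_0|$.
\end{enumerate}

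Conversely, the paper's route gives for free the quantitative lower bound $|\rho(X)-\rho(X')|\ge\frac12|X-X'|$. This is half of the bi-Lipschitz property of $\rho$ that is later needed to apply Proposition~\ref{prop.DbiLip}. With your argument, that bound would still have to be proved separately.
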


\bp
{\bf Step 1: We prove that $\rho(\Omega_0) \subset  \Omega$ when $\epsilon_2$ is sufficiently small.} Fix any $X\in\om_0$. 
First, we claim that 
\begin{equation} \label{claiml-hpi}
|\lambda(X) - h(\pi(X))| \leq Ct(X)\epsilon,
\end{equation}
where we recall that $h$ is the $C_M\epsilon_2$-Lipschitz function defined as $h(z) := \Phi(z,g(z)) - (z,g(z))$. 

Indeed, from the expression of $\lambda$ in \eqref{deflambda}, and since $\int \eta_t = 1$, we have
\begin{multline*}
|\lambda(X) - h(\pi(X))| = \int_{\R^{n-1}} |h(z) - h(\pi(X))| \eta_t(y-z) \, dz \lesssim \fint_{B(y,t)} |h(z) - h(\pi(X))|dz \\ \lesssim \epsilon \max\{ t, |y-\pi(X)|\}
\end{multline*}
since $h$ is $\epsilon$-Lipschitz. However, $|y-\pi(X)| = |\pi(X-tv_n) - \pi(X)| \leq t(X)$, so we have $|\lambda(X) - h(\pi(X))| \lesssim \epsilon t(X)$ as desired. 

We have then, by \eqref{defrho2}, that
\begin{equation} \label{claiml-hpi2}
|\rho(X) - X - h(\pi(X))| \leq |\lambda(X) - h(\pi(X)| + a t |\mathcal O - I_{n\times n}| + t |a-1| \lesssim \epsilon t(X)
\end{equation}
by \eqref{claiml-hpi}, $(i)$ of Lemma \ref{lemO}, and Lemma \ref{lemJ0}. 

We now look at the point $X+h(\pi(X))$. By the definition of $h$, we write 
\begin{multline}\label{eq.X+h}
    X+h(\pi(X))=X+\Phi\br{\pi(X),g(\pi(X))}-\br{\pi(X),g(\pi(X))}\\
    =\Phi(X)+\Phi\br{\pi(X),g(\pi(X))}-\Phi(X)-\br{\pi(X),g(\pi(X))}+X.
\end{multline}
We have
\begin{multline*}
    \abs{\Phi\br{\pi(X),g(\pi(X))}-\Phi(X)-\br{\pi(X),g(\pi(X))}+X}
    \le|X-\br{\pi(X),g(\pi(X))}|\norm{\nabla\Phi-I}_{\infty}\\
    \le \epsilon M \dist(X,\pom_0)
\end{multline*}
since $g$ is $M$-Lipschitz. This, together with \eqref{claiml-hpi2}, \eqref{eq.X+h} and the estimate $t(X)\approx\dist(X,\pom_0)$, gives that 
\[
|\rho(X)-\Phi(X)|\lesssim\epsilon\dist(X,\pom_0)\approx\epsilon\dist(\Phi(X),\pom),
\]
where in the last inequality we have used that the map $\Phi:\om_0\to\om$ is bi-Lipschitz. Therefore, by choosing $\epsilon_2$ sufficiently small, we have that 
\begin{equation}\label{eq.rho-Phi}
    |\rho(X)-\Phi(X)|\le\frac12\dist(\Phi(X),\pom),
\end{equation}
which implies that $\rho(X)\in\om$.
Step 1 follows. Note that \eqref{eq.rho-Phi} also implies that 
\begin{equation} \label{cclstep1}
\dist(\rho(X),\partial \Omega)  \approx \dist(\Phi(X),\partial \Omega)  \dist(X,\partial \Omega_0),
\end{equation}
where the second equivalence is due to the fact that $\Phi$ is bi-Lipschitz.

\medskip

\noindent {\bf Step 2: We show that $\rho$ is a diffeomorphism.}
Since $\rho$ is constructed from the Green function $G$, which is smooth on $\Omega_0$, the map $\rho$ is smooth on $\Omega_0$. Thus, it remains only to prove that $\rho$ is a bijection from $\Omega_0$ onto $\Omega$. 
We shall prove this in 3 steps. 

\medskip

\noindent {\bf (A) Local invertibility.}
By \eqref{defJ} and \eqref{defO},
\begin{equation}\label{defJac}
\nabla \rho
=
P J P^{-1}\mathcal O,
\end{equation}
where $P$ is the orthogonal matrix that changes the Cartesian base $\{e_1,\dots,e_n\}$ into $\{v_1,\dots,v_n\}$.
Theorem~\ref{ThrhoJ} and Lemma \ref{lemO} give that 
\[ \|J - I_{n\times n}\|_\infty + \|\mathcal O -  I_{n\times n}\|_\infty \lesssim \epsilon,\]
so if $\epsilon_2$ is small enough, 
\begin{equation}\label{Nrho-I<e}
\|\nabla\rho - I_{n\times n}\|_\infty \leq \frac12.
\end{equation}
It means that the map $\rho$ is locally invertible at any point. More precisely, for every $X\in\Omega_0$, there exist $r_X,r'_X>0$ such that
\begin{equation}\label{rholocbij}
\rho_\theta|_{B(X,r_X)}
:
B(X,r_X)\to \rho(B(X,r_X))
\supset B(\rho(X),r'_X)
\end{equation}
is a bijection.

\medskip

\noindent {\bf (B) Injectivity and Properness.}
For $X=(x,t)\in\Omega_0$, define the cones
\[
\Cone(X):=\{(z,s): |z-x|\leq M(s-t)\}.
\]
Since $g$ is $M$-Lipschitz, for $X,X'\in\om_0$,
\[
\Cone(X),\Cone(X')\subset \Omega_0.
\]
For $X,X'\in\om_0$, choose
\[
Y_X\in \Cone(X)\cap \Cone(X')
\]
with minimal height. The segments $[X_0,Y_X]$ and $[Y_X,X]$ lie in $\Omega_0$, so by \eqref{Nrho-I<e},
\begin{multline*}
|\rho(X)-X-\rho(X')+X'|\leq
\big(|X-Y_X|+|Y_X-X'|\big)
\|\nabla\rho-I\|_{L^\infty(\Omega)}
\\
\qquad\leq
C_M\varepsilon_2 |X-X'|
\leq \frac12 |X-X'|,
\end{multline*}
provided $\varepsilon_2$ is small enough. Consequently,
\begin{equation}\label{eq.rho_inj}
    |\rho(X)-\rho(X')|
\geq \frac12 |X-X'|\qquad\text{for }X,X'\in\om_0,
\end{equation}
which implies that $\rho$ is injective on $\om_0$. 
\smallskip

We claim that for every compact set $K\subset \om$,
\begin{equation}\label{rho-1(K)}
\rho^{-1}(K)
\text{ is compact in }\Omega_0.
\end{equation}
Indeed, by \eqref{cclstep1},
preimages of compact subsets of $\om$ stay away from $\partial\Omega_0$. Moreover, since $\rho$ is smooth and so in particular continuous, $\rho^{-1}(K)$ is closed. It therefore suffices to prove that $\rho^{-1}(K)$ is bounded. By fixing $X'$ in \eqref{eq.rho_inj}, one sees that 
\begin{equation}\label{eq.rho_prop}
    |\rho(X)|\to\infty \quad\text{as }|X|\to\infty.
\end{equation} 
It means that $\rho^{-1}(K)$ cannot be unbounded, as if it were unbounded, then there would exist a sequence of points $(X_n)_{n\in\mathbb{N}}\subset\rho^{-1}(K)$ such that $|X_n|\to\infty$ as $n\to\infty$. But \eqref{eq.rho_prop} asserts that $|\rho(X_n)|\to\infty$, contradicting to the assumption that $K$ is compact. So the claim \eqref{rho-1(K)} follows.
\medskip

\noindent {\bf (C) Surjectivity.}
We now show that $\rho:\om_0\to\om$ is surjective, and thus a diffeomorphism. For $\mathcal Z\in \om_0$, define the degree
\[
N_{\rho}(\mathcal Z)
:=
\#\{X\in\Omega_0:\rho(X)=\mathcal Z\}.
\]
We shall prove that
\[
N_\rho(\mathcal Z)=1
\qquad\text{for every }\mathcal Z\in\Omega,
\]
which implies that $\rho$ is a diffeomorphism.

We know from {\bf(B)} that $\rho$ is injective, and thus $N_\rho(\rho(X)) = 1$. So, we can write 
\[\Omega := \underbrace{\{\mathcal Z \in \Omega, \, N_\rho(\mathcal Z) = 1\}}_{S_1} \cup \{\mathcal Z \in \Omega, \, N_\rho(\mathcal Z) = 0\}.\]
In order to prove that $S_1 = \Omega$, we then just have to show that $S_1$ is open and closed in $\Omega$.

\smallskip

\noindent \underline{$S_1$ is open.} If $\mathcal Z \in S_1$, then there exists $X\in \Omega_0$ such that $\rho(X) = \mathcal Z$, and \eqref{rholocbij} shows that a neighborhood of $\mathcal Z$ is contained in the range by $\rho$ of a neighborhood of $X$. Hence $S_1$ is open.

\noindent \underline{$S_1$ is closed.}  Let $(\Z_k)_{k\in\bN}$ be a sequence in $S_1$ such that $\Z_k\to \Z$ for some $\Z$ as $k\to\infty$. Then we get a sequence $\{X_k\}_{k\in \bN}$ in $\Omega_0$ such that $\rho(X_k) \to \mathcal Z$ as $k\to\infty$. By \eqref{rho-1(K)}, we know that $\{X_k\}_{k\in \bN}$ is contained in a compact subset of $\Omega_0$, hence up to taking a subsequence, we can assume that $X_k$ converges to some $X_\infty\in \Omega_0$. The continuity of $\rho$ gives that $\rho(X_\infty) = \mathcal Z$. We conclude that $\mathcal Z \in S_1$, which implies that $S_1$ is closed.

\smallskip

To summarize, we showed that $\Omega = S_1 = \{\mathcal Z \in \Omega, \, N_\rho(\mathcal Z) = 1\}$, which means that the local diffeomorphism $\rho$ is a (global) diffeomorphism from $\Omega_0$ to $\Omega$. The theorem follows.
\ep

\section{Proof of the main theorem and corollary}

\subsection{Proof of Theorem \ref{MainTh}}

When $\rho$ is a bi-Lipschitz change of variables from $\om_0$ to $\om$, we say that $L_\rho$ is the conjugate of $-\Delta$ by $\rho$ if for any harmonic function $u$ in $\Omega$, $u_\rho:= u\circ \rho$ is a weak solution to $L_\rho u_\rho = 0$ in $\Omega_0$.

It is a simple exercise to see (see for instance (6.21) in \cite{DFMDahlberg}) that $L_\rho:= -\diver A_\rho \nabla$ with
\begin{equation} \label{defArho}
A_\rho(X) := \det(\nabla\rho(X)) (\nabla\rho)^{-T}(X) (\nabla\rho)^{-1}(X)
\end{equation}
where $\nabla\rho$ is the Jacobian matrix of $\rho$. From there, we have the following result:

\begin{lemma} \label{lemLrho}
Let $\Omega_0:=\{ (x,t) \in \R^n,\, t> g(x)\}$, where $g$ is a $M$-Lipschitz function, and write $G$ for the Green function with pole at infinity for $-\Delta$ on $\om_0$. There exist constants $\epsilon_2=\epsilon_2(n,M) \in (0,1]$ and $C_2=C_2(n,M)>0$ such that the following holds. 

For any $\epsilon \in [0,\epsilon_2]$, take any bi-Lipschitz map $\Phi:\R^{n}\to\R^n$  satisfying $\|\nabla \Phi - I\|_\infty \leq \epsilon$, and set $\Omega = \Phi(\Omega_0)$.
Let $\rho=\rho_{\Phi}$ be the map defined in \eqref{defrho}. Let $L_\rho$ be the conjugate of $-\Delta$ by $\rho$. There exists a symmetric operator $L_0 := -\diver A_0 \nabla$ on $\Omega_0$ such that
\begin{enumerate}[(i)]
\item for all $\xi \in \R^n$ and $X\in \Omega_0$
\begin{equation} \label{ellipA0}
\frac12 |\xi|^2 \leq A_0(X)\xi\cdot \xi \leq 2 |\xi|^2;
\end{equation}
\item $\|A_0 - I\|_\infty \leq C_2 \epsilon$;
\item $|A_\rho - A_0| \in \CMs(C_2\epsilon)$, where the Carleson condition is with respect to domain $\om_0$;
\item $L_0 G = 0$ in $\om_0$.
\end{enumerate}
\end{lemma}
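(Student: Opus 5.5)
The natural choice is to take $A_0$ to be the matrix obtained from $A_\rho$ by replacing the Jacobian $J$ with its block part $J_0$. Using \eqref{defJac}, write $\nabla\rho = PJP^{-1}\mathcal O$. Since $P$ and $\mathcal O$ are orthogonal, $\det\nabla\rho=\det J$. Moreover $(\nabla\rho)^{-T}(\nabla\rho)^{-1} = P J^{-T}J^{-1}P^{-1}$, because $\mathcal O$ cancels: $\mathcal O^{-T}\mathcal O^{-1}=I$. Hence
\[
A_\rho = \det(J)\, P J^{-T}J^{-1} P^{-1}.
\]
I would define
\[
A_0 := \det(J_0)\, P J_0^{-T}J_0^{-1}P^{-1},
\]
with $J_0$ from \eqref{blockJ0}. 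This matrix is symmetric by construction.

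Properties (i) and (ii) should follow directly from Lemma \ref{lemJ0}, which gives $\|J_0-I\|_\infty\le C\epsilon$. For $\epsilon\le\epsilon_2$ small, $J_0$ is invertible and the map $M\mapsto \det(M)M^{-T}M^{-1}$ is smooth near $I$, so $|A_0-I|\lesssim|J_0-I|\lesssim\epsilon$. Shrinking $\epsilon_2$ then gives the ellipticity bounds \eqref{ellipA0}.

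For (iii), I would apply the same smoothness (local Lipschitz continuity near $I$) to get
\[
|A_\rho-A_0|\lesssim |J-J_0| = |J_1|,
\]
using $\|J-I\|_\infty\lesssim\epsilon$ from Theorem \ref{ThrhoJ}(i). Then Theorem \ref{ThrhoJ}(ii) gives $J_1\in\CMs(C\epsilon)$. A pointwise bound transfers directly into $\CMs$, since the supremum over $\frac12 B_Y$ is taken after the pointwise comparison.

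The substantive point is (iv). In the basis $\{v_i\}$, $J_0$ is block diagonal with $(n,n)$-entry $a$ and upper-left block $A_{||}$, where $\det A_{||}=a$. So $\det J_0 = a^2$ and $J_0^{-T}J_0^{-1}$ is block diagonal with last entry $a^{-2}$. Therefore the $(n,n)$-entry of $\det(J_0)J_0^{-T}J_0^{-1}$ is exactly $1$ and its off-diagonal last row and column vanish. Concretely, $P$ sends $e_n$ to $v_n$; I need to check the row/column-vector convention carefully so that the claim reads $A_0 v_n^T=v_n^T$. Since $\nabla G=|\nabla G|v_n^T$, this gives $A_0\nabla G=\nabla G$, and hence $\diver(A_0\nabla G)=\Delta G=0$. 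Thus $L_0G=0$ in $\om_0$, classically, since everything is smooth in the interior.

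The main obstacle is only bookkeeping. I must verify the convention in \eqref{defJ} and \eqref{defJac} (rows versus columns, and $P$ versus $P^{-1}$) so that the block structure really is expressed in the basis $\{v_i\}$ on the correct side. This is exactly what guarantees that $v_n$ is an eigenvector of $A_0$ with eigenvalue $1$; the choice of $a=\det A_{||}$ was made to force that eigenvalue to be $1$.
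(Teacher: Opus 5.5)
Your proposal is correct and follows the paper's proof essentially verbatim. The paper makes the same choice $A_0=\det(J_0)PJ_0^{-T}J_0^{-1}P^{-1}$ and reduces (i)--(iii) to Lemma~\ref{lemJ0} and Theorem~\ref{ThrhoJ} in the same way; the only difference is that you cite local Lipschitz continuity of $K\mapsto\det(K)K^{-T}K^{-1}$ near $I$, where the paper writes out the algebraic splitting explicitly. For (iv) it uses the same block computation $\det(J_0)a^{-2}=1$, and the convention you flagged does work out: from \eqref{defJ} one gets $J=V(\nabla\rho)W^T$ with $W=V\mathcal O$, so $P=V^T$ and $A_0v_n^T=P\,[\det(J_0)J_0^{-T}J_0^{-1}]\,e_n^T=v_n^T$.
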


\bp
Take $\epsilon_2$ small enough such that Theorems \ref{ThrhoJ} and \ref{Thrhodiffeo} apply. Let $P=P(X)$ be the orthogonal matrix that changes the base $\{e_1,\dots,e_n\}$ into $\{v_1,\dots,v_n\}$. Recall that $\nabla\rho$ is given by 
\[\nabla\rho = P J P^{-1} \mathcal O\]
as in \eqref{defJac}. So the matrix $A_\rho$ is then given by
\[A_\rho(X) = \det(J) P J^{-T} J^{-1} P^{-1}, \]
because the matrices $P$ and $\mathcal O$ are orthogonal.

\medskip

The choice of $A_0$---hence of $L_0$---is given by
\[A_0:= \det(J_0) P (J_0)^{-T} (J_0)^{-1} P^{-1}.\]
From the construction and the fact that $P$ is an orthogonal matrix, it follows that $A_0$ is symmetric.  
Recall that $\|J_0 - I\|_\infty \leq C\epsilon$ by Lemma  \ref{lemJ0}. So---assuming that $\epsilon_2$ is small enough---the matrix $A_0$ is elliptic, and since the  determinant is a multilinear form, we have 
\[\|\det(J_0) - 1\|_\infty \leq C\epsilon.\]
By writing 
\[
A_0 - I = (\det(J_0) - \det(I))PJ_0^{-T}J_0^{-1}P^{-1}+P(J_0^{-1}+I)J_0^{-1}(I-J_0)P^{-1},
\]
the above estimates give that 
\begin{multline*}
    \|A_0 - I\|_\infty \leq \|\det(J_0) - \det(I)\|_\infty \|P (J_0)^{-T} (J_0)^{-1} P^{-1}\|_\infty \\ 
    + \|P\|_\infty\|P^{-1}\|_\infty \|(J_0)^{-1}\|_\infty \big(\|I\|_\infty + \|(J_0)^{-1}\|_\infty\big) \|J_0- I\|_\infty \leq C \epsilon
\end{multline*}
because all the considered matrices are uniformly bounded in $X \in \Omega_0$. So by choosing $\epsilon_2$ small enough, we can guarantee that $A_0$ satisfies the ellipticity condition \eqref{ellipA0}.

Since we also have that $J-J_0 \in \CMs(C\epsilon)$ and $\|J-J_0\|_\infty \leq C\epsilon$ by Lemma \ref{ThrhoJ}, the multilinearity of the determinant also gives that
\[|\det(J) - \det(J_0)| \in \CMs(C\epsilon).\]
We conclude that 
\begin{multline*}
    |A_\rho - A_0| \leq |\det(J) - \det(J_0)| \|P J^{-T} J^{-1} P^{-1}\|_\infty \\ 
    + \|P\|_\infty\|P^{-1}\|_\infty \|\det(J_0)\|_\infty \|J^{-1}\|_\infty \|(J_0)^{-1}\|_\infty \big(\|J^{-1}\|_\infty + \|(J_0)^{-1}\|_\infty\big) |J_0- J| \\ \in \CMs(C\epsilon).
\end{multline*}

\medskip

It remains to prove that $L_0 G = 0$. This is true because of our careful construction of $\rho$. More precisely, we constructed $\rho$ to preserve the structure of $\Omega_0$, so that $G$ is close to be a solution to $L_\rho u = 0$.

We claim that for $j\in\set{1,\dots,n}$,
\begin{equation} \label{claimA0vn}
    \left< A_0 v_n^T,v_j\right> = I_{jn}.
\end{equation}
Indeed, $\left< A_0 v_n^T,v_j\right>$ is the coefficients of the last column of $A_0$ in the base $\{v_1,\dots,v_n\}$. The matrix $A_0$ in the base $\{v_1,\dots,v_n\}$ is given by
\[A_{0,v} := P^{-1} A_0 P = \det(J_0) (J_0)^{-T} (J_0)^{-1}.\]
However, the matrix $J_0$ - defined in \eqref{blockJ0} - is diagonal by block, so it is easy to compute the last column of $A_{0,v}$: it is given by
\[(0,\dots,0,\det(J_0) a^{-2})^T = (0,\dots,0,1)^T\]
since $\det(J_0) = a \det(A_{||}) = a^2$. 

The claim \eqref{claimA0vn} gives that $A_0 v_n^T = v_n^T$, which implies that 
\[A_0 \nabla G = \nabla G\]
since $v_n = \nabla G^T/|\nabla G|$. It means that 
\[L_0 G = -\diver A_0 \nabla G = -\Delta G = 0 \quad \text{in }\om_0,\]
since $G$ is the Green function (for $-\Delta$) with pole at infinity. The lemma follows.
\ep

\bigskip

\noindent{\em Proof of Theorem \ref{MainTh}.} Let $\Omega_0 \subset \R^n$ and $p\in (1,\infty)$ be as in the theorem. Let $G_{-\Delta}$ and $\omega_{-\Delta}$ be the Green function and harmonic measure with pole at infinity for $\om_0$ defined in Lemma~\ref{lem GrEm_infty}. Since the $L^p$ Dirichlet problem for $-\Delta$ is solvable in $\Omega_0$,  there is a constant $C_p>0$ such that the reverse H\"older inequality \eqref{defRH} holds.
\begin{itemize}
\item Let $\epsilon_1= \epsilon_1(n,p,C_p,M,2)$ be the constant given by Theorem \ref{ThCarlpert}, that is the constant under which Carleson perturbations preserves the solvability of the Dirichlet problem in $L^p$ if as long as the operator in $\Omega_0$ before perturbation verifies the reverse H\"older bound \eqref{defRH} and is uniformly elliptic with constant $2$.

\item Let $\epsilon_2 = \epsilon(n,M)$ and $C_2=C_2(n,M)$ be the constants given by Lemma \ref{lemLrho}. 
\end{itemize}
Define
\[\epsilon_0:= \min\{\epsilon_2,\epsilon_1/C_2\}.\]
Take then a bi-Lipschitz change of variables $\Phi$ verifying $\|\nabla \Phi - I\|_\infty \leq \epsilon_2$ and set $\Omega:= \Phi(\Omega_0)$. We can construct the bi-Lipschitz change of variables $\rho$ as in \eqref{defrho} that maps $\Omega_0$ to $\Omega$, and then the conjugate $L_\rho$ of $-\Delta$ by $\rho$ as in \eqref{defArho}. Take then $L_0$ as in Lemma \ref{lemLrho}.

\medskip

With all the necessary tools being introduced, the argument is then straightforward. 
\begin{enumerate}
    \item Since $L_0 G_{-\Delta} = -\Delta G_{-\Delta} = 0$ in $\om_0$, one has $G_{-\Delta} = G_{L_0}$, that is, $G_{-\Delta}$ is the Green function with pole at infinity for $L_0$ in $\Omega_0$. It means in particular that $L_0$ satisfies the reverse H\"older inequality \eqref{defRH} with the same constant $C_p$ as the Laplacian.
    \item By Lemma \ref{lemLrho}, we have that $L_0$ is uniformly elliptic with constant 2, and $|A_\rho-A_0| \in \CMs(C_2\epsilon_0)$. But since $C_2\epsilon_0 \leq \epsilon_1$, Theorem \ref{ThCarlpert} gives that the Dirichlet problem (for $L_\rho$ in $\Omega_0$) is also solvable for boundary data in $L^p$.
    \item Since a bi-Lipschitz change of variables preserves the solvability of the $L^p$ Dirichlet problem, see Proposition~\ref{prop.DbiLip}, the $L^p$ Dirichlet problem for $-\Delta$ in $\Omega$ is also solvable.
\end{enumerate}
Succinctly, the proof is

\noindent  \begin{tikzcd}
(D_{p})_{-\Delta,\Omega_0} \arrow[rrr,Leftrightarrow, bend left, "{\tiny G_{-\Delta} = G_{L_0}}" above, "{\text{\tiny \begin{tabular}{c} Lem \ref{lemLrho} (iii) \\ $+$ Thm \ref{ThDP<=>RH}\end{tabular}}}" below] 
 & & & (D_{p})_{L_0,\Omega_0} \arrow[rrr,Rightarrow, bend left, "{\tiny A_0 - A_\rho \in \CMs(\epsilon_1)}" above, "{\text{\tiny \begin{tabular}{l} Thm \ref{ThCarlpert} \end{tabular}}}" below] 
 & & & (D_{p})_{L_\rho,\Omega_0} \arrow[rrr,Leftrightarrow, bend left, "{\tiny \rho \text{ bi-Lipschitz}}" above, "{\text{\tiny \begin{tabular}{l} Prop \ref{prop.DbiLip} \end{tabular}} }" below] 
 & & & (D_{p})_{-\Delta,\Omega}
\end{tikzcd}

\noindent Theorem \ref{MainTh} follows. \ep

\subsection{Proof of Corollary \ref{MainCor}}\label{S.pfCor}

Let $\Omega \subset \R^n$ be a bounded strongly quasiconvex domain. Take $X_0$ to be a center of $\Omega$, that is, a point such that $\dist(X_0,\partial \Omega) \approx \diam(\Omega)$, and set $M$ to be the Lipschitz constant that appears in Definition \ref{defquasiconvex}. Let $G^{X_0}$ and $\omega^{X_0}$ be, respectively, the Green function and harmonic measure with pole at $X_0$ for $\om$.

Let $p\in (1,\infty)$. Let $C_p = C_p(M)$ be the reverse H\"older constant which is uniform to $M$-Lipschitz convex domains. Let $\epsilon_0:= \epsilon_0(n,p,C_p,M) >0$ be as in Theorem \ref{MainTh}, and then $r_0>0$ as in Definition \ref{defquasiconvex}. Without loss of generality, we can assume that $r_0<\frac{\dist(X_0,\pom)}{2}$. 

\medskip

We will prove that there exists $m\in \mathbb N$ and $C>0$ such that for any $x_0 \in \partial \Omega$ and any $r\in (0,2^{-m}r_0]$, we have the reverse H\"older inequality
\begin{equation} \label{claimMainCor}
\left( \fint_{B(x_0,r)\cap \partial \Omega} \br{\kappa^{X_0}}^{p'} \, d\sigma\right)^\frac1{p'} \leq C \fint_{B(x_0,r)\cap \partial \Omega} \kappa^{X_0} \, d\sigma,
\end{equation}
where $\kappa^{X_0}(x):=\limsup_{X\in \gamma(x)\atop X\to x} \frac{G^{X_0}(X)}{\delta(X)}$ is defined as in \eqref{defRH'}.
The claim \eqref{claimMainCor} is sufficient to prove Corollary \ref{MainCor}. Indeed, by Theorem \ref{ThDP<=>RH}, the Corollary would follow if we had \eqref{claimMainCor} for all $r\in (0,\diam \Omega)$, instead of of only $r\in (0,2^{-m}r_0]$. However, the reverse H\"older bounds for $r\in (2^{-m}r_0,\diam \Omega)$ can easily deduced from the reverse H\"older bounds for $r=2^{-m}r_0$ and a covering argument.

\medskip

Take $x_0 \in \partial \Omega$. Let $g=g_{x_0}$ be the $M$-Lipschitz function and $\Phi=\Phi_{x_0}$ be the bi-Lipschitz change of variables given by Definition \ref{defquasiconvex} (for a coordinate system that also depend on $x_0$), that is such that 
\[\Omega \cap B(x_0,r_0) = \underbrace{\Phi_{x_0}(\{(x,t)\in \R^n, \, t>g_{x_0}(x)\})}_{=:\Omega_{x_0}} \cap B(x_0,r_0).\]
We write $\sigma_{x_0}$ for the surface measure on $\partial \Omega_{x_0}$, $G_{x_0}$ for the Green function for $-\Delta$ with pole at infinity in the domain $\Omega_{x_0}$ normalized so that 
\begin{equation}\label{Gx0norm}
    G_{x_0}(X_{x_0,m})=G^{X_0}(X_{x_0,m}),
\end{equation}
where $X_{x_0,m}\in B(x_0,r_0/2)\cap\om$ satisfies \[\dist(X_{x_0,m},\pom)=\dist(X_{x_0,m},\pom_{x_0})\approx |X_{x_0,m}-x_0|\approx 2^{-m}r_0,\]
$m\ge1$ to be determined. 
We write $\kappa_{x_0}$ for
\[\kappa_{x_0}(x):= \limsup_{X\in \gamma_{x_0}(x) \atop X \to x} \frac{G_{x_0}(X)}{\dist(X,\partial \Omega_{x_0})}, \qquad x\in \partial \Omega_{x_0}.\]
Here, $\gamma_{x_0}(x):= \{Z\in \Omega_{x_0}, \, |Z-x| \leq 2 \dist(Z,\partial \Omega_{x_0})$ as expected.

By definition, the domain $\om_{0,x_0}:=\set{(x,t)\in\Rn: \, t>g_{x_0}(x)}$ is a convex Lipschitz graph domain, and so Theorem~\ref{thm-Dp-C1a} applies. 
It means that the $L^p$ Dirichlet problem for $-\Delta$ is solvable in $\Omega_{0,x_0}$, and that the constant $C_p$ in \eqref{defRH} is independent of $x_0$ even though $\om_{0,x_0}$ itself depends on $x_0$. Therefore, by Theorem \ref{MainTh}, the $L^p$ Dirichlet problem for $-\Delta$ is solvable in $\Omega_{x_0}$. Then Theorem \ref{ThDP<=>RH} entails that there exists a constant $C'_p:=C'_p(n,p,C_p,M)$---so independent of $x_0$---such that we have the reverse H\"older bound
\begin{equation} \label{RHforCor1} 
\left( \fint_{B(x,r)\cap \partial \Omega_{x_0}} |\kappa_{x_0}|^{p'} \, d\sigma \right)^\frac{1}{p'} 
\leq C'_p \fint_{B(x,r)\cap \partial \Omega_{x_0}}|\kappa_{x_0}| \, d\sigma  \qquad \text{ for all } x\in \partial \Omega_{x_0}, \, r>0.
\end{equation}
This last bound is not too far from our objective \eqref{claimMainCor}, we just need to drop the dependence of $\kappa$ in $x_0$. We do this using the comparison principle.

Observe first that for $x$ and $r$ such that $B(x,r)\subset B(x_0,r_0)$, one has $B(x,r)\cap\pom_{x_0}=B(x,r)\cap\pom$, and that  
\begin{equation} \label{truc1}
    \dist(Z,\partial \Omega_{x_0}) = \dist(Z,\partial \Omega) \quad \text{ whenever $Z\in B(x_0,3r_0/2)$, $\dist(Z,\partial \Omega)<r_0/10$.}  
\end{equation}
Moreover, since $G^{X_0}$ and $G_{x_0}$ are both solutions to $-\Delta u = 0$ in $B(x_0,r_0) \cap \Omega$, we have by the comparison principle and the H\"older continuity of solutions (see e.g. \cite[Corollary 6.4]{DEM}) that
\begin{equation*}
    \left|\frac{G_{x_0}(X)G^{X_0}(Y)}{G^{X_0}(X) G_{x_0}(Y)} - 1 \right| \leq C \br{\frac{\rho}{r_0}}^\alpha \quad \text{ whenever $X,Y\in B(x_0,\rho)\cap\om$},\quad \rho<r_0/2,
\end{equation*}
with constants $C$, $\alpha$ depending only on $n$ and $M$. We now take $Y=X_{x_0,m}$ and choose $m$ large enough so that 
\begin{equation} \label{truc2}
     \left|\frac{G_{x_0}(X)}{G^{X_0}(X)} - 1 \right| \leq C2^{-m\alpha}\le \frac12 \qquad\text{for }X\in B(x_0,2^{-m}r_0)\cap \om,
\end{equation}
where we have used the normalization \eqref{Gx0norm}. 

The combination of \eqref{truc1} and \eqref{truc2} gives that 
\[ \frac12 \kappa(x)\le \kappa_{x_0}(x) \le \frac32 \kappa(x) \qquad \text{ for } x\in B(x_0,2^{-m}r_0)\cap \partial \Omega.\]
From this and \eqref{RHforCor1},  \eqref{claimMainCor} follows when $r \leq 2^{-m}r_0$, and so does Corollary \ref{MainCor}.

\appendix
\section{Proof of Remarks~\ref{rm.Om-graph} and \ref{rm.C1invariant}}\label{S.App-rmk}
In this appendix, we prove the two observations made in Remarks~\ref{rm.Om-graph} and \ref{rm.C1invariant}.
\begin{proof}[Proof of Remark~\ref{rm.Om-graph}]
We write the inverse map of $\Phi$ by  $\Phi^{-1}(x,t) = (\phi(x,t), T(x,t))$ for $(x,t)\in\R^{n-1}\times\R$, where $\phi:\Rn\to\R^{n-1}$ and $T:\Rn\to\R$. Then if $(x,t)\in\pom$, it follows that $\Phi^{-1}(x,t)\in\pom_0$, and thus $T(x,t) = g(\phi(x,t))$. Since $|\nabla \Phi - I| \le \epsilon_0$, we have $|\nabla \Phi^{-1} - I| \lesssim \epsilon_0$. This implies that we can write $\phi(x,t) = x + \psi(x,t)$ and $T(x,t) = t - h(x,t)$ for some mapping $\tilde\psi:\Rn\to\R^{n-1}$ and $\tilde h:\Rn\to\R$ that satisfy $|\nabla \tilde \psi| + |\nabla\tilde h| \lesssim \epsilon_0$. Hence, for $(x,t)\in\pom$, there holds
     $$t = g(x + \psi(x,t)) + h(x,t).$$
    For $\epsilon_0>0$ small enough so that $C\epsilon_0(M+1)<1$, we can apply the implicit function theorem to solve the above equation and get that $t = f(x)$ for some Lipschitz function $f:\R^{n-1}\to\R$. Hence, $\pom$ can be expressed as
     $$\set{(x,t)\in\R^{n-1}\times\R: t = g(x + \tilde\psi(x,f(x))) + \tilde h(x,f(x))}. $$
     We let $\psi(x)=\tilde\psi(x,f(x))$, $h(x)=\tilde h(x,f(x))$, and check that they have small gradients as desired.
\end{proof}

\begin{proof}[Proof of Remark~\ref{rm.C1invariant}]
Let $\Omega$ be a strongly quasiconvex domain and let $x_0 = 0 \in \partial \Omega$. By definition, for any $\epsilon_0>0$, there exists $r>0$, a bi-Lipschitz map $\Phi$ with $\norm{\nabla\Phi-I}\le \epsilon_0$, and an $M$-Lipschitz function $g$ such that $\om\cap B(0,r)=\Phi(\{ t>g(x)\})  \cap B(0,r)$. Let $\Psi:\Rn\to\Rn$ be a $C^1$ diffeomorphism. We want to show that $\Psi(\om)$ is strongly quasiconvex. 

We can assume without loss of generality that $\Psi(0) = 0 = \Phi(0)$. For some $r_0\in(0,r)$, we have
     $$ \Psi(\Omega) \cap B(0,r_0) = \Psi\circ \Phi (\{ t>g(x) \})  \cap B(0,r_0).$$
     Define a nondegenerate linear transform $L:\Rn\to\Rn$, $L(X) = \nabla \Psi(0)X  = X_i\, \partial_i \Psi_j(0)$. 
     Write
     $$ 
     \Psi\circ \Phi(X) = \Psi\circ \Phi \circ (L^{-1} (L (X))). 
     $$ 
     Note that the $L$ transforms convex domains into convex domains. Thus up to a change of coordinates, $L(\{ t > g(x) \}) = \{ t > \hat{g}(x) \}$, where $\hat{g}$ is also a convex function. Hence
     $$ 
     \Psi(\Omega) \cap B(0,r_0) = \Psi\circ \Phi \circ L^{-1} ( \{ t > \hat{g}(x) \}). 
     $$ 
     Finally note that $\Phi(X) = X + O(\epsilon_0)|X|$ is close to the identity transform and that $L^{-1}$ is close to $\Psi^{-1}$ in a neighborhood of $0$ since $\Psi(X) = LX + o(|X|)$ (using the fact that $\Psi$ is $C^1$). Thus by the chain rule, we can  show that $|\nabla(\Psi\circ \Phi \circ L^{-1}) - I| \le C\epsilon_0$ if $r_0$ is small enough. This means that $\Psi(\Omega)$ is strongly quasiconvex.
\end{proof}

\section{An alternative proof of Lemma~\ref{lemSL=>H}}\label{S.altpf} We can assume without loss of generality that $g(0)=0$, that is, $0\in\pom_0$. 

\medskip

{\bf Step 1: Show that $\partial_n G\geq 0$ in $\om_0$.} This step carries all the key arguments, and \eqref{H1}--\eqref{H2} is actually a corollary of this statement.

\medskip

\noindent {\bf Step 1a: Convergence to $G$.} For $Y\in\om_0$, let $G^Y$ be a smooth version of Green function with pole at $Y$. Specifically, if $G(X,Y)$ is the Green function in $\Omega_0$, and if $\eta_t$ is a mollifier, meaning that $\eta \in C^\infty(B_{\R^n}(0,1/2))$, $\eta \geq 0$, $\int_{\R^n} \eta = 1$, and $\eta_t(Z) := t^{-n} \eta(Z/t)$, then 
\[G^Y(X) := \int_{\Omega_0} \eta_{\delta(Y)}(Y-Z) G(X,Z) \, dZ.\]
Let $Y_i$ be the point $(0,2^{i})$. From the construction of the Green function with pole at infinity, and in particular, \eqref{Greeninfty_lmt}, we have that
\begin{equation} \label{convGYtoGwZ}
\frac{G(.,Z_i)}{G(Y_0,Z_i)} \text{ converges to } \frac{G}{G(Y_0)} \text{ uniformly on compacts of $\overline{\Omega_0}$}.
\end{equation}
as $i\to \infty$, as long as $Z_i \in B(Y_i,\delta(Y_i)/2)$. Moreover, the rate of convergence does not depend on the choice of $Z_i \in B(Y_i,\delta(Y_i)/2)$. As a consequence
\begin{equation} \label{convGYtoG}
\frac{G^{Y_i}}{G^{Y_i}(Y_0)} \text{ converges to } \frac{G}{G(Y_0)} \text{ uniformly on compacts of $\overline{\Omega_0}$}.
\end{equation}

The De Giorgi-Nash-Moser estimate and the Caccioppoli inequality show then the convergence of the gradient, that is
\begin{equation} \label{convdtGYtodtG}
\frac{\partial_n G^{Y_i}}{G^{Y_i}(Y_0)} \text{ converges to } \frac{\partial_n G}{G(Y_0)} \text{ uniformly on compacts of $\Omega_0$}.
\end{equation}
Indeed, take $K\Subset \Omega_0$ be compact, and chose $i\in \bN$ large enough such that $|Y_i| \geq 4(\diam K + \dist(K,\partial \Omega_0)$. Then for any $X\in K$, since both $G^{Y_i}$ and $G$ are solutions to $\Delta u = 0$ in $B(X,\delta(X))$, so is $\frac{\partial_n G^{Y_i}}{G^{Y_i}(Y_0)} - \frac{\partial_n G}{G(Y_0)}$.
Therefore, by the De Giorgi-Nash-Moser estimate and the Caccioppoli inequality, we have
\begin{multline*}
   \left| \frac{\partial_n G^{Y_i}(X)}{G^{Y_i}(Y_0)} - \frac{\partial_n G(X)}{G(Y_0)} \right| 
   \lesssim \left( \fint_{B_X/2} \left| \frac{\partial_n G^{Y_i}(Z)}{G^{Y_i}(Y_0)} - \frac{\partial_n G(Z)}{G(Y_0)} \right|^2 \, dZ \right)^\frac12 \\
   \lesssim \frac1{\dist(K,\partial \Omega_0)} \left( \fint_{B_X} \left| \frac{G^{Y_i}(Z)}{G^{Y_i}(Y_0)} - \frac{G^{Y_i}(Z)}{G^{Y_i}(Y_0)} \right|^2 \, dZ \right)^\frac12 \\
   \lesssim \frac1{\dist(K,\partial \Omega_0)} \sup_{Z\in K^*} \left| \frac{G^{Y_i}(Z)}{G^{Y_i}(Y_0)} - \frac{G(Z)}{G(Y_0)} \right| \longrightarrow 0 \text{ as } i \to \infty.
\end{multline*}
where $K^*:= \bigcup_{Z\in K} \overline{B(Z,\delta(Z)/2)}$, and the final convergence is by \eqref{convGYtoG}.

\medskip

\noindent {\bf Step 1b: Representation of $\partial_n G^{Y}$.}
We write $H^{Y}$ for the function
\[H^Y(X):= \int_{\Omega_0} \br{\partial_n \eta_{\delta(Y)}}(Y-Z) G(X,Z) dZ.\]
We claim that 
\begin{equation}\label{claim.drnG+H}
    \partial_n G + H^Y  \text{ is a weak solution to $-\Delta u = 0$ in $\Omega_0$.}
\end{equation}
First, $G^Y$ and $H^Y$ are smooth functions, since there both weak solution to $-\Delta u = f$ with some $f \in C^\infty_0(\Omega)$. More precisely, $-\Delta G^Y=\eta_{\delta(Y)}(Y-\cdot)$, and $-\Delta H^Y=\br{\partial_n \eta_{\delta(Y)}}(Y-\cdot)$.
Hence, $\partial_n G^Y$ and $H^Y$ are in $W^{1,2}_{loc}(\Omega_0)$. Second, for $\varphi \in C^\infty_0(\Omega)$, we have by the PDE satisfied by $ G^Y$ and $H^Y$,
\begin{multline*}
    \int_{\Omega_0} \nabla [\partial_n G^Y + H^Y] \cdot \nabla \varphi \, dZ
    = - \int_{\Omega_0} \nabla G^Y \cdot \nabla \partial_n \varphi \, dZ 
    + \int_{\Omega_0} \br{\partial_n \eta_{\delta(Y)}}(Y-Z) \varphi(Z)\, dZ \\
    =  \int_{\Omega_0} \big( - \eta_{\delta(Y)}(Y-Z) \partial_n \varphi(Z) 
    + \br{\partial_n \eta_{\delta(Y)}}(Y-Z) \varphi(Z)\big) \, dZ.
\end{multline*}
Integration by parts in $Z_n$ gives that 
\[
 -\int_{\Omega_0}\eta_{\delta(Y)}(Y-Z) \partial_n \varphi(Z)dZ
 =\int_{\om_0}\dr_{Z_n}\br{\eta_{\delta(Y)}(Y-Z)}\vp(Z)dZ=-\int_{\om_0}(\dr_n\eta_{\delta(Y)})(Y-Z)\vp(Z)dZ,
\]
which implies that $ \int_{\Omega_0} \nabla [\partial_n G^Y + H^Y]\cdot\nabla\vp=0$ as claimed.

\medskip

Since $\Omega_0$ is a Lipschitz domain, the $L^2$ Dirichlet problem is solvable for $-\Delta$ in $\om_0$ (see \cite{Dah77} or \cite{JK81}\footnote{An attentive reader might notice that both references are for bounded Lipschitz domain, but the same result holds for unbounded domain. Alternatively, since we actually only need $L^p$ estimate for some $p>1$, we can apply \cite{HKMP} for unbounded domains.}). So, by Theorem 1.22 in \cite{MPT}, the Poisson-Dirichlet and Poisson-regularity problem are also solvable in $L^2$. It implies that 
\[\|N(\nabla G^Y)\|_{L^2(\pom_0)} + \|N(H^Y)\|_{L^2(\pom_0)} < +\infty.\]
In particular, $\partial_n G^Y$ has a trace in $L^2(\pom_0)$. The $L^2$ solvability of the Dirichlet problem in $\Omega_0$ also implies that the harmonic extension $u_{\partial_n G^Y}$ of this trace in $\Omega_0$ verifies
\[\|N(u_{\partial_n G^Y})\|_{L^2(\pom_0)}\lesssim\norm{Tr(\dr_n G^Y)}_{L^2(\pom_0)} < +\infty.\]
Note that the non-tangential bounds of $\partial_n G^Y$, $H^Y$, and $u_{\partial_n G^Y}$ guarantees the almost everywhere non-tangential limit of the functions to their traces, and that 
\[\Tr(\partial_n G^Y + H^Y - u_{\partial_n G^Y}) = \Tr(H^Y) = 0.\]
Consequently, $\partial_n G^Y + H^Y - u_{\partial_n G^Y}$ is a function with non-tangential bounds and zero trace, and is a solution to $-\Delta u=0$ in $\om_0$ by \eqref{claim.drnG+H}. Uniqueness of the Dirichlet problem (see \cite[Theorem 1.4.4]{Ken94}, or \cite[Section 6]{HLMP} for unbounded domains) gives the formula: 
\[\partial_n G^Y = u_{\partial_n G^Y} - H^Y \quad \text{in }\om_0.\]

\medskip

\noindent {\bf Step 1c: $\partial_n G \geq 0$.} First, note that $\partial_n G$ is non-negative on the boundary $\partial \Omega_0$. Indeed, for $x\in \partial \Omega$, since $G^Y$ is positive in $\Omega_0$ and since $x+he_n \in \Omega_0$ for $h>0$, we have
\[\partial_n G^Y(x) = \lim_{h\to 0 \atop h>0} \frac{G^Y(x+he_n)}{h} \geq 0.\]
As a consequence, the harmonic extension $u_{\partial_n G^Y}$ of $\Tr(\partial_n G^Y)$ - which is given by the integral of $\Tr(\partial_n G^Y)$ with respect to a positive measure (the harmonic measure) - is non-negative in $\Omega_0$.

\medskip

Second, we compare $H^Y$ and $G^Y$ outside of $B_Y$. 
Observe that 
\[
H^Y(X)=-\int_{B_Y}\dr_{Z_n}\br{\eta_{\delta(Y)}(Y-Z)}G(X,Z)dZ=\int_{B_Y}\eta_{\delta(Y)}(Y-Z)\dr_{Z_n}G(X,Z)dZ
\]
by integration by parts. Therefore, 
\[|H^Y(X)|\le\int_{B_Y}\eta_{\delta(Y)}(Y-Z)|\nabla_{Z}G(X,Z)|dZ.
\]
If $B_X$ and $B_Y$ do not intersect, we apply Lemma \ref{lemNG<G/d} to $G(X,\cdot)$ in $\om_0\setminus B_X$. For $Z\in B_Y$, $\dist(Z,\dr(\om_0\setminus B_X))=\min\set{\delta(Z),\dist(Z,B_X)}$. Obviously, $\delta(Z)\ge \delta(Y)/2$. We check that we have $\dist(Z,B_X)\ge \max\set{\delta(X)/2,|Y-X|-\delta(X)/2-\delta(Y)/2}$. If $\delta(X)<\delta(Y)/4$, then $|Y-X|\ge \delta(Y)-\delta(X)$, and so  $|Y-X|-\delta(X)/2-\delta(Y)/2\ge \delta(Y)/8$. This implies that $\dist(Z,B_X)\ge \delta(Y)/8$, and thus for $Z\in B_Y$, $\dist(Z,\dr(\om_0\setminus B_X))\ge\delta(Y)/8$. So we have that
\[|H^Y(X)|\lesssim  \int_{B_Y}\eta_{\delta(Y)}(Y-Z)\frac{G(X,Z)}{\dist(Z,\dr(\om_0\setminus B_X))}dZ\lesssim  \int_{B_Y}\eta_{\delta(Y)}(Y-Z)\frac{G(X,Z)}{\delta(Y)}dZ = \frac{G^Y(X)}{\delta(Y)}.\]
It means that for any $X\in \Omega_0$ and any $c>0$, there exists $i_0:=i_0(X,c)$ such that for $i\geq i_0$
\begin{equation} \label{H<cG}
|H^{Y_i}(X)| \lesssim 2^{-i} G^{Y_i}(X) \lesssim c G^{Y_i}.
\end{equation}
For $c>0$, using successively Step 1a, Step 1b, the positivity of $u_{\partial_n G^{Y_i}}$ and \eqref{H<cG}  we have then that
\[ \frac{\partial_n G(X)}{G(X)} = \lim_{i\to \infty} \frac{\partial_n G^{Y_i}(X)}{G^{Y_i}(X)} = \lim_{i\to \infty} \frac{u_{\partial_n G^{Y_i}}(X) - H^{Y_i}(X)}{G^{Y_i}(X)} \geq -c \]
Since the bound is for all $c>0$, we conclude that 
\[ \frac{\partial_n G(X)}{G(X)} \geq 0\]
as desired.

\bigskip

 {\bf Step 2: Show that $\partial_n G(X) \geq cG(X) / \delta(X)$ for $X\in\om_0$.} From Step 1, we get that for any Lipschitz graph domain $\Omega_0$ and any Green function with pole at infinity $G$ on $\Omega_0$, we have 
 \begin{equation} \label{dnG>0}
 \partial_n G > 0.
 \end{equation}
 Indeed, $\partial_n G$ is a non-zero non-negative harmonic function in $\Omega_0$, so the Harnack inequality forces $\partial_n G$ to be positive.

 We want to prove that there exists $c>0$ (depending only on $n$ and the Lipschitz constant of $g$) such that 
  \begin{equation} \label{dnG>cG}
  \partial_n G(X) \geq cG(X) / \delta(X) \qquad \text{ for all } X\in \Omega_0.
  \end{equation}
  This claim easily yields \eqref{H1}--\eqref{H2}.

 To show \eqref{dnG>cG}, it suffices to show that there is $c>0$ depending only on $n$ and the Lipschitz constant of $g$, such that $\dr_nG(0,1)>cG(0,1)$ (recall that $0\in\pom_0$). We can assume without loss of generality that $G(0,1)=1$. By the Harnack inequality, $0<G(X)\lesssim 1$ for all  $X \in B(0,2)\cap\om_0$, with the implicit constant depending only on the dimension and the Lipschitz constant.  Moreover, since $G$ is smooth (and in particular, continuous) in $B(0,1)\cap\om_0$, there exists $c_0 >0$ (depending only on $n$ and the Lipschitz constant of $g$) such that $G(0,c_0) \le 1/2$. So we have that 
\[
\frac12\le G(0,1) - G(0,c_0) = \int_{c_0}^1 \partial_n G(0,s) ds.
\]
By the mean value theorem, there exists $s_0 \in (c_0,1)$ such that $\partial_n G(0,s_0) \ge 1/2$. Finally, since $\partial_n G$ is harmonic and nonnegative, the Harnack inequality implies that $\partial_n G(0,1) \ge c$ for some $c>0$.

\section{Solvability of the $L^p$ Dirichlet problem under $C^{1,\alpha}$ diffeomorphism of convex domains}\label{S.App}
In this appendix, we show by a simple method that if the boundary of a bounded domain $\om$ (or an unbounded Lipschitz graph domain) is locally (or globally, in the unbounded case)``a $C^{1,\alpha}$ diffeomorphism of a convex gragh'', then the $L^p$ Dirichlet problem $(D)_p$ is solvable for all $1<p<\infty$ for the Laplacian in $\om$. Particularly, this generalizes the results in convex and semiconvex domains for the Laplacian \cite{MMY10}. 
Recall that a semiconvex domain is a Lipschitz domain satisfying the exterior ball condition, which is equivalent to say that the boundary is locally the graph of the sum of a convex and a $C^{1,1}$ function; see \cite[Proposition 3.6]{MMY10}.

\begin{lemma}\label{lem-LipEst}
    Let $\Omega = \Phi(\{(x,t): t> \phi(x) \})$, where $\phi:\R^{n-1}\to \R$ is a convex function with Lipschitz constant $M$, and $\Phi:\Rn\to\Rn$ is a $C^{1,\alpha}$ diffeomorphism. Assume $\phi(0) = 0$ (i.e., $0\in \partial \Omega$). Let $L=-\diver A\nabla$ be an elliptic operator with coefficients in $C^\beta(\om)$ for some $\beta>0$. Let $u$ be a weak solution to $Lu=0$ in $\Omega \cap B_1(0)$ with $u=0$ on $\pom\cap B_1(0)$. Then
    \begin{equation}\label{est.DuInConvex}
        \| \nabla u \|_{L^\infty(\Omega\cap B_{1/2})} \le C \| u \|_{L^\infty(\Omega \cap B_1)},
    \end{equation}
    where $C>0$ depends on $n$, $M$, the ellipticity constant, $\norm{\Phi}_{C^{1,\alpha}(B_1)}$,  and $\norm{A}_{C^\beta(\om\cap B_1)}$. 
\end{lemma}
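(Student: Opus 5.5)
We first pull the problem back to the convex side. Set $U:=\Phi^{-1}(\Omega\cap B_1)$ and $v:=u\circ\Phi$. Then $v$ solves $L_\Phi v=0$ in $U$, with $A_\Phi=\det(\nabla\Phi)(\nabla\Phi)^{-1}A(\nabla\Phi)^{-T}\circ\Phi$, and $v=0$ on the flat-convex boundary portion $\{t=\phi(x)\}$. Since $\Phi\in C^{1,\alpha}$ and $A\in C^\beta$, the new coefficients are $C^{\gamma}$ with $\gamma=\min(\alpha,\beta)$, and their norms and ellipticity are controlled. The bound \eqref{est.DuInConvex} is invariant under this change, up to constants depending on $\|\Phi\|_{C^{1,\alpha}}$. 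After shrinking radii, it therefore suffices to prove a boundary Lipschitz estimate for divergence-form operators with $C^\gamma$ coefficients in the convex graph domain $D=\{t>\phi(x)\}$. We may also freeze the coefficients at a boundary point and rotate so that $A_\Phi(0)=I$.

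The key steps are as follows.

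\textbf{Step 1: barrier from convexity.} For any boundary point $x_1$, convexity gives a supporting half-space $H\supset D$ whose boundary passes through $x_1$. For the Laplacian, the half-space linear function $\ell(X)=\dist(X,\partial H)$ dominates $\delta$. Hence $|v(X)|\le C\|v\|_\infty\,\ell(X)$ near $x_1$, by the maximum principle applied with the barrier $C\|v\|_\infty(\ell+|X-x_1|^2\text{-type correction})$. This yields $|v(X)|\lesssim \|v\|_\infty\,\delta(X)$, and then the interior estimate (Lemma~\ref{lemNG<G/d}) gives $|\nabla v|\lesssim\|v\|_\infty$.

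\textbf{Step 2: variable coefficients via perturbation.} For $C^\gamma$ coefficients the linear barrier is no longer a solution, and this is the main obstacle. I would run a Campanato-type iteration. At each scale $r$ around $x_1$, compare $v$ with the solution $w_r$ of the constant-coefficient problem $-\diver A_\Phi(x_1)\nabla w_r=0$ in $D\cap B_r(x_1)$ with the same boundary data. The error satisfies $\|\nabla(v-w_r)\|_{L^2}\lesssim r^\gamma\|\nabla v\|_{L^2}$. The domain $D$ stays convex after the linear transformation normalizing $A_\Phi(x_1)$, so Step 1 applies to $w_r$. Step 1 provides the decay $\sup_{B_{\theta r}}|w_r|\le C\theta r\cdot r^{-1}\sup_{B_r}|w_r|$, that is, linear growth with a uniform constant.

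\textbf{Step 3: iteration.} Combining the comparison with the linear decay over dyadic scales, the errors $r^\gamma$ are summable. This gives $\sup_{B_r(x_1)\cap D}|v|\le C r\|v\|_{L^\infty(B_1)}$ uniformly in $x_1\in\partial D\cap B_{1/2}$ and $r<1/4$. The $L^2$ error must be converted to $L^\infty$ via boundary De Giorgi/Moser, which is legitimate since Lipschitz domains satisfy the needed capacity density condition. Finally, apply interior Schauder (or $C^{1,\gamma}$) estimates for $C^\gamma$ divergence-form operators in $B(X,\delta(X)/2)$, which give $|\nabla v(X)|\lesssim \delta(X)^{-1}\sup_{B(X,\delta/2)}|v|\lesssim\|v\|_\infty$. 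Transporting back by $\Phi$ proves \eqref{est.DuInConvex}.

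We expect the main difficulty to be Step 2, namely making the perturbation argument produce the sharp linear rate rather than $r^{1-\epsilon}$. The summability of $r^\gamma$ is what saves the iteration, and one must check that the Step 1 constant does not depend on the scale or the point. This holds because convexity and the Lipschitz constant $M$ are scale-invariant, and the normalizing linear maps are uniformly bounded by ellipticity.
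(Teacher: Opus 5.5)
Your pull-back by $\Phi$ and your last step ($|v(X)|\lesssim\|v\|_\infty\delta(X)$ followed by interior Schauder) match the paper. Steps 2--3, however, contain a genuine gap: the iteration you describe cannot give the linear rate.

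Put $m(r):=\sup_{D\cap B_r(x_1)}|v|$. Comparing with the frozen solution $w_r$ and using only the sup-norm decay from Step 1 gives at best $m(\theta r)\le C\theta\, m(r)+(\text{error})$, and $C>1$ is unavoidable. Already in a half-ball, the harmonic $w$ with data $1$ on the hemisphere and $0$ on the flat part satisfies $w>t$, because $w-t$ is harmonic with nonnegative, nonzero boundary values. Hence $\sup_{B_\theta}w>\theta$ for every $\theta\in(0,1)$.

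Iterating yields $m(\theta^k)\lesssim (C\theta)^k=\theta^{k(1-\log C/\log(1/\theta))}$, i.e.\ $r^{1-\epsilon}$ for every $\epsilon>0$ but never $r$. Summability of the $r^\gamma$ errors does not help. The loss is the multiplicative constant, which compounds at every scale because each comparison replaces $v|_{\partial B_r}$ by its supremum and forgets that $v$ is already nearly linear there. The $L^2$ Campanato version fails the same way: the iteration lemma only gives $\int_{D\cap B_\rho}|\nabla v|^2\lesssim\rho^{n-\epsilon}$.

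The usual route to the endpoint is excess decay, i.e.\ uniform $C^{1,\alpha}$ bounds for the frozen problem, and convex domains do not have them. At a planar corner of opening $\omega<\pi$ the gradient behaves like $r^{\pi/\omega-1}$, and this exponent tends to $0$ as $\omega\to\pi$. Separately, turning the energy error $\|\nabla(v-w_r)\|_{L^2}$ into an $L^\infty$ bound by De Giorgi--Moser would require $(A_\Phi-A_\Phi(x_1))\nabla v\in L^q$ with $q>n$, which is essentially what you are trying to prove.

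The missing idea, which is the paper's argument, is to use convexity only through a domain inclusion and let the flat-boundary theory handle the variable coefficients. After rotating so that $\{t=0\}$ supports the graph at the boundary point, you have $\{t>\phi\}\cap\Phi^{-1}(B_1)\subset\{t>0\}\cap\Phi^{-1}(B_1)$. Let $w$ solve the \emph{variable-coefficient} equation $L_\Phi w=0$ in the larger set, with $w=0$ on $\{t=0\}$ and $w=\|v\|_\infty$ on the rest of its boundary. The comparison principle gives $|v|\le w$. The boundary $C^{1,\gamma}$ Schauder estimate at the \emph{flat} boundary, where frozen solutions do have excess decay, gives $w\le C\|v\|_\infty t$. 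No iteration inside $D$ is needed.

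Two smaller points in your Step 1. First, adding $|X-x_1|^2$ to $\ell$ makes the barrier subharmonic, which is the wrong sign for an upper barrier; use a harmonic correction such as $|X-x_1|^2-n\ell^2$, or simply the half-ball solution. Second, $\ell\ge\delta$ does not yield $|v(X)|\lesssim\delta(X)$. You must take $x_1$ to be a nearest boundary point of $X$, so that the supporting hyperplane at $x_1$ is orthogonal to $X-x_1$ and $\ell(X)=\delta(X)$.
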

\begin{proof}
    Let $v = u\circ \Phi$. Then $v$ satisfies $L_\Phi v = -\diver(A_\Phi \nabla v) = 0$ in $\Phi^{-1}(\Omega \cap B_1)$, where
    \begin{equation*}
        A_\Phi = |\det \nabla\Phi| (\nabla\Phi)^{-T} (A\circ \Phi) (\nabla\Phi)^{-1}.
    \end{equation*}
     Clearly, $A_{\Phi}$ also satisfies the uniform ellipticity condition and $C^{\alpha_1}$ continuous (since $\Phi$ is a $C^{1,\alpha}$ diffeomorphism), with $\alpha_1 = \min \{ \alpha, \beta \} > 0$. Hence, the desired estimate \eqref{est.DuInConvex} is equivalent to
    \begin{equation}\label{est.DvInConvex}
        \| \nabla v \|_{L^\infty(\Phi^{-1}(\Omega\cap B_{1/2}))} \le C \| v \|_{L^\infty(\Phi^{-1}(\Omega \cap B_1))}.
    \end{equation}
    But now $\Phi^{-1}(\Omega \cap B_1) = \{ (x,t): t > \phi(x) \} \cap \Phi^{-1}(B_1)$. By assumption, the part $\{ (x,t): t = \phi(x) \} \cap \Phi^{-1}(B_1)$ is convex. The gradient estimate \eqref{est.DvInConvex} over convex boundaries is more or less well-known; see \cite[Chapter 14.2]{GT83} for elliptic operator in nondivergence form. We provide a proof for our elliptic operator in divergence form for the reader's convenience.
    
    Up to a translation and rotation, we can assume  that $\Phi(0) = 0$, $\phi(0)=0$, and by the convexity of $\phi$, that $\phi(x) \ge 0$ for $x\in\R^{n-1}$. 
    We claim: for $t > 0$ and $(0,t) \in \Phi^{-1}(B_{3/4}) \subset \subset \Phi^{-1}(B_1)$, we have
    \begin{equation}\label{est.v.lineardecay}
        |v(0,t)| \le Ct\| v \|_{L^\infty(\Phi^{-1}(\Omega\cap B_1))}
    \end{equation}
    This is proved by the comparison principle. We construct a barrier function in the following way. Let $M = \| v \|_{L^\infty(\Phi^{-1}(\Omega\cap B_1))}$. Let $w$ be a barrier function satisfying $L_\Phi(w) = 0$ in $\{ t > 0 \} \cap \Phi^{-1}(B_1)$, $w = 0$ on $\{ t = 0\} \cap \Phi^{-1}(B_1)$ and $w = M$ on $\{ t> 0\} \cap \partial \Phi^{-1}(B_1)$. Clearly, $0\le w \le M$ in $\{ t > 0\} \cap \Phi^{-1}(\Phi)$. The key fact we will use from the convexity of $\phi(x)$ is that $\{ t> \phi(x)\} \cap \Phi^{-1}(B_1) \subset \{ t > 0\} \cap \Phi^{-1}(B_1)$ (due to the fact $\phi(x) \ge 0$). Therefore, we have $|v| = 0 \le w$ on the convex part of the boundary $\{ t = \phi(x) \} \cap \Phi^{-1}(B_1)$ and $|v| \le M = w$ on the rest of the boundary $\{ t > \phi(x) \} \cap \partial \Phi^{-1}(B_1)$. By the comparison principle, we have $|v| \le w$ in $\{ t > \phi(x) \} \cap \Phi^{-1}(B_1)$. By the Schauder estimate of $w$ over a flat boundary $\{ t = 0 \}$, we have $|w(x,t)| \le CtM$ for any $(x,t) \in \{t > 0 \} \cap \Phi^{-1}(B_{3/4})$. This particularly implies \eqref{est.v.lineardecay}.

    Finally, by applying the previous argument for \eqref{est.v.lineardecay} to all the boundary points on $\{ t = \phi(x)\} \cap \Phi^{-1}(B_{3/4})$, we actually have $|v(X)| \le C\delta(X) M$, where $\delta(X)$ is the distance from $X$ to the boundary $\{ t = \phi(x)\}$. Hence, by the interior Schauder estimate, for any $X \in \{ t > \phi(x)\} \cap \Phi^{-1}(B_{1/2})$, we have
    \begin{equation*}
        |\nabla v(X)| \le C \delta(X)^{-1} \| v \|_{L^\infty(B(X, c\delta(X)))} \le CM.
    \end{equation*}
    This ends the proof of \eqref{est.DvInConvex}.
\end{proof}

\begin{remark}
    The previous lemma (and thus the next theorem) is also valid if $\Phi$ is a $C^{1,{\rm Dini}}$-type diffeomorphism, thanks to the well-known gradient estimate for elliptic operators with Dini continuous coefficients; see \cite{Lieb86}. However, the argument does not work if $\Phi$ is merely $C^1$, due to the classical failure of the boundedness of gradient for the elliptic operator $L_\Phi = -\diver (A_\Phi \nabla )$ with continuous coefficients. 
  \end{remark}

\begin{theorem}\label{thm-Dp-C1a} 
Let $\om$ be either a bounded Lipschitz domain, or an unbounded Lipschitz graph domain in a form of $\om=\set{(x,t)\in\R^{n-1}\times\R: t> \phi_0(x) + \phi_1(x)}$, where $\phi_0$ is a convex function and $\|\nabla \phi_0 \|_{L^\infty(\R^{n-1})} + \| \phi_1 \|_{L^\infty(\R^{n-1})} + \| \nabla \phi_1 \|_{L^\infty(\R^{n-1})} \le M$. Suppose that there exists $r_0>0$ such that for any $x_0\in \partial \Omega$, there exists a coordinate basis $\{e_1,\dots,e_n\}$, an $M$-Lipschitz convex function $g_{x_0}$ and a $C^{1,\alpha}$ diffeomporphism $\Phi_{x_0}:\, \R^n \to \R^n$ with $|\nabla\Phi_{x_0}| + | \nabla\Phi_{x_0}^{-1} | \le M$ such that 
\[\Omega \cap B(x_0,r_0) = \Phi_{x_0}\Big( \{ (x,t) \in \R^{n-1}\times\R,\, t>g_{x_0}(x)\} \Big) \cap B(x_0,r_0).\]
Then $(D)_p$ for $-\Delta$  is solvable for all $1<p<\infty$ in $\Omega$.
\end{theorem}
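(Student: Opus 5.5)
The plan is to use Lemma~\ref{lem-LipEst} to obtain a pointwise bound on the Poisson kernel, and then convert that bound into the reverse H\"older inequality of Theorem~\ref{ThDP<=>RH} for every exponent.

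I treat the bounded case first. Fix a pole $X_0$ with $\dist(X_0,\partial\Omega)\approx\diam\Omega$, and let $G=G^{X_0}$. Take $x_0\in\partial\Omega$ and $r<c\,r_0$. Rescale $B(x_0,r)$ to the unit ball. After rescaling, the local description is still $\Phi(\{t>g(x)\})$, where $g$ is convex and $M$-Lipschitz and the rescaled $\Phi$ has a $C^{1,\alpha}$ norm that is no larger, because dilations only improve the H\"older seminorm. Apply Lemma~\ref{lem-LipEst} with $L=-\Delta$ to $u=G$ on $\Omega\cap B(x_0,r)$. This gives
\[
|\nabla G(X)|\lesssim \frac1r\sup_{\Omega\cap B(x_0,r)}G \quad\text{for } X\in\Omega\cap B(x_0,r/2).
\]
By the boundary Harnack principle (Carleson estimate) on Lipschitz domains, $\sup_{\Omega\cap B(x_0,r)}G\lesssim G(A_r(x_0))$, where $A_r(x_0)$ is a corkscrew point. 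Lemma~\ref{lemG=om} (finite-pole version) gives $G(A_r)\approx r^{2-n}\omega^{X_0}(\Delta(x_0,r))$.

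Next, take $y\in\Delta(x_0,r/4)$. Since $G(y)=0$, the mean value theorem along a segment in the cone gives
\[
\kappa^{X_0}(y)\le \limsup\frac{G(X)}{\delta(X)}\lesssim \sup_{B(x_0,r/2)\cap\Omega}|\nabla G|.
\]
Combining the two displays yields
\[
\sup_{\Delta(x_0,r/4)}\kappa^{X_0}\lesssim r^{1-n}\,\omega^{X_0}(\Delta(x_0,r))\approx \fint_{\Delta(x_0,r)}\kappa^{X_0}\,d\sigma.
\]
The last comparability holds because $\kappa^{X_0}\approx d\omega/d\sigma$, which follows from the proof of Theorem~\ref{ThDP<=>RH} once $\omega\ll\sigma$ is known. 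Absolute continuity itself follows from the $L^2$ theory of Dahlberg for Lipschitz domains, or directly from the bound above together with doubling.

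Doubling of $\omega^{X_0}$ then upgrades this to the $L^\infty$ reverse H\"older inequality $\sup_{\Delta(x,r)}\kappa\lesssim\fint_{\Delta(x,r)}\kappa$ for all small $r$. This implies \eqref{defRH'} for every $p'<\infty$. Radii $r\gtrsim r_0$ are handled by covering with finitely many small balls and using doubling together with the trivial upper bound for $\kappa$ away from small scales. Theorem~\ref{ThDP<=>RH} then gives $(D)_p$ for every $p\in(1,\infty)$.

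For the unbounded graph case I follow the same argument with the Green function at infinity, using Lemma~\ref{lemG=om} and the unbounded version of Theorem~\ref{ThDP<=>RH}. One point needs care. The hypothesis gives local convex-plus-$C^{1,\alpha}$ descriptions only at scale $r_0$, and at large scales Lemma~\ref{lem-LipEst} cannot be rescaled because the $C^{1,\alpha}$ norm of a blown-down map is not controlled. At scales $r\gg r_0$ I instead use the global structure $\phi_0+\phi_1$ with $\phi_1$ bounded: at large scales the boundary is within $M$ of a convex graph. Here I would aim for the estimate $G(X)\lesssim \delta(X)\,G(A_r)/r$ for points with $\delta(X)\ge C M$, obtained from a barrier comparison with a half-space lying below the convex part, as in the proof of Lemma~\ref{lem-LipEst}. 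This would then be combined with the small-scale estimate and Harnack chains at scale $\approx M$.

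I expect this patching of small and large scales in the unbounded case to be the main obstacle. The bounded case is essentially routine once Lemma~\ref{lem-LipEst} is available.
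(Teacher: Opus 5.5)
Your proposal is correct. In the bounded case it follows the paper's argument: Lemma~\ref{lem-LipEst} below scale $r_0$, the Carleson (Bourgain) estimate $\sup_{B(x,r)\cap\Omega}G\lesssim G(A_{x,r})$, Lemma~\ref{lemG=om} to identify $G(A_{x,r})/r$ with $\omega(\Delta(x,r))/\sigma(\Delta(x,r))$, hence an $L^\infty$ reverse H\"older bound, and covering for large radii.

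The real difference is at large scales in the unbounded case. The paper imports the large-scale Lipschitz estimate of \cite[Theorem 4.2]{Z21} for ``convex $+$ bounded'' graphs. You instead prove the needed decay directly by a barrier argument.
\begin{itemize}
\item If $\ell$ is a supporting affine function of $\phi_0$ at $y'$, then $\Omega\subset H:=\{t>\ell(x)-M\}$ and $\dist(y,\partial H)\le 2M$.
\item Compare $G$ on $\Omega\cap B(y,r)$ with the harmonic function in $H\cap B(y,r)$ that vanishes on $\partial H$ and equals $\sup_{\Omega\cap B(y,r)}G$ on the spherical part.
\item This gives $G(X)\lesssim(\delta(X)+M)\,G(A_{y,r})/r$ for $X$ in the cone at $y$ with $|X-y|<r/4$ and $r\gg M$.
\end{itemize}
This works. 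It is more elementary and self-contained, being exactly the mechanism of Lemma~\ref{lem-LipEst} with only supporting hyperplanes and $|\phi_1|\le M$. It yields only the decay $G(A_{y,s})/s\lesssim G(A_{y,r})/r$ for $M\lesssim s<r$, not a full large-scale gradient bound, but that is all the reverse H\"older inequality needs.

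The patching you flag as the main obstacle is harmless. For $r_0\le s\le CM$, the Carleson estimate alone gives $G(A_{y,r_0})/r_0\lesssim (CM/r_0)\,G(A_{y,s})/s$, with a constant depending only on $M/r_0$. No Harnack-chain argument at scale $M$ is needed.

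Two small points.
\begin{itemize}
\item To bound $\kappa(y)$ by $\sup|\nabla G|$, integrate along the vertical segment from the boundary point below $X$, as the paper does. The segment $[y,X]$ for a wide cone need not lie in $\Omega$.
\item Like the paper, you implicitly need the $C^{1,\alpha}$ norms of the maps $\Phi_{x_0}$ to be bounded uniformly in $x_0$. The statement only bounds $|\nabla\Phi_{x_0}|+|\nabla\Phi_{x_0}^{-1}|$.
\end{itemize}
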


\begin{proof}
\underline{Case 1:} when $\om$ is an unbounded Lipschitz graph domain. It suffices to prove that for any $q>1$, there exists a constant $C>0$ such that 
\begin{equation}\label{eq.ubddRH}
    \br{\frac{1}{\sigma(\Delta(X,r))}\int_{\Delta(X,r)}k(y)^qd\sigma(y)}^{1/q}\le  \frac{C}{\sigma(\Delta(X,r))}\int_{\Delta(X,r)}k(y)d\sigma(y)=C\frac{\omega(\Delta(X,r))}{\sigma(X,r)}
\end{equation}
for all $X\in\pom$, $r>0$, where $k(y)=\frac{d\omega}{d\sigma}(y)$ and $\omega$ is the harmonic measure with pole at infinity (see Lemma~\ref{lem GrEm_infty}).  

Since $\phi$ is Lipschitz with constant at most $M$, for any $y=(y',\phi(y'))\in \Delta(X,r)$, we have 
\[
k(y)=\lim_{s\to0}\frac{\omega(\Delta(y,s))}{\sigma(\Delta(y,s))}\lesssim \limsup_{s\to 0}\frac{G(y',s+\phi(y'))}{s}
\]
by Lemma~\ref{lemG=om}, where $G$ is the Green function for $-\Delta$ with pole at infinity. Apply Lemma \ref{lem-LipEst} to $u=G$ in $B(X,r_0)\cap\om$, we get that (for $s>0$ sufficiently small)
\begin{equation}\label{eq.fullscaleLip}
\frac{G(y',s+\phi(y'))}{s}\le \frac{C\norm{G}_{L^\infty(B(X,r)\cap\om)}}{r},
\end{equation}
for any $0<s<r\le r_0$, where the constant $C$ is independent of $X$ and $r$. To extend the same estimate for $r>r_0$, we need a large-scale Lipschitz estimate, which has been established in  \cite[Theorem 4.2]{Z21} if the boundary graph is a ``convex + bounded'' function (i.e., $\partial \Omega = \{ t = \phi_0(x) + \phi_1(x) \}$), and in our case can be formed as
\[
\norm{G}_{L^\infty(B(X,r_0)\cap\om)} \le \frac{C \norm{G}_{L^\infty(B(X,r)\cap\om)} }{r}
\]
for all $r>r_0$. Combing the local estimate with the large-scale Lipschitz estimate, we obtain \eqref{eq.fullscaleLip} for all $r > s > 0$.
By the Bourgain estimate (see e.g. \cite[Lemma 4.4]{DK82}), one has $\norm{G}_{L^\infty(B(X,2r)\cap\om)}\le G(A_{X,r})$, where $A_{X,r}$ is a corkscrew point relative to $X$ and $r$, that is, $A_{X,r}\in B(X,Mr)\cap\om$ satisfies $\dist(A_{X,r},\pom)\approx |A_{X,r}-X|\approx r$. By Lemma~\ref{lemG=om} again, we have that $\frac{G(A_{X,r})}{r}\approx \frac{\omega(\Delta(X,r))}{\sigma(X,r)}$. Putting everything together, one has 
\begin{equation*}
     \br{\frac{1}{\sigma(\Delta(X,r))}\int_{\Delta(X,r)}k(y)^qd\sigma(y)}^{1/q}\le \frac{C\norm{G}_{L^\infty(B(X,2r)\cap\om)}}{r}\le C\frac{\omega(\Delta(X,r))}{\sigma(X,r)}
\end{equation*}
as desired.

\underline{Case 2:} when $\om$ is bounded. It suffices to show that for any $q>1$, there exists a constant $C>0$ such that 
\begin{equation}\label{eq.bddRH}
    \br{\frac{1}{\sigma(\Delta(X,r))}\int_{\Delta(X,r)}k^{X_0}(y)^qd\sigma(y)}^{1/q}\le  \frac{C}{\sigma(\Delta(X,r))}\int_{\Delta(X,r)}k^{X_0}(y)d\sigma(y)=C\frac{\omega^{X_0}(\Delta(X,r))}{\sigma(X,r)}
\end{equation}
for all $X\in\pom$, $0<r<\diam(\om)$, where $X_0\in\om$ is a point satisfying $\dist(X_0,\pom)\approx\diam(\om)$, $k^{X_0}(y)=\frac{d\omega^{X_0}}{d\sigma}(y)$, and $\omega^{X_0}$ is the harmonic measure with pole $X_0$. By definition, there is $0<r_0<\diam(\om)$ such that for any $X\in\pom$, \[\om\cap B(X,r_0)=\Phi_{X}(\set{(x,t)\in B(X,r_0): t>g_X(x)} )\] where $g_X$ is convex with Lipschitz constant at most $M$. By a covering argument, it suffices to show \eqref{eq.bddRH} for $0<r<\min\set{r_0/4,\dist(X_0,\pom)/4}$. Since for any $X\in\pom$ and $0<r<\min\set{r_0/4,\dist(X_0,\pom)/4}$, $G(\cdot, X_0)$ is a solution to $-\Delta u=0$ in $B(X,2r)\cap\om$ with $G(\cdot, X_0)=0$ on $\pom$, one can apply \eqref{lem-LipEst} to $G(\cdot, X_0)$ in $B(X,r)\cap\om$ and the rest of the argument goes as in Case 1.  
\end{proof}

\begin{remark}
    In the case of an unbounded Lipschitz graph domain considered in Theorem \ref{thm-Dp-C1a}, if $\phi_1 = 0$, then $\Omega$ is convex. Then $\Phi_{x_0}$ can be taken as identity transform and $g_{x_0}(x) = \phi_0(x)$ for any $x_0\in \partial \Omega$.
\end{remark}

\begin{theorem}
    Under the same assumptions as Theorem \eqref{thm-Dp-C1a}, the $L^p$ Regularity problem $(R)_p$ for the Laplace operator is solvable  in $\om$ for any $1<p<\infty$.
\end{theorem}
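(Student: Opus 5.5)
The plan is to derive this from Theorem~\ref{thm-Dp-C1a} through Shen's duality between the Dirichlet and Regularity problems \cite{Shen07}, exactly as was done after Corollary~\ref{MainCor}. That result says: on a Lipschitz domain, $(R)_p$ for $-\Delta$ holds as soon as two things are true. First, $(D)_{p'}$ must hold for the adjoint operator; since the Laplacian is self-adjoint, this is just $(D)_{p'}$ for $-\Delta$. Second, $(R)_q$ must hold for some $q>1$.

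\textbf{Step 1: Lipschitz character of $\Omega$.} In the bounded case this is an assumption. In the unbounded case, $\partial\Omega$ is the graph of $\phi_0+\phi_1$, which is Lipschitz with constant at most $2M$. So Shen's framework applies in both cases. For the unbounded graph domain, we use the graph-domain version of the duality; it is proved by the same localization and real-variable argument.

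\textbf{Step 2: the two hypotheses.} Fix $p\in(1,\infty)$, so that $p'\in(1,\infty)$. Theorem~\ref{thm-Dp-C1a} gives $(D)_{p'}$ for $-\Delta$ in $\Omega$. For the second hypothesis, $(R)_q$ holds for some $q>1$ in any Lipschitz domain: this follows from \cite{Ver84, DK87}, which give $(R)_q$ for $q\in(1,2+\epsilon)$.

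\textbf{Step 3: conclusion.} Combining Steps~1 and~2 with \cite{Shen07} yields $(R)_p$. Because $p$ was arbitrary, the full range $1<p<\infty$ follows.

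\textbf{Main obstacle.} The only delicate point is the unbounded setting. One must check that Shen's self-improvement argument, and the existence of $(R)_q$ for small $q$, carry over to unbounded Lipschitz graph domains. I expect this to be routine: the arguments are local at each scale and are uniform under dilations of the Lipschitz character. If a direct reference is lacking, I would localize by truncating to large bounded Lipschitz subdomains with uniform character. Then I would pass to the limit using the uniformity of the constants and the pole-at-infinity normalization of Lemma~\ref{lem GrEm_infty}.
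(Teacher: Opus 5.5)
Your proposal is correct and is essentially the paper's argument. The paper writes no separate proof for this theorem; as explained in the introduction for the analogous corollary, it relies on Shen's duality \cite{Shen07}, combined with $(D)_{p'}$ from Theorem~\ref{thm-Dp-C1a} and $(R)_q$ for some $q>1$ from \cite{Ver84, DK87}, and your point that the unbounded graph case needs an extension of Shen's bounded-domain result is something the paper leaves implicit.
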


\bibliographystyle{alpha}
\bibliography{reference}

\end{document}